\theoremstyle{plain}
\newtheorem{dfn}[subsection]{Definition}
\newtheorem{thm}[subsection]{Theorem}
\newtheorem{prp}[subsection]{Proposition}
\newtheorem{cor}[subsection]{Corollary}
\newtheorem{lma}[subsection]{Lemma}
\theoremstyle{remark}
\newtheorem{rmk}[subsection]{Remark}
\newtheorem{stit}[subsection]{}
\def\Vv{\mathcal{V}}
\def\Cat{\mathrm{Cat}}
\def\VCat{\Vv\mathrm{-}\Cat}
\def\V'Cat{\Vv'\mathrm{-}\Cat}
\def\VGrph{\Vv\mathrm{-Grph}}
\def\cat{\mathrm{cat}}
\def\Mon{\mathrm{Mon}}
\def\Seg{\mathrm{Seg}}
\def\Iso{\mathbb{I}}
\def\JJ{\mathbb{J}}
\def\wIso{\Iso_f}
\def\ito{\rightarrowtail}
\def\eqv{\overset\sim\lrto}
\def\HH{\mathbb{H}}
\def\AA{\mathbb{A}}
\def\BB{\mathbb{B}}
\def\KK{\mathbb{K}}
\def\LL{\mathbb{L}}
\def\Alg{\mathrm{Alg}}
\def\Mod{\mathrm{Mod}}
\def\lra{\leftrightarrows}
\def\Ob{\mathrm{Ob}}
\def\rto{\longrightarrow}
\def\lto{\longleftarrow}
\def\Pp{\mathcal{P}}
\def\Gg{\mathcal{G}}
\def\Ff{\mathcal{F}}
\def\Ho{\mathrm{Ho}}
\def\lrto{\longrightarrow}
\def\op{\mathrm{op}}
\def\GG{\mathbb{G}}
\def\CC{\mathbb{C}}
\def\onto{\twoheadrightarrow}
\def\eps{\epsilon}
\begin{document}
\title{On the homotopy theory of enriched categories}

\author{Clemens Berger and Ieke Moerdijk}

\date{January 4, 2013}

\subjclass{Primary 55U35; Secondary 18D20}
\keywords{Monoidal model category, enriched category}

\begin{abstract}We give sufficient conditions for the existence of a Quillen model structure on small categories enriched in a given monoidal model category. This yields a unified treatment for the known model structures on simplicial, topological, dg- and spectral categories. Our proof is mainly based on a fundamental property of cofibrant enriched categories on two objects, stated below as the Interval Cofibrancy Theorem.\end{abstract}

\maketitle

\begin{flushright}\emph{Dedicated to the memory of Dan Quillen}\end{flushright}

\section*{Introduction}

Most categories arising naturally in mathematics are enriched in a symmetric monoidal category with more structure than the category of sets. In those cases where the enriching category comes equipped with an appropriate notion of homotopy, it is common to reformulate classical concepts of category theory in a homotopically meaningful way. From this point of view, the relevant notion of equivalence between enriched categories is that of a \emph{Dwyer-Kan equivalence} \cite{DK}, which was originally defined for categories enriched in simplicial sets, often just called simplicial categories. A map of simplicial categories is a Dwyer-Kan equivalence if it induces a weak homotopy equivalence on the simplicial hom-sets while on objects it is surjective ``up to homotopy equivalence''. In general, Dwyer-Kan equivalences do not have any kind of weak inverse, but they induce an equivalence of categories once the simplicial hom-sets are replaced with their sets of path-components. A similar notion of Dwyer-Kan equivalence exists for categories enriched in compactly generated spaces, in chain complexes or in symmetric spectra, to name only a few.

The theory of Quillen model categories \cite{Hi,Ho} provides a powerful framework to treat these examples in a systematic way. For instance, Bergner \cite{Be1} shows that the category of simplicial categories carries a Quillen model structure in which the weak equivalences are the Dwyer-Kan equivalences. Tabuada \cite{Ta,Ta2} proves a similar result for $dg$-categories as well as for categories enriched in symmetric spectra. These and other examples naturally lead to the question under which conditions a model structure on a symmetric monoidal category $\Vv$ induces a model structure on the category $\VCat$ of small categories enriched in $\Vv$.

Lurie \cite{Lu} proves a general result in this direction, which applies to categories $\Vv$ in which monomorphisms are cofibrations and some other conditions are satisfied (see Theorem \ref{Lurie} below). Such categories are in particular left proper. The aim of the present text is to prove an analogous result for symmetric monoidal categories which satisfy conditions complementary to Lurie's; in particular, they are supposed to be right proper. The reader will find a precise statement in Theorem \ref{canonical} below.

In those examples where both Lurie's and our conditions are satisfied, we show that the two model structures agree. In fact, most of the known examples of a model structure on $\VCat$ have a class of trivial fibrations and a class of fibrant objects which are directly definable in terms of the corresponding classes in $\Vv$. These two classes completely determine the model structure on $\VCat$, and we refer to model structures of this kind as \emph{canonical}.  The class of weak equivalences of the canonical model structure is thus uniquely determined, though not given in explicit terms. We prove that under our assumptions on $\Vv$, the weak equivalences of the canonical model structure are precisely the Dwyer-Kan equivalences. We actually deduce this result from the general fact that homotopy equivalences in $\Vv$-categories are \emph{coherent} whenever $\Vv$ satisfies our conditions. In the case of topologically enriched categories this is due to Boardman and Vogt \cite[Lemma 4.16]{BV}. In particular, Dwyer-Kan equivalences are now ``surjective up to coherent homotopy equivalence'', a property needed to characterise the fibrations of the canonical model structure by a right lifting property with respect to an explicit \emph{generating set} of trivial cofibrations. This generating set uses in an essential way the concept of \emph{$\Vv$-interval}, which is a special kind of $\Vv$-category on two objects. Much of the technical material in this article goes into the study of these $\Vv$-intervals.

The article is subdivided into three sections: Section \ref{s1} contains precise statements of our main results after a discussion of the necessary model-theoretical background; Section \ref{s2} proves the existence of a canonical model structure on $\VCat$ under certain conditions on the base category $\Vv$; Section \ref{s3} establishes the cofibrancy properties of $\Vv$-intervals needed for the existence of the canonical model structure.

\section{Definitions and main results}\label{s1}Let $\Vv$ be a cofibrantly generated monoidal model category (see \cite{Hi,Ho}). Structured objects in $\Vv$, such as monoids, modules for a fixed monoid, etc., often carry a Quillen model structure, which is \emph{transferred} from $\Vv$ in the sense that the fibrations and weak equivalences between these structured objects are detected by a forgetful functor to $\Vv$ (or a family of such in the multi-sorted case). These structured objects in $\Vv$ can in most cases be defined as algebras over a suitable non-symmetric coloured operad in \emph{sets}. This motivates the following definition:

\begin{dfn}\label{adequate}A monoidal model category $\Vv$ is called \emph{adequate} if\begin{itemize}\item the monoidal model structure is compactly generated;\item for any non-symmetric coloured operad $\,\Pp$ in sets, the category of $\,\Pp$-algebras in $\Vv$ carries a transferred model structure.\end{itemize}\end{dfn}

\noindent Mild conditions on $\Vv$ imply adequacy. To give a precise definition of our concept of compact generation, it is best to introduce the following terminology.

A class of maps in $\Vv$ is \emph{monoidally saturated} if it is closed in $\Vv$ under \emph{cobase change}, \emph{transfinite composition}, \emph{retract}, and under \emph{tensoring with arbitrary objects}. The \emph{monoidal saturation} of a class of maps $K$ is the least monoidally saturated class containing $K$. For brevity, let us call \emph{$\otimes$-cofibration} any map in the monoidal saturation of the class of cofibrations, and \emph{$\otimes$-small} (resp. \emph{$\otimes$-finite}) any object which is small (resp. finite) with respect to $\otimes$-cofibrations. The class of weak equivalences is \emph{$\otimes$-perfect} if it is closed under filtered colimits along $\otimes$-cofibrations.

\begin{dfn}\label{basics}A cofibrantly generated monoidal model category is \emph{compactly generated} if every object is $\otimes$-small, and the class of weak equivalences is $\otimes$-perfect.\end{dfn}

\noindent Any \emph{combinatorial} monoidal model category with a \emph{perfect} (i.e. filtered colimit closed) class of weak equivalences is compactly generated. Our definition of compact generation was chosen so as to include also the monoidal model category of compactly generated topological spaces whose objects are not small, but only $\otimes$-small, and whose class of weak equivalences is not perfect, but only $\otimes$-perfect.\footnote{A topological space is compactly generated (resp. weakly Hausdorff) if its compactly closed \cite[2.4.21(2)]{Ho} subsets are closed (resp. if its diagonal is compactly closed). The $\otimes$-perfectness of the class of weak equivalences holds for the monoidal model category of compactly generated spaces as well as for the monoidal model category of compactly generated weak Hausdorff spaces.

In the second (more familiar) case one uses that $\otimes$-cofibrations are closed $T_1$-inclusions and that compact spaces are finite with respect to closed $T_1$-inclusions, cf. Hovey \cite[2.4.1--5]{Ho}. In the first (more general) case one uses that $\otimes$-cofibrations are closed subspace inclusions $X\to Y$ with the additional property that each $y\in Y\backslash X$ belongs to a closed subset of $Y$ not intersecting $X$. Compact spaces are finite even with respect to the latter class, cf. Dugger-Isaksen \cite[A.3]{DI}.} In general, by Hovey's argument \cite[7.4.2]{Ho}, the existence of a generating set of cofibrations with \emph{finite} (resp. \emph{$\otimes$-finite}) \emph{domain and codomain} implies the perfectness (resp. $\otimes$-perfectness) of the class of weak equivalences. For us, the following corollary of $\otimes$-perfectness will play an important role (\emph{cf.} Section \ref{proof}):

\begin{lma}\label{transfinite}In a compactly generated monoidal model category the class of those weak equivalences which are $\otimes$-cofibrations is closed under transfinite composition.\end{lma}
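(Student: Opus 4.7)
The plan is to proceed by transfinite induction along the given composite. Let
\[X_0\rto X_1\rto\cdots\rto X_\alpha\rto\cdots\quad(\alpha<\lambda)\]
be a transfinite sequence whose successor maps $X_\alpha\to X_{\alpha+1}$ are weak equivalences that are also $\otimes$-cofibrations, and whose values at limit ordinals are the expected colimits. Write $X_\lambda=\mathrm{colim}_{\alpha<\lambda}X_\alpha$ and let $\phi_\alpha:X_0\to X_\alpha$ be the transfinite composite up to stage $\alpha$. I would prove by induction on $\alpha\leq\lambda$ that $\phi_\alpha$ is both a weak equivalence and a $\otimes$-cofibration.

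The $\otimes$-cofibration half is immediate from the definition: the class of $\otimes$-cofibrations is monoidally saturated and in particular closed under composition and transfinite composition, so if the successors are $\otimes$-cofibrations then so are all the $\phi_\alpha$. For the weak equivalence half, the base case $\alpha=0$ is trivial and the successor case is the two-out-of-three axiom applied to the composite $X_0\to X_\alpha\to X_{\alpha+1}$. The interesting step is a limit ordinal $\alpha$.

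At a limit ordinal I would apply $\otimes$-perfectness to the ``horizontal'' map of $\alpha$-indexed filtered diagrams
\[\phi_{\bullet}:(X_0=X_0=\cdots)\lrto(X_0\to X_1\to\cdots\to X_\beta\to\cdots)_{\beta<\alpha}.\]
The bottom diagram is constant, so its transition maps are identities, hence $\otimes$-cofibrations; the transition maps of the top diagram are, by the inductive hypothesis applied at successor stages below $\alpha$, composites of the given maps and therefore $\otimes$-cofibrations as well. Each component $\phi_\beta$ is a weak equivalence by induction. Taking colimits yields the map $\phi_\alpha:X_0\to X_\alpha$, which by $\otimes$-perfectness is a weak equivalence. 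This closes the induction and, at $\alpha=\lambda$, produces the desired conclusion.

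The only real subtlety is verifying the hypotheses of $\otimes$-perfectness at the limit step, namely that the transition maps of the target diagram are $\otimes$-cofibrations; this is precisely where the assumption that each successor map is a $\otimes$-cofibration is used (and not merely a cofibration), which is why the statement restricts attention to weak equivalences that are themselves $\otimes$-cofibrations. Everything else is bookkeeping about monoidal saturation and the two-out-of-three axiom.
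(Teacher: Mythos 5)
Your argument is correct and is exactly the intended one: the paper states this lemma as an immediate corollary of $\otimes$-perfectness without writing out a proof, and your transfinite induction (composition closure at successors, $\otimes$-perfectness applied to the map from the constant diagram at $X_0$ into the filtered diagram $(X_\beta)_{\beta<\alpha}$ at limits, using that the transition maps are $\otimes$-cofibrations by monoidal saturation) is the standard unpacking of that remark. No gaps.
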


\begin{prp}\label{adequate2}A compactly generated monoidal model category is \emph{adequate} if either of the following two conditions is satisfied:

\begin{enumerate}\item[(i)]$\Vv$ admits a monoidal fibrant replacement functor and contains a comonoidal interval object, cf. \cite{BM2};
\item[(ii)]$\Vv$ satisfies the monoid axiom of Schwede-Shipley, cf. Muro \cite{Mu}.\end{enumerate}\end{prp}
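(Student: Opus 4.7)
The plan is to apply the classical transfer theorem (Kan's lifting lemma) to the free--forgetful adjunction
$$F : \Vv^{\Ob(\Pp)} \rightleftarrows \Alg_\Pp(\Vv) : U,$$
where $\Vv^{\Ob(\Pp)}$ is equipped with the componentwise model structure (which remains compactly generated and monoidal). This reduces the task to two verifications: (a) the domains of the generating (trivial) cofibrations of $\Vv^{\Ob(\Pp)}$ should remain small with respect to the proposed generating (trivial) cofibrations of $\Alg_\Pp(\Vv)$; and (b) every transfinite composition of pushouts of $F$-images of generating trivial cofibrations must be sent by $U$ to a weak equivalence in $\Vv^{\Ob(\Pp)}$.

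For (a), one uses the standard filtration of a free $\Pp$-algebra extension. Since $\Pp$ is non-symmetric and indexed by \emph{sets}, no symmetric quotients appear, and each filtration stage is exhibited as a pushout of a $\otimes$-cofibration between tensor expressions. Because $\Vv$ is compactly generated, every object is $\otimes$-small, and smallness is inherited by the forgetful image of such relative cell complexes in $\Alg_\Pp(\Vv)$.

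For (b), the two hypotheses offer complementary routes. Under hypothesis (ii), one invokes Muro \cite{Mu}, who extends the Schwede--Shipley monoid-axiom argument from monoids to non-symmetric coloured operads in $\Vv$: the monoid axiom together with $\otimes$-perfectness (Lemma \ref{transfinite}) ensures that transfinite compositions of pushouts of $F$-images of generating trivial cofibrations lie in the weak equivalences. Under hypothesis (i), one instead applies the path-object argument of Berger--Moerdijk \cite{BM2}: the comonoidal interval $H$ together with a monoidal fibrant replacement $R$ supplies functorial path objects $[H, R(-)]$ on $\Pp$-algebras (whose existence, by comonoidality of $H$ and monoidality of $R$, is precisely what makes the construction internal to $\Alg_\Pp(\Vv)$), and acyclicity of the generating trivial cofibrations then follows by the classical path-object argument.

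The main obstacle is step (b); the present proposition is essentially an accounting statement that each of (i) and (ii) delivers this step uniformly for \emph{every} non-symmetric coloured operad in sets, so that both classes of hypotheses yield adequacy in the sense of Definition \ref{adequate}. The proof accordingly consists of invoking the cited results of \cite{BM2} and \cite{Mu} after verifying that compact generation supplies the smallness needed to run them.
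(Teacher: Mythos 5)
Your proposal is correct and follows essentially the same route as the paper, which offers no proof of this proposition beyond the citations to \cite{BM2} and \cite{Mu}: the transfer to $\Pp$-algebras for every non-symmetric coloured set-operad is exactly what those references establish, via the path-object argument under hypothesis (i) and the monoid-axiom argument under hypothesis (ii). Your additional observation that compact generation ($\otimes$-smallness of all objects together with $\otimes$-perfectness of the weak equivalences, Lemma \ref{transfinite}) supplies the smallness and transfinite-composition inputs needed to run those arguments is precisely how the cited results combine to yield adequacy in the sense of Definition \ref{adequate}.
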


\noindent Recall that the \emph{monoid axiom} of Schwede-Shipley \cite{SS} requires the monoidal saturation of the class of trivial cofibrations to stay within the class of weak equivalences. If all objects of $\Vv$ are cofibrant, the monoid axiom is a consequence of the pushout-product axiom. In a compactly generated monoidal model category the monoid axiom can be rephrased in simpler terms (since the transfinite composition part has already been taken care of by Lemma \ref{transfinite}), namely \emph{tensoring a trivial cofibration with an arbitrary object yields a couniversal weak equivalence}, cf. \cite{BB}. Examples of adequate monoidal model categories include the category of simplicial sets equipped either with Quillen's or Joyal's model structure, the category of $dg$-modules equipped with the projective model structure, and the category of symmetric spectra with the levelwise or stable projective model structure. An example of an adequate, but non-combinatorial monoidal model category is the category of compactly generated topological spaces where both criteria \ref{adequate2}(i) and (ii) apply.\vspace{1ex}

For any set $S$, we denote by $\VCat_S$ the following category: the objects of $\VCat_S$ are $\Vv$-enriched categories with object-set $S$, and the morphisms of $\VCat_S$ are $\Vv$-functors which are the identity on objects. The following proposition was shown in \cite{BM2} (resp. \cite{SS2,Mu}) under the first (resp. second) hypothesis of the preceding proposition. Several other authors proved it for specific choices of $\Vv$. It is an obvious consequence of the definition of adequacy since $\VCat_S$ is the category of algebras for a non-symmetric $S\times S$-coloured set-operad, cf. \cite{BM2} and Section \ref{examples}e.

\begin{prp}\label{VCatS}For any adequate monoidal model category $\,\Vv$ and any set $S$, the category $\VCat_S$ admits a transferred model structure. This model structure is right (resp. left) proper if $\,\Vv$ is right proper (resp. all objects of $\,\Vv$ are cofibrant).\end{prp}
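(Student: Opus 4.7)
The plan is to exhibit $\VCat_S$ as the category of algebras in $\Vv$ for a non-symmetric $S\times S$-coloured set-operad, whereupon the existence of the transferred model structure is a direct invocation of the second clause of Definition \ref{adequate}.

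Concretely, I would define a non-symmetric operad $\Pp_S$ in $\Sets$ with colour set $S\times S$ whose set of operations with input profile $((s_0,s_1),(s_1,s_2),\ldots,(s_{n-1},s_n))$ and output colour $(s_0,s_n)$ is a singleton for every $n\geq 0$, all other operation sets being empty; operadic composition is then forced. An algebra for $\Pp_S$ in $\Vv$ amounts to hom-objects $\Cc(s,t)\in\Vv$ equipped with composition (from the arity-$2$ operations) and units (from the arity-$0$ operations at $(s,s)$), with the operad axioms translating precisely to associativity and unitality; morphisms of $\Pp_S$-algebras are the identity-on-objects $\Vv$-functors. Adequacy thus delivers a transferred model structure on $\VCat_S=\Alg_{\Pp_S}(\Vv)$ in which $F\colon\Cc\to\DD$ is a fibration (resp.\ weak equivalence) iff each $F_{s,t}\colon\Cc(s,t)\to\DD(s,t)$ is so in $\Vv$.

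For right properness, the forgetful functor $U\colon\VCat_S\to\Vv^{S\times S}$ is monadic and therefore creates limits, so pullbacks in $\VCat_S$ are computed componentwise in $\Vv^{S\times S}$. Since $U$ also creates weak equivalences and fibrations, right properness of $\Vv$ passes componentwise to $\Vv^{S\times S}$ and transfers verbatim to $\VCat_S$.

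Left properness is the more delicate half, because pushouts in $\VCat_S$ are \emph{not} computed componentwise: pushing out along a free composition generator produces new composable words in the hom-objects. The plan is to analyse such a pushout via the standard small-object filtration, writing each stage as a pushout in $\Vv^{S\times S}$ whose attaching map is built from tensor products of generating cofibrations of $\Vv$ with hom-objects of the categories already constructed. When every object of $\Vv$ is cofibrant these tensor products all have cofibrant domains, so componentwise each stage is a pushout along a cofibration in $\Vv$ and therefore preserves weak equivalences; Lemma \ref{transfinite} then propagates the weak equivalence through the transfinite composition defining the limit stage. The main technical obstacle is precisely this componentwise bookkeeping of the filtration, but it is a standard piece of operadic machinery.
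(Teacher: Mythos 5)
Your proposal follows essentially the same route as the paper: the paper likewise observes that $\VCat_S$ is the category of algebras for a non-symmetric $S\times S$-coloured set-operad (cf.\ Section \ref{examples}e) and deduces the transferred model structure directly from the definition of adequacy, deferring the details and the properness statements to \cite{BM2,SS2,Mu}. Your explicit description of the operad and your sketches of right properness (limits created componentwise) and left properness (filtration of free extensions, all attaching maps cofibrations when every object of $\Vv$ is cofibrant) are the standard arguments carried out in those references, so the proposal is correct.
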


For any set-mapping $f:S\to T$, there is a Quillen pair$$f_!:\VCat_S\lrto\VCat_T:f^*$$the right adjoint of which is defined by $(f^*\BB)(x,y)=\BB(fx,fy)$ for $x,y\in S$.\vspace{1ex}

In this paper, we will address the problem when a suitable transferred model structure exists on the category $\VCat$ of \emph{all} small $\Vv$-enriched categories, obtained by letting $S$ vary over arbitrary (small) sets. In fact, the known examples suggest a more precise way of formulating this problem based on the following definitions. Recall that for any model structure the \emph{trivial} fibrations are the maps which are simultaneously fibrations and weak equivalences.

\begin{dfn}A $\Vv$-functor $f:\AA\to\BB$ between $\Vv$-categories is called a \emph{local weak equivalence} (resp. \emph{local fibration}) if for any objects $x,y\in\Ob(\AA)$, the induced map $$\AA(x,y)\rto\BB(fx,fy)$$ is a weak equivalence (resp. fibration) in $\Vv$. A $\Vv$-category is called \emph{locally fibrant} if the $\Vv$-functor to the terminal $\,\Vv$-category is a local fibration.

A model structure on $\VCat$ is called \emph{canonical} if its fibrant objects are the locally fibrant $\Vv$-categories and its trivial fibrations are the local trivial fibrations which are surjective on objects.\end{dfn}

\noindent Recall that a Quillen model structure is completely determined by its classes of trivial fibrations and of fibrant objects. Therefore, a canonical model structure on $\VCat$ is \emph{unique} when it exists, and hence we can speak of \emph{the} canonical model structure on $\VCat$. Our main problem can now be reformulated as follows: \begin{center}\emph{For which adequate monoidal model categories $\Vv$ does\\ the canonical model structure on $\VCat$ exist ?}\end{center}

\begin{rmk}\label{cofibration0}The \emph{cofibrations} of the canonical model structure can be characterised as those $\Vv$-functors $f:\AA\to\BB$ for which the set-mapping $f:\Ob(\AA)\to\Ob(\BB)$ on objects is \emph{injective}, and the induced $\Vv$-functor $f_!\AA\to\BB$ with fixed object set $\Ob(\BB)$ is a cofibration in $\VCat_{\Ob(\BB)}$. In particular, the inclusion $\VCat_S\to\VCat$ preserves cofibrations for any set $S$.\end{rmk}

\begin{stit}\label{known}The canonical model structure is known to exist in the following cases:\vspace{1ex}

(i) If $\Vv$ is the category of simplicial sets, then $\VCat$ is usually referred to as the category of simplicial categories. Bergner \cite{Be1} has shown that if $\Vv$ is equipped with the classical Quillen model structure, the canonical model structure on $\VCat$ exists. She gives explicit descriptions of the class of weak equivalences (the Dwyer-Kan equivalences \cite{DK}) and of generating sets of cofibrations and trivial cofibrations.

(ii) If $\Vv$ is the category of compactly generated topological spaces, then $\VCat$ is the category of topological categories. The existence of the canonical model structure on $\VCat$ can be proved by the same methods as in the previous example.

(iii) If $\Vv$ is the category of sets, equipped with the Quillen model structure in which the weak equivalences are the isomorphisms, then $\VCat$ is the category of small categories, and the canonical model structure is the one known as the naive, or folk model structure, see for instance Joyal-Tierney \cite{JT} or Rezk \cite{Re0}. The fibrations of this model structure are known as the so-called \emph{isofibrations}.

(iv) If $\Vv$ is the category of small categories, then $\VCat$ is the category of small $2$-categories. Lack \cite{La} has shown that if $\Vv$ is equipped with the model structure of (iii), then the canonical model structure on $\VCat$ exists. In fact, it is a monoidal model category under the Gray tensor product.

(v) Let $\Vv$ be the category of small $2$-categories with the Gray tensor product. In this case, $\Vv$-categories are a special kind of $3$-categories often referred to as Gray-categories (or semi-strict $3$-categories). Lack showed in \cite{La2} that again, if $\Vv$ is equipped with the model structure of (iv), then the canonical model structure on $\VCat$ exists. A suitable ``higher'' Gray tensor product on Gray-categories (which would allow one further iteration) is however not known.

(vi) Let $\Vv$ be the category of chain complexes of modules over a commutative ring $R$, equipped with the projective model structure. Tabuada \cite{Ta} has shown that the category $\VCat$ of $dg$-categories over $R$ admits a canonical model structure.

(vii) Let $\Vv$ be the category of symmetric spectra, equipped either with the levelwise projective or with the stable projective model structure. Tabuada \cite{Ta2} shows that also in these cases, $\VCat$ admits a canonical model structure.\end{stit}\vspace{1ex}

The following result of a more general nature is due to Lurie, see Proposition A.3.2.4 and Theorem A.3.2.24 in the Appendix A of \cite{Lu}. For a discussion of the notion of \emph{Dwyer-Kan equivalence} and of Lurie's \emph{invertibility axiom} \cite[A.3.2.12]{Lu}, we refer the reader to Definition \ref{coherence} and Remark \ref{Invertibility} below.

\begin{thm}\label{Lurie}Let $\,\Vv$ be a combinatorial monoidal model category such that\begin{enumerate}\item[(i)]the class of weak equivalences is closed under filtered colimits;\item[(ii)]every monomorphism is a cofibration;\item[(iii)]the invertibility axiom  holds.\end{enumerate}Then the canonical model structure on $\VCat$ exists and is left proper. The weak equivalences are precisely the Dwyer-Kan equivalences; the fibrations between fibrant objects are the local fibrations which induce an isofibration on path-components.\end{thm}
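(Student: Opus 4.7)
The plan is to establish the canonical model structure by Jeff Smith's recognition theorem for combinatorial model categories, declaring the weak equivalences to be the Dwyer--Kan equivalences and the cofibrations to be those characterised in Remark \ref{cofibration0}. Combinatoriality of $\Vv$ together with condition (i) ensures $\VCat$ is locally presentable and that Dwyer--Kan equivalences form an accessible, filtered-colimit-closed class. Condition (ii), that every monomorphism in $\Vv$ is a cofibration, underlies both left properness and the good interaction of cofibrant pushouts in $\VCat$ with the computation of local hom-objects. Condition (iii), the invertibility axiom, is precisely the input needed for a ``walking coherent equivalence'' to serve as a generating trivial cofibration.

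Concretely, I would take a generating set $I$ of cofibrations consisting of the map $\emptyset\rto[0]$ adjoining a single object, together with, for each two-element set $S$, the images under the free $\Vv$-category functor of generating cofibrations of $\Vv$-graphs on $S$. For generating trivial cofibrations $J$ I would take the analogous images of generating trivial cofibrations of $\Vv$-graphs, plus one extra ``interval'' map $[0]\sqcup[0]\rto\HH$ in which $\HH$ is a cofibrant replacement in $\VCat_{\{0,1\}}$ of the chaotic $\Vv$-category with contractible hom-objects. The maps with the right lifting property against $I$ are then the local trivial fibrations surjective on objects, while those with the right lifting property against $J$ are the local fibrations inducing an isofibration on path-components.

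The main work is to verify Smith's recognition criteria, which reduce to checking that every map in $J$ is a Dwyer--Kan equivalence and that $J$-injective Dwyer--Kan equivalences lift against $I$. The first is immediate for the $\VCat_S$-type generators via Proposition \ref{VCatS} and, for the interval generator $[0]\sqcup[0]\rto\HH$, is exactly the content of the invertibility axiom (iii). The second is a factorisation argument: given a Dwyer--Kan equivalence $f:\AA\rto\BB$ that lifts against all of $J$, the $\VCat_S$-part of $J$ forces $f$ to be a local trivial fibration, while lifting against copies of $[0]\sqcup[0]\rto\HH$ promotes essential surjectivity to strict surjectivity on objects. I expect the main obstacle to be showing that pushouts and transfinite compositions along $[0]\sqcup[0]\rto\HH$ preserve Dwyer--Kan equivalences; this analysis of the resulting ``coherent equivalence'' gluing is essentially the content of the invertibility axiom, and it is here that condition (ii) is needed to identify the hom-objects of such pushouts as controllable pushouts in $\Vv$.

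Left properness then follows from condition (ii) by a standard argument: cofibrations in $\VCat$ are injective on objects and componentwise cofibrations on hom-objects, and the hypothesis that monomorphisms are cofibrations in $\Vv$ guarantees left properness of $\Vv$ and hence that pushouts of local weak equivalences along cofibrations in $\VCat$ remain local weak equivalences. The characterisation of fibrations between fibrant objects is then immediate from the description of $J$-injective maps: between locally fibrant $\Vv$-categories, the lifting conditions against the $\VCat_S$-generators are automatic, leaving only the lift against $[0]\sqcup[0]\rto\HH$, which expresses precisely the isofibration condition on path-components.
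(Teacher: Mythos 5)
First, a caveat about the comparison: the paper does not prove this theorem at all --- it is quoted from Lurie (Propositions A.3.2.4 and A.3.2.24 of \cite{Lu}) and used as a foil for the authors' own Theorem \ref{canonical}, whose proof (Theorem \ref{modelstructure} and Section \ref{s3}) proceeds under complementary hypotheses. So your proposal can only be judged on its own merits and against the parallel argument the paper gives in its setting.

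There is a genuine error in your choice of generating trivial cofibrations. You put the map $[0]\sqcup[0]\rto\HH$ into $J$, where $\HH$ is a cofibrant replacement of the chaotic $\Vv$-category on two objects. This map is a cofibration but it is emphatically \emph{not} a Dwyer--Kan equivalence: the hom-object from $0$ to $1$ passes from the initial object of $\Vv$ to a weakly contractible object, so it is not even a local weak equivalence, and $\pi_0$ of it is bijective on objects but not full. Smith's recognition theorem requires every map of $J$ (and all of its pushouts and transfinite composites) to be a weak equivalence, so the argument collapses at the first criterion. Moreover, maps with the right lifting property against $[0]\sqcup[0]\rto\HH$ are not the local fibrations inducing isofibrations on $\pi_0$ (that lifting problem prescribes \emph{both} endpoints of the lifted equivalence), so your identification of the $J$-injectives fails as well. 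The correct interval generator, as in the paper's Section \ref{proof} and in Bergner's and Lurie's treatments, is the inclusion of a \emph{single} endpoint $[0]\rto\HH$.

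Even after this correction, the two steps you flag as ``the main obstacle'' and as ``essentially the content of the invertibility axiom'' are not discharged by the axiom as stated. The invertibility axiom concerns pushouts along a natural cofibration $\JJ\ito\HH$ of a homotopy equivalence $\alpha:\JJ\rto\AA_f$ with \emph{fibrant} target; showing that an arbitrary pushout along $[0]\rto\HH$ in $\VCat$ is a Dwyer--Kan equivalence requires an additional structural analysis of such pushouts --- the paper, in its own setting, decomposes $\{0\}\rto\GG$ through the one-object category $\GG_{0,0}$ and invokes the Interval Cofibrancy Theorem \ref{cofibrant} to control the resulting free extension; Lurie's hypothesis (ii) plays the analogous role in his argument. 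Similarly, left properness of $\VCat$ does not follow ``componentwise'' from left properness of $\Vv$, because pushouts in $\VCat$ along functors that are not bijective on objects are not computed hom-object by hom-object; this again needs the explicit description of such pushouts. In short, the skeleton (Smith's theorem with local generators plus one interval generator) is the right shape, but the generator is wrong and the two genuinely hard verifications are asserted rather than proved.
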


Note that by (i) $\Vv$ is compactly generated and by (ii) all objects of $\Vv$ are cofibrant so that $\Vv$ is \emph{adequate} and moreover \emph{left proper}. The main purpose of this paper is to establish the following result, which complements Lurie's result in some sense.

The notion of a \emph{generating set of $\,\Vv$-intervals} will be introduced in Definition \ref{interval}. Note that in many concrete cases the existence of a generating set of $\,\Vv$-intervals is automatic, see Lemma \ref{combinatorial} and Corollary \ref{allfibrant}.

\begin{thm}\label{canonical}Let $\,\Vv$ be an adequate monoidal model category such that

\begin{enumerate}\item[(i)]the monoidal unit is cofibrant;\item[(ii)]the underlying model structure is right proper;\item[(iii)]there exists a generating set of $\,\Vv$-intervals.\end{enumerate}Then the canonical model structure on $\VCat$ exists and is right proper. The weak equivalences are precisely the Dwyer-Kan equivalences; the fibrations are the local fibrations which have the path-lifting property with respect to $\Vv$-intervals.\end{thm}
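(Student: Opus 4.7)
The plan is to apply Quillen's small object argument directly to $\VCat$, combining the fibrewise model structures of Proposition~\ref{VCatS} with the $\Vv$-intervals supplied by hypothesis~(iii). Write $[X]$ for the $\Vv$-category on two objects $\{0,1\}$ whose only non-unit hom-object is $[X](0,1)=X$; this is well-defined and well-behaved thanks to the cofibrancy of the monoidal unit assumed in (i). As generating cofibrations I take the set consisting of (a) the map $\emptyset\to *$ from the empty to the terminal $\Vv$-category and (b) the maps $[i]\colon[K]\to[L]$ for each generating cofibration $i\colon K\to L$ of $\Vv$. As generating trivial cofibrations I take (a$'$) the analogous maps $[j]$ for each generating trivial cofibration $j$ of $\Vv$, together with (b$'$) the object-inclusion $*\to\HH$ selecting one vertex of $\HH$, as $\HH$ ranges over the fixed generating set of $\Vv$-intervals.

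The right-lifting class of $\{(a),(b)\}$ consists of the local trivial fibrations surjective on objects, while that of $\{(a'),(b')\}$ consists of the local fibrations having the path-lifting property against the generating $\Vv$-intervals; specialised to the terminal $\Vv$-category this recovers local fibrancy, matching the definition of the canonical model structure. Smallness for the small object argument is inherited hom-wise from the $\otimes$-smallness of all objects of $\Vv$ provided by compact generation.

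The main obstacle is verifying that every relative cell complex built from the generating trivial cofibrations is a local weak equivalence. Cells of type (a$'$) live inside a single $\VCat_S$ and are handled by Proposition~\ref{VCatS}. The critical step concerns cells of type (b$'$): for any $\Vv$-functor $x\colon *\to\AA$ and any generating $\Vv$-interval $\HH$, the pushout $\AA\to\AA\cup_{*}\HH$ must be shown to be a Dwyer-Kan equivalence. This is precisely where the Interval Cofibrancy Theorem announced in the abstract must be invoked: it guarantees that the hom-objects of $\HH$, and of the inclusion of its endpoints into $\HH$, are sufficiently cofibrant that the hom-objects of the pushout can be computed and recognised as local weak equivalences using the monoid axiom (from adequacy) together with right properness~(ii). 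Transfinite composition is then covered by Lemma~\ref{transfinite}.

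With the model structure in hand, right properness of $\VCat$ follows from right properness of $\Vv$ by a hom-wise analysis of pullbacks along local fibrations, each of which is a fibration in $\Vv$. To identify the weak equivalences with the Dwyer-Kan equivalences I would appeal to the coherence principle flagged in the introduction: under (i)--(iii) every local homotopy equivalence extends to a coherent one, so that a $\Vv$-functor is a weak equivalence if and only if it is a local weak equivalence which is essentially surjective on objects up to coherent equivalence, which is exactly the form of essential surjectivity enforced by the generating trivial cofibrations (b$'$). The stated characterisation of fibrations is then simply the definition of the right-lifting class of (a$'$) and (b$'$).
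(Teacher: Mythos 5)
Your overall strategy coincides with the paper's: the same generating (trivial) cofibrations, the same identification of their right-lifting classes with the local trivial fibrations surjective on objects (resp.\ the path-lifting local fibrations), the small object argument powered by $\otimes$-smallness and Lemma \ref{transfinite}, the Interval Cofibrancy Theorem for the critical pushout along $\{0\}\to\HH$, and the coherence axiom for the Dwyer-Kan identification. There is, however, one genuine gap: the 2-out-of-3 axiom. For the compatibility expressed by Lemma \ref{surjective} (trivial fibrations $=$ fibrations $\cap$ weak equivalences $=$ the right-lifting class of your generating cofibrations) you are forced to take as weak equivalences the local weak equivalences which are essentially surjective \emph{via $\Vv$-intervals}, and for this class 2-out-of-3 is not formal. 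The composite case requires that ``being joined by a $\Vv$-interval'' be a transitive relation on objects, which is precisely the content of the Interval Amalgamation Lemma \ref{IAL} (itself resting on the Interval Cofibrancy Theorem and proved in Section \ref{proofIAL}); and the case where $G$ and $GF$ are weak equivalences requires that local weak equivalences reflect equivalence of objects, which uses right properness through Lemmas \ref{locallyfibrant} and \ref{rightproper}. Your proposal never mentions amalgamation of intervals, so this step is missing. (If instead you take Dwyer-Kan equivalences as weak equivalences from the outset, 2-out-of-3 becomes trivial, but then Lemma \ref{surjective} itself already needs the coherence axiom and right properness to reconcile the two notions of essential surjectivity, so the work reappears rather than disappears.)

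A smaller inaccuracy: in the pushout $\AA\to\AA\cup_{\{0\}}\HH$ the tools you name (the monoid axiom and right properness) are not what carries the argument. The paper factors this pushout through the one-object $\Vv$-category $\HH_{0,0}$ on the endomorphism monoid $\HH(0,0)$: the Interval Cofibrancy Theorem \ref{cofibrant} shows $\HH(0,0)$ is a weakly contractible \emph{cofibrant monoid}, so that $\{0\}\to\HH_{0,0}$ pushes out to a trivial cofibration in $\VCat_{\Ob(\AA)}$ (using the transferred model structure of Proposition \ref{VCatS}), while the second stage $\HH_{0,0}\to\HH$ pushes out to a $\Vv$-functor inducing isomorphisms on hom-objects and is essentially surjective by construction. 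Right properness plays no role here, and the monoid axiom is only one of two alternative sufficient conditions for adequacy, not something the proof may assume.
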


\begin{proof}The existence of the canonical model structure is Theorem \ref{modelstructure}. The identification of the class of weak equivalences follows from Propositions \ref{Dwyer-Kan} and \ref{Boardman-Vogt}.\end{proof}

The category of simplicial sets fulfills the hypotheses of both theorems so that Bergner's result \cite{Be1} can be considered as a special instance of both theorems.\vspace{1ex}

Let $\Iso$ be the $\Vv$-category on $\{0,1\}$ representing a single isomorphism: thus, $\Iso(0,0)=\Iso(0,1)=\Iso(1,1)=\Iso(1,0)=I_\Vv$, the unit of $\Vv$.

\begin{dfn}\label{interval}A \emph{$\Vv$-interval} is a cofibrant object in the transferred model structure on $\VCat_{\{0,1\}}$, weakly equivalent to the $\Vv$-category $\,\Iso$.

A set $\,\Gg$ of $\Vv$-intervals is \emph{generating} if each $\Vv$-interval $\HH$ is retract of a trivial extension $\KK$ of a $\Vv$-interval $\GG$ in $\Gg$, i.e. if there exists a diagram in $\VCat_{\{0,1\}}$\begin{diagram}[small,heads=littlevee]\GG&\rEmbed^\sim_j&\KK&\pile{\rTo^r\\\lTo_i}&\HH\end{diagram}in which $\GG$ belongs to $\Gg$, $j$ is a trivial cofibration and $ri=id_\HH$.\end{dfn}

We emphasise that conditions (i) and (ii) in our theorem are essential, but (iii) is a relatively innocent condition of a set-theoretical nature. For instance:

 \begin{lma}\label{combinatorial}For every combinatorial monoidal model category $\Vv$ there exists a generating set of $\,\Vv$-intervals.\end{lma}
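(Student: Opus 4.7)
The plan is to leverage the combinatoriality of $\Vv$, which passes to the transferred model structure on $\VCat_{\{0,1\}}$ (Proposition \ref{VCatS}), in order to enumerate a set-sized family of $\Vv$-intervals that generates all others. First, $\VCat_{\{0,1\}}$ is itself combinatorial: it is locally presentable as the category of algebras for an accessible monad on the locally presentable product category $\Vv^{\{0,1\}\times\{0,1\}}$, and it is cofibrantly generated with generators obtained by applying the free $\Vv$-category functor to those of $\Vv$. Using Smith's accessibility theorem for combinatorial model categories, I would choose a regular cardinal $\lambda$ so that $\VCat_{\{0,1\}}$ is locally $\lambda$-presentable, $\Iso$ is $\lambda$-presentable, the generating (trivial) cofibrations have $\lambda$-presentable domains and codomains, and the class of weak equivalences in $\VCat_{\{0,1\}}$ is closed under $\lambda$-filtered colimits.

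Let $\Gg$ denote a set of representatives for the isomorphism classes of $\lambda$-presentable $\Vv$-intervals: since $\lambda$-presentable objects in a locally $\lambda$-presentable category form an essentially small full subcategory, and ``being a $\Vv$-interval'' is a property closed under isomorphism, $\Gg$ is genuinely a set.

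To verify that $\Gg$ is generating, fix an arbitrary $\Vv$-interval $\HH$. It suffices to produce a weak equivalence $f \colon \GG \to \HH$ from some $\GG \in \Gg$: the factorization axiom then gives $f = p \circ j$ with $j \colon \GG \rightarrowtail \KK$ a trivial cofibration and $p \colon \KK \to \HH$ a trivial fibration, and the cofibrancy of $\HH$ provides a section $i \colon \HH \to \KK$ of $p$, so that setting $r := p$ yields the required retract diagram $\GG \rightarrowtail \KK \rightleftarrows \HH$ with $\GG \in \Gg$. To construct $f$, I would write $\HH$ as a $\lambda$-filtered colimit of $\lambda$-presentable cofibrant subobjects and exploit the $\lambda$-accessibility of the class of weak equivalences to isolate a cofinal subsystem whose members are themselves $\Vv$-intervals mapping to $\HH$ by a weak equivalence, then take $\GG$ to be any sufficiently large member of this subsystem.

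The main obstacle is this last extraction: a generic $\lambda$-presentable cofibrant subobject of $\HH$ need not itself be a $\Vv$-interval, since its hom-objects need not be weakly equivalent to $I_\Vv$. Overcoming this requires combining the accessibility of weak equivalences with a bounded small-object-style construction that, starting from any $\lambda$-presentable cofibrant subobject of $\HH$, enlarges it by attaching $\lambda$-presentable cells within $\HH$ until the inclusion becomes a weak equivalence, while remaining $\lambda$-presentable — with the accessibility of weak equivalences ensuring termination at a bounded stage.
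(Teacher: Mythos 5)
Your endgame is fine: once you have a weak equivalence $f:\GG\to\HH$ with $\GG$ in your candidate set, factoring $f$ as a trivial cofibration $j:\GG\rightarrowtail\KK$ followed by a trivial fibration $p:\KK\to\HH$ and lifting $id_\HH$ against $p$ (using cofibrancy of $\HH$) does produce the diagram required by Definition \ref{interval}; this is a harmless variant of the paper's use of Brown's Lemma. The genuine gap is the step you yourself flag as ``the main obstacle'': producing, for an \emph{arbitrary} $\Vv$-interval $\HH$, a weak equivalence from some $\lambda$-presentable $\Vv$-interval. You describe what a proof ``would require'' --- writing $\HH$ as a $\lambda$-filtered colimit of $\lambda$-presentable \emph{cofibrant} subobjects (itself not automatic: subobjects of a cofibrant object need not be cofibrant, and a cell presentation of $\HH$ is the natural substitute), and then a bounded closure argument forcing the inclusion of a stage to become a weak equivalence --- but you never carry it out. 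Since this extraction \emph{is} the content of the lemma, the argument as written is incomplete.

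The paper closes exactly this gap by a different packaging of the same combinatoriality input: it works in the overcategory $\VCat_{\{0,1\}}/\Iso_f$, which is again combinatorial, and invokes the theorem of Rosick\'y and Raptis that the class of weak equivalences of a combinatorial model category is an accessible class. From this it deduces that the category of cofibrant objects of $\VCat_{\{0,1\}}$ equipped with a weak equivalence to $\Iso_f$ is accessible, hence admits a \emph{set} $\Gg$ of objects such that every object receives a map from one of them; because such a map lives over $\Iso_f$, it is automatically a weak equivalence by 2-out-of-3. That is precisely the weak equivalence $\GG\to\HH$ you need, obtained without any hand-rolled cell-attachment or L\"owenheim--Skolem-style closure. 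If you want to keep your presentation, you should either cite this accessibility result and apply it to the overcategory as the paper does, or actually prove the bounded closure step; merely asserting that it ``requires'' such a construction does not establish the lemma.
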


\begin{proof}Since $\Vv$ is combinatorial, the overcategory $\VCat_{\{0,1\}}/\Iso_f$ (where $\Iso_f$ denotes a fibrant replacement of $\Iso$) is combinatorial and hence has an accessible class of weak equivalences, cf. Rosicky \cite{Ro} and  Raptis \cite{Ra}. This implies that the class of cofibrant objects in $\VCat_{\{0,1\}}$ equipped with a weak equivalence to $\Iso_f$ is accessible, i.e. there exists a \emph{set} $\Gg$ of $\Vv$-intervals such that for any $\Vv$-interval $\HH$ there is an object $\GG$ in $\Gg$ and a map (necessarily a weak equivalence) $\GG\to\HH$. According to Brown's Lemma the latter factors as a trivial cofibration $j:\GG\to\KK$ followed by a retraction $r:\KK\to\HH$ of a trivial cofibration $i:\HH\to\KK$. This just expresses that $\Gg$ is a generating set of $\Vv$-intervals.\end{proof}

In concrete examples, it is often possible to describe a generating set of $\,\Vv$-intervals directly. If $\Vv$ is the category of simplicial sets, the class of $\Vv$-intervals with countably many simplices is generating, cf. Bergner \cite[Lemmas 4.2 and 4.3]{Be1}, and is essentially small. We also remark that if every object in $\Vv$ is fibrant (which is the case in examples \ref{known}(ii), (iii), (iv), (v) and (vi) above), any single $\Vv$-interval is already generating, cf. Lemma \ref{single} below. Since in the latter case $\Vv$ is also right proper, we obtain:

\begin{cor}\label{allfibrant}If $\,\Vv$ is an adequate monoidal model category with cofibrant unit, in which every object is fibrant, then the canonical model structure on $\VCat$ exists.\end{cor}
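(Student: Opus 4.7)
The plan is to verify the three hypotheses of Theorem \ref{canonical} and invoke it. Condition (i), cofibrancy of the monoidal unit, is already part of the hypothesis. The content lies in deriving conditions (ii) and (iii) from the fact that every object of $\Vv$ is fibrant.

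For condition (ii), right-properness of $\Vv$, I would appeal to the standard lemma that any model category in which every object is fibrant is automatically right proper. The argument is by now classical: given a pullback of a weak equivalence $w$ along a fibration $p$, one factors $w$ as a trivial cofibration followed by a trivial fibration and then uses stability of trivial fibrations under pullback, together with Ken Brown's lemma applied to the trivial cofibration factor; the hypothesis that every object is fibrant ensures that all objects entering the argument are fibrant, which is precisely what Ken Brown's lemma requires.

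For condition (iii), the existence of a generating set of $\Vv$-intervals, I would first produce a single $\Vv$-interval. By Proposition \ref{VCatS}, the category $\VCat_{\{0,1\}}$ carries a transferred model structure, and a cofibrant replacement $\GG\to\Iso$ of the standard isomorphism category $\Iso$ in this model structure is by definition a $\Vv$-interval. The Lemma \ref{single} promised in the paragraph preceding the corollary asserts that under the all-fibrant hypothesis, any single $\Vv$-interval is generating, so $\Gg=\{\GG\}$ is a generating set. Once all three hypotheses are checked, Theorem \ref{canonical} delivers the canonical model structure on $\VCat$. The only genuine difficulty is deferred to Lemma \ref{single}, which is where the all-fibrant hypothesis truly earns its keep; the present corollary itself is essentially just bookkeeping.
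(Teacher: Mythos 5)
Your proposal is correct and matches the paper's own (implicit) argument exactly: the paper deduces the corollary from Theorem \ref{canonical} by noting that all objects fibrant implies right properness, and that Lemma \ref{single} makes any single $\Vv$-interval (e.g.\ a cofibrant replacement of $\Iso$ in $\VCat_{\{0,1\}}$) a generating set. Nothing is missing.
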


In those cases where Corollary \ref{allfibrant} applies, the fibrations of the canonical model structure can be characterised in a concise way, since the $W$-construction of \cite{BM1,BM2} provides an explicit generating $\Vv$-interval $W\Iso$ for $\VCat_{\{0,1\}}$. The latter represents \emph{coherent homotopy equivalences} (cf. Definition \ref{defequivalence}) so that the fibrations of the canonical model structure are those local fibrations which are path-lifting with respect to these coherent homotopy equivalences. This characterisation is known for the fibrations of examples \ref{known} (iii)-(v), cf. Lack \cite{La,La2}, but seems to be new for the fibrations of topologically enriched resp. $dg$- categories, cf. \ref{known}(ii), (vi).\vspace{1ex}

An adjunction between symmetric monoidal categories is called \emph{monoidal} if the left and right adjoints are symmetric monoidal functors, and if the unit and counit of the adjunction are monoidal transformations. A monoidal adjunction $\Vv\lra\Vv'$ induces a family of adjunctions $\VCat_S\lra\V'Cat_S$ (varying naturally in $S$) and therefore a ``global'' adjunction $\VCat\lra\V'Cat$. If $\Vv$ and $\Vv'$ are monoidal model categories with cofibrant unit and $\Vv\to\Vv'$ is a left Quillen functor which preserves the monoidal unit, then the induced functor $\VCat_{\{0,1\}}\to\V'Cat_{\{0,1\}}$ takes $\Vv$-intervals to $\Vv'$-intervals. Hence, the global right adjoint $\V'Cat\to\VCat$ preserves the (trivial) fibrations of the canonical model structures, and we get:

\begin{cor}Consider a monoidal Quillen adjunction $\Vv\lra\Vv'$ between monoidal model categories satisfying the hypotheses of Theorem \ref{canonical} and such that the left adjoint preserves the monoidal unit. Then the induced adjunction $\VCat\lra\V'Cat$ is again a Quillen adjunction with respect to the canonical model structures.\end{cor}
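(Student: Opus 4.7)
The plan is to verify that the right adjoint $G_*\colon\V'Cat\to\VCat$ of the induced adjunction preserves fibrations and trivial fibrations of the canonical model structures, which is equivalent to $F_*\dashv G_*$ being a Quillen pair. Since both $\Vv$ and $\Vv'$ satisfy the hypotheses of Theorem \ref{canonical}, the explicit descriptions of (trivial) fibrations supplied by that theorem are available on both sides.

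The first and only substantive step is to verify the assertion made in the paragraph preceding the statement, namely that $F_*\colon\VCat_{\{0,1\}}\to\V'Cat_{\{0,1\}}$ sends $\Vv$-intervals to $\Vv'$-intervals. Applying $G$ hom-wise preserves fibrations and trivial fibrations in $\Vv'$, so $G_*$ is right Quillen on every $\VCat_S$, and thus the set-level adjunction $F_*\dashv G_*\colon\VCat_{\{0,1\}}\rla\V'Cat_{\{0,1\}}$ is Quillen. By Ken Brown's lemma, $F_*$ preserves weak equivalences between cofibrant objects. Since $F$ preserves the monoidal unit, $F_*(\Iso_\Vv)=\Iso_{\Vv'}$. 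Combining these, any $\Vv$-interval $\HH$, being a cofibrant object of $\VCat_{\{0,1\}}$ weakly equivalent to $\Iso_\Vv$, is sent by $F_*$ to a cofibrant object of $\V'Cat_{\{0,1\}}$ weakly equivalent to $\Iso_{\Vv'}$, that is, to a $\Vv'$-interval.

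Preservation of trivial fibrations by $G_*$ is then immediate: a trivial fibration in the canonical structure is a local trivial fibration surjective on objects, and $G_*$ does not change the object-set while $G$ preserves trivial fibrations in $\Vv'$. For fibrations, I use that a fibration is a local fibration with the path-lifting property with respect to $\Vv$-intervals. Local fibrations are preserved by $G_*$ since $G$ preserves fibrations. The path-lifting property is a right lifting property against a set of maps built from $\Vv$-intervals; by the hom-set adjunction $F_*\dashv G_*$ combined with the first step, $G_*(f)$ satisfies this right lifting property provided $f$ satisfies the corresponding right lifting property against the $F_*$-images, which are $\Vv'$-intervals. Since $f$ is a fibration in $\V'Cat$, it does lift against \emph{all} $\Vv'$-intervals, in particular against the $F_*$-images of $\Vv$-intervals, and the required lift for $G_*(f)$ follows.

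The main obstacle is thus concentrated in the interval-preservation step of the second paragraph, where the hypotheses that $F$ preserves the monoidal unit and that both monoidal units are cofibrant enter essentially; everything else is a routine transfer of right lifting properties under adjunction, together with the explicit characterisations of (trivial) fibrations supplied by Theorem \ref{canonical}.
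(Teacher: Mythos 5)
Your overall strategy coincides with the paper's: the entire proof of this corollary in the paper is the paragraph preceding it, which asserts that the induced left adjoint $F_*\colon\VCat_{\{0,1\}}\to\V'Cat_{\{0,1\}}$ takes $\Vv$-intervals to $\Vv'$-intervals and deduces that the global right adjoint preserves the (trivial) fibrations of the canonical model structures. Your third paragraph — $G$ applied hom-wise preserves (trivial) fibrations, object-sets are unchanged, and the path-lifting property for $G_*(f)$ against $\{i\}\to\HH$ transposes under the adjunction to the lifting property of $f$ against $\{i\}\to F_*\HH$ — is correct and is exactly the intended argument.

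The gap is in your justification of the interval-preservation step, which is precisely the one step the paper leaves unproved. Ken Brown's lemma gives that $F_*$ preserves weak equivalences \emph{between cofibrant objects}, but a $\Vv$-interval $\HH$ is by definition equipped with a weak equivalence $\HH\eqv\wIso$ to a \emph{fibrant} replacement of $\Iso$, and neither $\wIso$ nor $\Iso$ itself is cofibrant in $\VCat_{\{0,1\}}$ in general (cofibrant objects there are retracts of free cellular extensions, whereas $\Iso$ is obtained by imposing the relations $gf=\mathrm{id}_0$, $fg=\mathrm{id}_1$, which are not cofibrations; this is why the paper repeatedly passes to cofibrant replacements $\Iso_c\to\Iso$ and to $W(H,\Iso)$). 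So the zig-zag $\HH\to\wIso\leftarrow\Iso$ witnessing ``weakly equivalent to $\Iso$'' has non-cofibrant terms everywhere except at $\HH$, and applying $F_*$ to it need not yield weak equivalences; in particular $F_*(\Iso_\Vv)=\Iso_{\Vv'}$ does not by itself let you conclude that $F_*\HH$ is weakly equivalent to $\Iso_{\Vv'}$. One way to close the gap is via the Boardman--Vogt resolution: by Lemma \ref{H} and the discussion preceding Proposition \ref{Boardman-Vogt}, $W(H,\Iso)$ is an explicit \emph{cofibrant} replacement of $\Iso_\Vv$, and since $F$ is strong monoidal, unit-preserving and colimit-preserving, one checks $F_*W(H,\Iso)\cong W(FH,\Iso_{\Vv'})$ with $FH$ an interval for $\Vv'$, so that $F_*W(H,\Iso)$ is a $\Vv'$-interval. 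An arbitrary $\Vv$-interval $\HH$ is then connected to $W(H,\Iso)$ by a weak equivalence between cofibrant objects (factor $W(H,\Iso)\to\wIso$ as a trivial cofibration followed by a trivial fibration and lift $\HH\to\wIso$ through the latter), and here Ken Brown does apply, showing that $F_*\HH$ is weakly equivalent to the $\Vv'$-interval $F_*W(H,\Iso)$ and hence is itself a $\Vv'$-interval. With that step repaired, the rest of your argument goes through.
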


It is not difficult to check that the induced Quillen adjunction $\VCat\lra\V'Cat$ is a Quillen equivalence whenever the given Quillen adjunction $\Vv\lra\Vv'$ is. In particular, examples \ref{known}(i) and (ii) are related by a canonical Quillen equivalence.\vspace{1ex}

The proof of Theorem \ref{canonical} relies heavily on the following property of cofibrant $\Vv$-categories on two objects:

\begin{thm}[Interval Cofibrancy Theorem]\label{cofibrant}Let $\Vv$ be an adequate monoidal model category with cofibrant unit and let $\HH$ be a cofibrant $\Vv$-category on $\{0,1\}$. Then

\begin{enumerate}\item[(i)]The endomorphism-monoids $\HH(0,0)$ and $\HH(1,1)$ are cofibrant monoids;
\item[(ii)]$\HH(0,1)$ is cofibrant as a left $\HH(1,1)$-module and as a right $\HH(0,0)$-module.
\item[(iii)]$\HH(1,0)$ is cofibrant as a left $\HH(0,0)$-module and as a right $\HH(1,1)$-module.\end{enumerate}\end{thm}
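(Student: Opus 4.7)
My plan is to prove all three assertions simultaneously by transfinite induction on the cellular structure of $\HH$. A cofibration in $\VCat_{\{0,1\}}$ is a retract of a transfinite composition of pushouts along generating maps $\Pp_{ij}[u]:\Pp_{ij}[K]\to\Pp_{ij}[L]$, where $u:K\to L$ is a generating cofibration of $\Vv$ and $\Pp_{ij}$ is the left adjoint that freely adjoins an arrow at position $(i,j)\in\{0,1\}^2$. The properties we wish to control live in transferred model structures on monoids and on bimodules in $\Vv$, which exist by adequacy (monoids and bimodules being algebras over suitable set-coloured operads). Since cofibrancy in these categories is stable under retracts and transfinite composition, the task reduces to checking a single cell-attachment step, starting from the initial $\Vv$-category $\HH_0$ on $\{0,1\}$. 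In $\HH_0$ one has $\HH_0(k,k)=I_\Vv$, which is cofibrant as a monoid (being the initial monoid, a consequence of the cofibrant-unit hypothesis), and $\HH_0(k,1-k)=\emptyset$, trivially cofibrant as a bimodule.

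For the successor step I would case-split on the position $(i,j)$ of the attached cell. The diagonal cases $i=j=k$ are mild: the attachment induces a pushout of monoids $\HH_\al(k,k)\to\HH_{\al+1}(k,k)$ along a free-monoid cofibration, and the modules $\HH_\al(k,1-k)$ and $\HH_\al(1-k,k)$ are transported along this pushout by extension of scalars; standard Schwede--Shipley-type arguments, enabled by adequacy and the pushout-product axiom, propagate the cofibrancy. The off-diagonal case $(i,j)=(0,1)$ (and symmetrically $(1,0)$) is the main obstacle: a single cell at $(0,1)$ simultaneously enlarges all four hom-objects, because the free $\Vv$-category functor composes the new generator with pre-existing arrows of the opposite direction, creating new endomorphisms at both $0$ and $1$ as well as new $(1,0)$-arrows via alternating paths. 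Here I would describe each $\HH_{\al+1}(i',j')$ explicitly as an iterated pushout in the appropriate transferred category of monoids or bimodules, whose building blocks are tensor products of the inductively cofibrant pieces $\HH_\al(0,0),\HH_\al(0,1),\HH_\al(1,0),\HH_\al(1,1)$ with $u$ plugged into a single slot, indexed by alternating words in the quiver on $\{0,1\}$ that use the new generator exactly once.

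The heart of the argument, and the principal technical obstacle, is to verify that these iterated pushouts live in, and are genuine pushouts in, the correct transferred categories of modules over the new monoids $\HH_{\al+1}(0,0)$ and $\HH_{\al+1}(1,1)$, and that cofibrancy is preserved at each step. This requires three ingredients: the inductive cofibrancy of the module factors, the monoid-axiom and pushout-product behaviour inherited from adequacy of $\Vv$, and the cofibrant-unit hypothesis, which ensures that the length-zero contribution (the unit, appearing as the identity) is well-behaved and does not introduce non-cofibrant factors into the free-monoid and free-bimodule constructions. Once this bookkeeping is organised for a single off-diagonal cell, the induction closes and the three assertions (i)--(iii) propagate to all cellular, and hence all cofibrant, $\Vv$-categories on $\{0,1\}$.
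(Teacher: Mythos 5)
Your overall strategy---transfinite induction on the cellular structure of $\HH$ in $\VCat_{\{0,1\}}$, a case split on the position $(i,j)$ of the attached cell, and an explicit description of the resulting pushout in terms of the free-monoid/free-module constructions---is the same as the paper's. But there is a genuine gap, and it sits exactly where the whole argument turns: the induction as you have set it up does not close, because (i)--(iii) are not a strong enough inductive hypothesis. The paper proves a strengthened statement (Theorem \ref{mainthm}) which adds that the composition maps $\partial\HH_0=\HH(1,0)\otimes_{\HH_1}\HH(0,1)\to\HH_0$ and $\partial\HH_1=\HH(0,1)\otimes_{\HH_0}\HH(1,0)\to\HH_1$ are cofibrations between cofibrant objects of $\Vv$, and this extra clause is used essentially in the inductive step. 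Relatedly, you misjudge where the difficulty lies. Attaching a cell at the diagonal position $(0,0)$ is not ``mild'': it changes all four hom-objects, and in particular the opposite endomorphism monoid becomes the pushout of $\HH_1$-bimodules $\KK_1=\HH_1\cup_{\partial\HH_1}\bigl(\HH(0,1)\otimes_{\HH_0}\KK_0\otimes_{\HH_0}\HH(1,0)\bigr)$, which is a monoid only via an ad hoc multiplication. Extension of scalars along $\HH_0\to\KK_0$ handles the $\KK_0$-module structures of $\KK(0,1)$ and $\KK(1,0)$, but says nothing about why $\HH_1\to\KK_1$ is a cofibration of monoids, why $\KK(0,1)$ is cofibrant as a left $\KK_1$-module, or why $\partial\KK_0\to\KK_0$ is again a cofibration. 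Proving these requires filtering $\KK_1$ by word-length and interpolating extra filtration stages built by replacing occurrences of $\HH_0$ by $\partial\HH_0$---which is precisely why $\partial\HH_0\to\HH_0$ being a cofibration must be carried along as part of the induction hypothesis. Without that strengthening, your diagonal step fails; in the paper this diagonal case is the hardest verification, not the easiest.

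A second, smaller but still substantive issue: ``cofibrancy in these categories is stable under retracts and transfinite composition'' is not a usable principle here, because the categories themselves move---at each stage the monoids $\HH_0,\HH_1$ change, so the module categories in which (ii) and (iii) live change too. What each cell attachment must actually deliver is a family of \emph{relative} statements (that $\HH_0\to\KK_0$ and $\HH_1\to\KK_1$ are cofibrations of monoids, that the base-change comparison maps such as $\KK_1\otimes_{\HH_1}\HH(0,1)\to\KK(0,1)$ are cofibrations of $\KK_1$-modules, and that $\partial\HH_0\to\HH_0$ pushes out to $\partial\KK_0\to\KK_0$), so that the colimit over a transfinite chain is again cofibrant over the colimit monoid. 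You gesture at the right bookkeeping but do not identify these relative cofibrations as part of what has to be proved. Finally, adequacy does not imply the monoid axiom (Proposition \ref{adequate2} lists it as one of two alternative \emph{sufficient} conditions for adequacy), and the proof does not need it: the inputs are the pushout-product axiom, the existence of the transferred structures, and the cofibrant unit.
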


The proof (or at least, our proof) of this theorem is technically involved, and will occupy the entire Section \ref{proofICT}. If $\Vv$ is the category of simplicial sets, part (i) goes back to Dwyer-Kan \cite{DK2} and has been used by Bergner \cite{Be1} in her proof of the canonical model structure on simplicially enriched categories.

Given two $\Vv$-intervals $\HH$ and $\KK$, one can amalgamate them by taking first the pushout in $\VCat$ given by identifying the object $1$ in $\HH$ with the object $0$ of $\KK$, and then restricting back to $\VCat_{\{0,1\}}$ where the new objects $0,1$ are the ``outer'' objects $0$ of $\HH$ and $1$ of $\KK$. The Interval Cofibrancy Theorem implies the following fact concerning the amalgamation of intervals, to be proved in Section \ref{proofIAL}.

\begin{lma}[Interval Amalgamation Lemma]\label{IAL}Let $\HH$ and $\KK$ be two $\Vv$-intervals. Then any cofibrant replacement (in $\VCat_{\{0,1\}}$) of their amalgamation $\HH*\KK$ is again a $\Vv$-interval.\end{lma}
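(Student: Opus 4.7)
The plan is to show that $\HH * \KK$, viewed in $\VCat_{\{0,1\}}$ after restriction to its two outer objects, is weakly equivalent to the walking isomorphism $\Iso$. Once this is established, any cofibrant replacement is automatically a $\Vv$-interval by definition.

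First I would realise $\HH * \KK$ as a pushout in $\VCat$: it is the pushout of the span $\HH \leftarrow E \to \KK$, where $E$ denotes the terminal one-object $\Vv$-category with endomorphism-monoid $I_\Vv$ and the two maps pick out $1 \in \HH$ and $0 \in \KK$. It is convenient to regard this pushout as computed in $\VCat_{\{0,1,2\}}$ via the left Quillen extensions $f_!$ that pad out by initial hom-objects on the missing pairs; afterwards one restricts along the inclusion $\{0,2\}\hookrightarrow\{0,1,2\}$ to obtain the amalgamation in $\VCat_{\{0,1\}}$.

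Second, I would invoke the Interval Cofibrancy Theorem (\ref{cofibrant}) applied separately to $\HH$ and $\KK$ in order to make this pushout homotopically meaningful. The theorem guarantees that $\HH(1,1)$ and $\KK(0,0)$ are cofibrant monoids and that the hom-objects $\HH(i,j)$ and $\KK(i,j)$ are cofibrant as one-sided modules over them. Combined with the adequacy of $\Vv$---in particular the monoid-axiom consequence in Proposition \ref{adequate2}---these cofibrancy properties ensure that the formula expressing the hom-objects of the pushout, a bar-type colimit of iterated tensor products over alternating paths through the middle object, is homotopy-invariant in the inputs $\HH$ and $\KK$.

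Third, I would reduce to the universal case. Since $\HH$ and $\KK$ are both weakly equivalent to $\Iso$, the homotopy invariance of the pushout reduces the problem to showing that $\Iso * \Iso$, restricted to its outer objects, is weakly equivalent to $\Iso$. This is a direct verification: in $\Iso$ every hom-object equals $I_\Vv$ and composition is the unit isomorphism, so the unit laws collapse every alternating path through the middle object, yielding $(\Iso * \Iso)(0,2) \simeq I_\Vv$, while the outer endomorphism-monoids remain $I_\Vv$ as well.

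The main obstacle is the second step: rigorously justifying the homotopy invariance of the pushout. The hom-objects of the pushout are colimits of iterated tensor products indexed over alternating paths, and weak-equivalence preservation of such colimits rests essentially on the cofibrancy of hom-objects as modules over the cofibrant endomorphism-monoids, which is precisely the content of Theorem \ref{cofibrant}. Conceptually, the Interval Cofibrancy Theorem is what promotes the strict pushout defining $\HH * \KK$ to a homotopy pushout, after which the reduction to $\Iso * \Iso$ completes the argument.
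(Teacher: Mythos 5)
Your overall strategy --- realise $\HH*\KK$ via a three-object coproduct/pushout, feed in the Interval Cofibrancy Theorem, and compare with $\Iso*\Iso\cong\Iso$ --- is the same as the paper's. But the step you yourself flag as the main obstacle is where the actual content lies, and as formulated it does not go through. You propose to prove that the amalgamation is ``homotopy-invariant in the inputs $\HH$ and $\KK$'' and then substitute $\Iso$ for both. The immediate problem is that $\Iso$ is not cofibrant in $\VCat_{\{0,1\}}$, so no general homotopy-invariance statement (which would be invariance under weak equivalence of \emph{cofibrant} inputs) lets you evaluate the formula at $\Iso$; and no such general invariance statement is proved in the paper either. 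What the paper does instead is construct the comparison $\Vv$-functor $\HH*\KK\to\wIso*\wIso$ induced by the structure maps of the intervals, observe $\Iso*\Iso\cong\Iso$, and then reduce to showing that \emph{every} hom-object of $\HH*\KK$ is weakly contractible. That reduction sidesteps the need for a two-variable invariance theorem.

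The remaining work --- which your sketch does not supply --- is the explicit computation of the hom-objects of $\LL=\partial_{2!}\KK\sqcup\partial_{0!}\HH$ and the verification of their contractibility. Concretely: $\LL_1=\HH_1*\KK_0$ is a coproduct of cofibrant monoids (hence a homotopy coproduct, hence weakly contractible); $\LL(0,1)=\LL_1\otimes_{\HH_1}\HH(0,1)$ and its three companions are obtained by applying left Quillen functors to cofibrant, weakly contractible one-sided modules (using Theorem \ref{cofibrant}(ii) and Lemma \ref{cofibrant2}); $\LL(0,2)$ and $\LL(2,0)$ require Lemma \ref{contractible} on tensor products of contractible cofibrant modules over a contractible monoid; and the outer endomorphism monoids $\LL_0$, $\LL_2$ are pushouts of $\HH_0$, resp. $\KK_1$, along $\partial\HH_0\to\HH_0$, resp. $\partial\KK_1\to\KK_1$, so one needs part (iii) of Theorem \ref{mainthm} (these maps are cofibrations between cofibrant objects) plus a gluing argument to conclude that $\HH_0\to\LL_0$ and $\KK_1\to\LL_2$ are weak equivalences. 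Your appeal to ``the monoid-axiom consequence in Proposition \ref{adequate2}'' is also misplaced: the monoid axiom is a sufficient condition \emph{for} adequacy, not a consequence of it, and it plays no role here; what is used is the existence of transferred model structures on monoids and modules together with the module-level cofibrancy from Theorem \ref{cofibrant}. So the right tool is named, but the proof of the key step is missing and the proposed route to it would fail as stated.
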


\section{The model structure on $\Vv$-categories}\label{s2}

We establish in this section the existence of a canonical model structure on $\VCat$ provided $\Vv$ is a right proper, adequate monoidal model category with cofibrant unit and generating set of $\Vv$-intervals. Our proof uses the Interval Cofibrancy Theorem \ref{cofibrant} and the Interval Amalgamation Lemma \ref{IAL} which will be established in Section \ref{s3}. We also prove that the weak equivalences of the canonical model structure coincide with the \emph{Dwyer-Kan equivalences}. We first show in Proposition \ref{Dwyer-Kan} that this identification is quite obvious if a so-called \emph{coherence axiom} holds. We then show in Proposition \ref{Boardman-Vogt} that any adequate monoidal model category with cofibrant unit satisfies the coherence axiom. Our proof mimicks Boardman and Vogt's proof of the coherence axiom for enrichment in compactly generated topological spaces, cf. \cite[Lemma 4.16]{BV}. It is worthwhile noting that the coherence axiom is an immediate consequence of Lurie's \emph{invertibility axiom}, cf. Remark \ref{Invertibility}.\vspace{1ex}

Recall from the previous section that $I_\Vv$ denotes the unit of the monoidal model category $\Vv$, and that $\Iso$ denotes the $\Vv$-category on the object set $\{0,1\}$ such that $\Iso(0,0)=\Iso(0,1)=\Iso(1,0)=\Iso(1,1)=I_\Vv$ with composition maps given by the canonical isomorphism $I_\Vv\otimes_\Vv I_\Vv\cong I_\Vv$. Let $\wIso$ be a fibrant replacement of $\Iso$ in $\VCat_{\{0,1\}}$. Then, according to Definition \ref{interval}, a \emph{$\Vv$-interval} is a cofibrant $\Vv$-category $\HH$ on $\{0,1\}$ which comes equipped with a weak equivalence $\HH\eqv\wIso$ in $\VCat_{\{0,1\}}$. As usual, different choices of a fibrant replacement $\wIso$ of $\Iso$ lead to the same notion of $\Vv$-interval. Therefore, we can fix once and for all our preferred choice of $\wIso$. If the unit $I_\Vv$ of $\Vv$ is fibrant in $\Vv$, then $\Iso$ is fibrant in $\VCat_{\{0,1\}}$ so that we can put $\wIso=\Iso$.

\begin{lma}\label{single}If all objects of $\,\Vv$ are fibrant, then any single $\Vv$-interval is generating.\end{lma}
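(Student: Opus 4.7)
The plan is to show that for our fixed $\Vv$-interval $\GG$ (with structure map $g:\GG\eqv\Iso$) and any other $\Vv$-interval $\HH$ (with structure map $h:\HH\eqv\Iso$), a suitable diagram as in Definition \ref{interval} exists. The crucial preliminary observation exploits the hypothesis that every object of $\Vv$ is fibrant: this forces the unit $I_\Vv$ to be fibrant, so $\Iso$ is locally fibrant and we may take $\wIso=\Iso$. More importantly, every $\Vv$-category, and in particular $\HH$ itself, is fibrant in the transferred model structure on $\VCat_{\{0,1\}}$, since the fibrations there are precisely the local fibrations.

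Next, I would construct an actual weak equivalence $\GG\to\HH$ (rather than merely a zig-zag). First factor $h$ as a trivial cofibration $j:\HH\ito\HH^f$ followed by a trivial fibration $p:\HH^f\onto\Iso$. Because $\HH$ is fibrant, $j$ admits a retraction $r:\HH^f\to\HH$, obtained by lifting $\mathrm{id}_\HH$ along $j$ against the trivial cofibration $j$. Then, since $\GG$ is cofibrant and $p$ is a trivial fibration, lift $g$ along $p$ to obtain $\tilde g:\GG\to\HH^f$; by the 2-out-of-3 property $\tilde g$ is a weak equivalence, and hence so is the composite $f=r\circ\tilde g:\GG\to\HH$, a weak equivalence between cofibrant objects of $\VCat_{\{0,1\}}$.

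Finally, as in the proof of Lemma \ref{combinatorial}, Brown's lemma applied to $f$ factors it as a trivial cofibration $j':\GG\ito\KK$ followed by a retraction $r':\KK\to\HH$ of a trivial cofibration $i:\HH\ito\KK$. This is precisely the diagram required by Definition \ref{interval}, and so the singleton $\{\GG\}$ is a generating set of $\Vv$-intervals.

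The only place the hypothesis enters essentially is the fibrancy of $\HH$, and this is also the sole subtle point: without it one would only obtain a weak equivalence from $\GG$ to a fibrant replacement $\HH^f$ strictly larger than $\HH$, yielding a zig-zag rather than an honest map $\GG\to\HH$, and Brown's lemma could not then be used to produce the retraction onto $\HH$ demanded by Definition \ref{interval}. Everything else is a routine manipulation of the standard factorisations and the retract/lifting axioms.
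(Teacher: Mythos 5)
Your proof is correct and uses the same ingredients as the paper's own argument --- fibrancy of $\HH$ in $\VCat_{\{0,1\}}$ (coming from fibrancy of all objects of $\Vv$), a lift of a cofibrant object against a trivial fibration, and Brown's factorisation lemma --- merely applied in a different order: the paper fibrantly replaces $\GG$, lifts $\HH\to\Iso$ to $\HH\to\tilde\GG$, applies Brown's lemma to that map, and only at the very end uses fibrancy of $\HH$ to retract $\KK$ onto $\HH$, whereas you use fibrancy of $\HH$ first to manufacture a direct weak equivalence $\GG\to\HH$ and then let Brown's lemma supply both the trivial extension and the retraction in one stroke. Both routes are equally valid and of essentially the same length, so this counts as the same approach.
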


\begin{proof}Since all objects of $\,\Vv$ are fibrant, a $\Vv$-interval $\GG$ consists of a factorisation of the canonical inclusion $\{0,1\}\to\Iso$ into a cofibration $\{0,1\}\ito\GG$ followed by a weak equivalence $\GG\eqv\Iso$. We take any such $\GG$ as generating $\Vv$-interval.

We shall now realise an arbitrary $\Vv$-interval $\HH$ as retract of a trivial extension of $\GG$, cf. Definition \ref{interval}. Indeed, factor the weak equivalence $\GG\eqv\Iso$ into a trivial cofibration $\GG\overset{\sim}{\ito}\tilde{\GG}$ followed by a trivial fibration $\tilde{\GG}\overset{\sim}{\onto}\Iso$. Then, by cofibrancy of $\HH$ there is a lift $\HH\to\tilde{\GG}$. Factor this weak equivalence between cofibrant objects of $\Vv-\Cat_{\{0,1\}}$ (according to Brown's Lemma) into a trivial cofibration $j:\HH\to\KK$ followed by a retraction $\KK\to\tilde{\GG}$ of a trivial cofibration $\tilde{\GG}\overset{\sim}{\ito}\KK$. This yields the trivial extension $\GG\overset{\sim}{\ito}\tilde{\GG}\overset{\sim}{\ito}\KK$ while $j:\HH\to\KK$ admits a retraction $r:\KK\to\HH$, since $\HH$ is fibrant and $j$ a trivial cofibration.\end{proof}

\begin{stit}\textbf{Fibrations and weak equivalences in $\VCat$.}\label{fibration}\vspace{1ex}

\noindent A $\Vv$-functor $F:\AA\to\BB$ is said to be

\begin{itemize}
\item \emph{path-lifting} if it has the right lifting property with respect to $\{i\}\to\HH,\,i=0,1,$ for any $\Vv$-interval $\HH$;
\item \emph{essentially surjective} if for any object $b:\{1\}\to\BB$ there is an object $a:\{0\}\to\AA$ and a $\Vv$-interval $\HH$ together with a commutative diagram\begin{diagram}[small,UO]\{0\}&&\rTo^a&&\AA\\&\rdTo&&&\\&&\HH&&\dTo_F\\&\ruTo&&\rdTo&\\\{1\}&&\rTo_b&&\BB\end{diagram}in $\VCat$;\item \emph{a fibration} if it is a path-lifting local fibration;\item \emph{a weak equivalence} if it is an essentially surjective local weak equivalence.\end{itemize}\end{stit}

As usual, a \emph{trivial fibration} is defined to be a $\Vv$-functor which is both a fibration and a weak equivalence. A \emph{local trivial fibration} is defined to be a $\Vv$-functor which is both a local fibration and a local weak equivalence. A $\Vv$-category is \emph{(locally) fibrant} if the unique functor to the terminal $\Vv$-category is a (local) fibration.

\begin{lma}A locally fibrant $\Vv$-category is fibrant.\end{lma}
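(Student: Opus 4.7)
The plan is to show that for locally fibrant $\AA$ the canonical functor $\AA\to 1$ to the terminal $\Vv$-category is a fibration, which by definition means a path-lifting local fibration. That it is a local fibration is immediate from the definition of local fibrancy: each $\AA(x,y)$ is fibrant in $\Vv$, and the corresponding map to the unit hom-object of $1$ is then a fibration. The entire content therefore lies in the path-lifting condition, i.e.\ in extending any given object $a:\{0\}\to\AA$ to a $\Vv$-functor $\HH\to\AA$ for every $\Vv$-interval $\HH$ (the argument for the endpoint $1$ of $\HH$ will be symmetric).

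My approach would be to introduce the ``constant at $a$'' $\Vv$-category $\AA_a\in\VCat_{\{0,1\}}$ whose four hom-objects all equal $\AA(a,a)$, with composition and units inherited from $\AA$. The obvious collapse $\Vv$-functor $\AA_a\to\AA$ sends both $0$ and $1$ to $a$ and is the identity on homs, so any $\Vv$-functor $\HH\to\AA_a$ in $\VCat_{\{0,1\}}$ will compose with it to a $\Vv$-functor $\HH\to\AA$ solving the lifting problem simultaneously for $i=0$ and $i=1$. By local fibrancy of $\AA$ the hom $\AA(a,a)$ is fibrant in $\Vv$, and since the model structure on $\VCat_{\{0,1\}}$ is transferred, $\AA_a$ is then fibrant in $\VCat_{\{0,1\}}$.

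To produce the desired $\Vv$-functor $\HH\to\AA_a$, I would use the unit $I_\Vv\to\AA(a,a)$ of $\AA$ to define the canonical ``constant'' $\Vv$-functor $\Iso\to\AA_a$. Taking the fibrant replacement as a trivial cofibration $\Iso\eqv\wIso$ and invoking fibrancy of $\AA_a$ produces a lift $\wIso\to\AA_a$, which I would precompose with the weak equivalence $\HH\eqv\wIso$ built into the definition of a $\Vv$-interval to obtain $\HH\to\AA_a$, and hence the required $\HH\to\AA$ sending $0$ (and $1$) to $a$.

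I do not expect a substantive obstacle; the argument is a direct unpacking of the definitions. The one mild subtlety worth flagging is that, by construction, a $\Vv$-interval $\HH$ comes with an actual direct weak equivalence $\HH\eqv\wIso$ to the chosen fibrant replacement rather than merely a zigzag — this is precisely what makes a single application of the right-lifting property against the trivial cofibration $\Iso\eqv\wIso$ sufficient, and avoids any detour through Brown's lemma.
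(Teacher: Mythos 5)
Your argument is correct and is essentially the paper's own proof: the paper likewise extends the constant functor $\bar a:\Iso\to\AA$ along the trivial cofibration $\Iso\to\Iso_f$ (implicitly using that the pullback of $\AA$ along the constant object map, your $\AA_a$, is fibrant in $\VCat_{\{0,1\}}$) and then precomposes with the structural weak equivalence $\HH\to\Iso_f$. Your version merely makes the intermediate object $\AA_a=c^*\AA$ and the transfer argument for its fibrancy explicit.
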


\begin{proof}We have to show that a local fibration with values in a terminal $\,\Vv$-category is automatically path-lifting; or, what amounts to the same, that any object-map $a:\{0\}\to\AA$ for a locally fibrant $\Vv$-category $\AA$ extends to any $\Vv$-interval $\HH$. It is obvious that $a$ extends to a $\Vv$-functor $\bar{a}:\Iso\to\AA$ such that $\bar{a}(0)=\bar{a}(1)=a(0)$. Since $\AA$ is fibrant in $\VCat_{\{0,1\}}$, $\bar{a}$ extends to a fibrant replacement $\Iso_f$ of $\Iso$. It suffices now to precompose this extension with the given weak equivalence $\HH\to\Iso_f$.\end{proof}

\begin{lma}\label{surjective}A $\Vv$-functor is a trivial fibration if and only if it is a local trivial fibration which is surjective on objects.\end{lma}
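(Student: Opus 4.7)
My plan is to unpack each condition against the definitions in \ref{fibration} and handle the two directions separately. For the forward implication, a trivial fibration is by definition a local fibration and a local weak equivalence, so only surjectivity on objects needs attention. Given $b\in\Ob(\BB)$, essential surjectivity of $F$ produces a $\Vv$-interval $\HH$, an object $a\in\Ob(\AA)$, and a $\Vv$-functor $h:\HH\to\BB$ with $h(0)=F(a)$ and $h(1)=b$. Applying the path-lifting property of $F$ to the map $a:\{0\}\to\AA$ together with $h:\HH\to\BB$ yields a $\Vv$-functor $\tilde h:\HH\to\AA$ with $\tilde h(0)=a$ and $F\tilde h=h$; then $F(\tilde h(1))=h(1)=b$, so $b$ lies in the image of $F$ on objects.

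For the reverse implication, $F$ is already a local fibration and a local weak equivalence by hypothesis. Essential surjectivity follows quickly: given $b\in\Ob(\BB)$, surjectivity on objects supplies $a\in\Ob(\AA)$ with $F(a)=b$, and I can factor the canonical map $\{0,1\}\to\Iso$ in $\VCat_{\{0,1\}}$ as a cofibration $\{0,1\}\ito\GG$ followed by a trivial fibration $\GG\eqv\Iso$, obtaining a $\Vv$-interval $\GG$. The composite $\GG\eqv\Iso\to\BB$, in which the second $\Vv$-functor sends both objects to $b$ via the unit $I_\Vv\to\BB(b,b)$ on each of the four hom-objects, assembles into the diagram required by \ref{fibration}.

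The substantive step is verifying the path-lifting property. Given a commutative square with top map $a:\{i\}\to\AA$ and bottom map $h:\HH\to\BB$ for some $\Vv$-interval $\HH$ and $i\in\{0,1\}$, surjectivity on objects lets me pick $a'\in\Ob(\AA)$ with $F(a')=h(1-i)$. Restricting $F$ to the full sub-$\Vv$-categories $\AA'\subset\AA$ on $\{a,a'\}$ and $\BB'\subset\BB$ on $\{h(0),h(1)\}$ yields a map $F':\AA'\to\BB'$ in $\VCat_{\{0,1\}}$. Since the transferred model structure on $\VCat_{\{0,1\}}$ detects fibrations and weak equivalences hom-wise, the local trivial fibration hypothesis on $F$ forces $F'$ to be a trivial fibration in $\VCat_{\{0,1\}}$. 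Because $\HH$ is cofibrant in $\VCat_{\{0,1\}}$ by definition of a $\Vv$-interval, the lifting square with $\{0,1\}\ito\HH$ on the left and $F'$ on the right admits a solution $\HH\to\AA'\subset\AA$, which is the desired path-lift. The principal subtlety is that the lifting problem has to be transported from $\VCat$ into $\VCat_{\{0,1\}}$ with correctly matched object assignments, and this is precisely why surjectivity on objects is invoked at the outset to choose $a'$.
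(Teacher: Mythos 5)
Your proof is correct and follows essentially the same route as the paper: the forward direction is the paper's observation that path-lifting plus essential surjectivity gives surjectivity on objects, and the reverse direction reduces the path-lifting problem to a lifting problem in $\VCat_{\{0,1\}}$ against the cofibration $\{0,1\}\ito\HH$. The only imprecision is your use of ``full sub-$\Vv$-categories'' on $\{a,a'\}$ and $\{h(0),h(1)\}$, which fails to be an object of $\VCat_{\{0,1\}}$ when $h(0)=h(1)$ (or $a=a'$); one should instead pull back along the object maps $g:\{0,1\}\to\Ob(\AA)$ and $Fg$ (i.e.\ use $g^*\AA\to g^*F^*\BB$), which is still a trivial fibration in $\VCat_{\{0,1\}}$ and makes the rest of your argument go through verbatim.
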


\begin{proof}The implication from left to right follows from the observation that a path-lifting and essentially surjective $\Vv$-functor is surjective on objects. For the implication from right to left we have to show that a local trivial fibration, which is surjective on objects, is essentially surjective and path-lifting.

The essential surjectivity follows by constructing a diagram like in Definition \ref{fibration} with $\HH$ replaced by $\Iso$, and precomposing it with a cofibrant replacement of $\Iso$ in $\VCat_{\{0,1\}}$. For the path-lifting property, given a map $b:\HH\to\BB$ and an object in $\AA$ over $b(0)$, we first use the surjectivity of the $\Vv$-functor $\AA\to\BB$ to also find an object over $b(1)$, and then use the left lifting property of the cofibration $\{0,1\}\to\HH$ with respect to $\AA\to\BB$ (cf. Remark \ref{cofibration0}) to obtain the required lift $\HH\to\AA$.\end{proof}

In view of the preceding two lemmas, the first part of Theorem \ref{canonical} can now be stated more explicitly as follows:

\begin{thm}\label{modelstructure}Let $\,\Vv$ be a right proper, adequate monoidal model category with cofibrant unit and a generating set of $\,\Vv$-intervals. Then $\VCat$ is a cofibrantly generated model category in which the weak equivalences are the essentially surjective local weak equivalences and the fibrations are the path-lifting local fibrations.\end{thm}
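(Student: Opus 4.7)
The plan is to apply Kan's recognition theorem for cofibrantly generated model categories, using generating sets $I$ and $J$ built from those of the fixed-object-set categories $\VCat_S$ (Proposition \ref{VCatS}) and augmented by the data that detect essential surjectivity and path-lifting.

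I would take $I$ to consist of the object-adjoining map $\emptyset \ito \{*\}$ together with the ``free category'' image in $\VCat_{\{0,1\}}$ of a generating set of cofibrations of $\Vv$: each generating cofibration $f:X\to Y$ in $\Vv$ produces a map $F_X\to F_Y$ of free $\Vv$-categories on $\{0,1\}$ with $X$ resp.\ $Y$ as the only non-unit hom. For $J$, I would take the analogous trivial-cofibration version, together with the endpoint inclusions $\{i\}\to\GG$ ($i=0,1$) as $\GG$ ranges over a chosen generating set $\Gg$ of $\Vv$-intervals. Remark \ref{cofibration0} guarantees that the $I$-cells are cofibrations in the prospective structure; combined with Lemma \ref{surjective} this identifies the $I$-injectives with the trivial fibrations. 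Path-lifting against $J$ extends to path-lifting against every $\Vv$-interval, because by Definition \ref{interval} any such is a retract of a trivial extension of some $\GG\in\Gg$, and local fibrations have the right lifting property with respect to trivial cofibrations in $\VCat_{\{0,1\}}$; thus $J$-inj coincides with the prospective class of fibrations.

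Smallness of the (co)domains of $I$ and $J$ follows from $\otimes$-smallness of every object of $\Vv$ (by compact generation of $\Vv$): relative $I$-cell complexes in $\VCat$ are built, object-set apart, out of transfinite compositions of pushouts along $\otimes$-cofibrations in $\Vv$, so the small object argument applies. This also shows that any $J$-cell is an $\otimes$-cofibration on hom-objects, which together with Lemma \ref{transfinite} reduces the verification that $J$-cells are weak equivalences to the case of a \emph{single} pushout along a map in $J$.

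The heart of the matter, and where the Interval Cofibrancy Theorem \ref{cofibrant} and the Interval Amalgamation Lemma \ref{IAL} are decisive, is showing that a single pushout along an endpoint inclusion $\{0\}\ito\GG$ is a local weak equivalence. Pushouts along the trivial-cofibration part of $J$ are local weak equivalences by Proposition \ref{VCatS}. For a pushout along $\{0\}\ito\GG$ attached via $F:\GG\to\AA$ with $F(0)=a$, essential surjectivity is immediate since a new object is adjoined and linked to $a$ by the $\Vv$-interval $\GG$. To see that each hom-object $\AA(x,y)\to\AA'(x,y)$ between existing objects is a weak equivalence, one expresses $\AA'$ as a bar-type amalgamated construction built from $\AA(-,a)$, the bimodules $\GG(i,j)$, and $\AA(a,-)$; the Interval Cofibrancy Theorem provides cofibrancy of each $\GG(i,j)$ as a bimodule over the cofibrant monoids $\GG(0,0),\GG(1,1)$, making this amalgamated construction homotopy invariant, and right properness of $\Vv$ then allows one to compare the result to the corresponding construction with $\GG$ replaced by $\Iso$, where it reduces to a mere duplication of $a$ and is an isomorphism on the hom-objects through existing objects. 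The Interval Amalgamation Lemma ensures that the argument iterates correctly through transfinite stages: after each batch of $J$-attachments the amalgamated structure between any pair of newly created objects is again (cofibrantly replaced by) a $\Vv$-interval, so the same bar-analysis applies at the next stage. Closure of weak-equivalence $\otimes$-cofibrations under transfinite composition (Lemma \ref{transfinite}) finally produces the desired weak equivalence in the colimit, and Kan's recognition theorem yields the stated cofibrantly generated model structure. The main obstacle is precisely this last paragraph: all the technical content of Section \ref{s3} is required to control the homotopy type of hom-objects under iterated interval attachments.
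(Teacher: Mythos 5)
Your skeleton (the generating sets $I$ and $J$, the identification of $I$-injectives with trivial fibrations via Lemma \ref{surjective}, smallness via $\otimes$-smallness, and the reduction of the trivial-cofibration saturation to single pushouts using Lemma \ref{transfinite}) matches the paper's proof. But there is a genuine gap: Kan's recognition theorem requires as a \emph{hypothesis} that the class of weak equivalences satisfies the 2-out-of-3 property and is closed under retracts, and for the class of essentially surjective local weak equivalences this is not automatic — it is Proposition \ref{2outof3}, whose proof occupies Lemmas \ref{transitivity} through \ref{homotopyequivalence2}. This is where right properness and the Interval Amalgamation Lemma actually enter: the IAL gives transitivity of the equivalence relation on objects (hence essential surjectivity of composites), while right properness, via Lemmas \ref{locallyfibrant} and \ref{rightproper}, shows that a local weak equivalence reflects equivalence of objects (needed when $G$ and $GF$ are weak equivalences). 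Your proposal never verifies CM2, and it misassigns both of these tools to the pushout analysis, where neither is needed: the transfinite stages are controlled by Lemma \ref{transfinite} applied hom-object-wise (each single pushout being a local $\otimes$-cofibration), with no need for the hom-objects between newly created objects to form intervals.

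Your treatment of the crucial single pushout along $\{0\}\ito\GG$ is also not viable as stated. A ``bar-type amalgamated construction'' of all hom-objects of the pushout $\BB$ would require an explicit analysis on the scale of Section \ref{proofICT}, and right properness (a statement about pullbacks) does not let you compare pushouts along $\GG$ with pushouts along $\Iso$. The paper avoids this entirely by factoring $\{0\}\to\GG$ through the one-object category $\GG_{0,0}$ on the monoid $\GG(0,0)$. The first pushout, along $\{0\}\to\GG_{0,0}$, lives in $\VCat_{\Ob(\AA)}$, and the Interval Cofibrancy Theorem \ref{cofibrant} says exactly that $\GG(0,0)$ is a weakly contractible \emph{cofibrant} monoid, so this is a trivial cofibration there and hence a local weak equivalence and local $\otimes$-cofibration. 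The second pushout is along $\GG_{0,0}\to\GG$, which is fully faithful and injective on objects; by the purely algebraic behaviour of such pushouts in $\VCat$, the resulting $\Vv$-functor induces \emph{isomorphisms} on the hom-objects between pre-existing objects, so nothing remains to be computed — only essential surjectivity of this step needs checking, and that is witnessed by $\GG$ itself. You should replace your bar-resolution paragraph by this decomposition, and add the verification of Proposition \ref{2outof3} (with the IAL and right properness in their correct roles) to complete the argument.
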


Before embarking on the proof in Section \ref{proof} below we establish some lemmas.

\begin{dfn}\label{defequivalence}Two objects $a_0,a_1$ of a $\Vv$-category $\,\AA$ are \emph{equivalent} if there exists a $\Vv$-interval $\,\HH$ and a $\Vv$-functor $\gamma:\HH\to\AA$ such that $\gamma(0)=a_0$ and $\gamma(1)=a_1$.

They are \emph{virtually equivalent} if they become $\Vv$-equivalent in some fibrant replacement $\AA_f$ of $\AA$ in $\VCat_{\Ob(\AA)}$.

They are \emph{homotopy equivalent} if there exist maps $\alpha:I_\Vv\to\AA_f(a_0,a_1)$ and $\beta:I_\Vv\to\AA_f(a_1,a_0)$ such that $\beta\alpha:I_\Vv\to\AA_f(a_0,a_0)$ (resp. $\alpha\beta:I_\Vv\to\AA_f(a_1,a_1)$) is homotopic to the arrow $I_\Vv\to\AA_f(a_0,a_0)$ (resp. $I_\Vv\to\AA_f(a_1,a_1)$) given by the identity of $\,a_0$ (resp. $\,a_1$).\end{dfn}

\begin{rmk}\label{pi0}Note that virtual (resp. homotopy) equivalence of objects in $\AA$ does not depend on the choice of the fibrant replacement $\AA_f$ of $\AA$ in $\VCat_{\Ob(\AA)}$. Note also that any $\Vv$-functor $\AA\to\BB$ takes (virtually, resp. homotopy) equivalent objects of $\AA$ to (virtually, resp. homotopy) equivalent objects in $\BB$.

Given a $\,\Vv$-category $\AA$, one can define an ordinary category $\pi_0(\AA)$ having the same objects as $\AA$, and with morphism-sets defined by$$\pi_0(\AA)(x,y)=\Ho(\Vv)(I_\Vv,\AA(x,y))=[I_\Vv,\AA_f(x,y)],$$(the latter identification with sets of homotopy classes uses the assumption that the unit of $\Vv$ is cofibrant). Then $x$ and $y$ are \emph{homotopy equivalent} in $\AA$ if and only if they become \emph{isomorphic} in $\pi_0(\AA)$.\end{rmk}

\begin{lma}\label{transitivity}For any $\Vv$-category $\AA$, equivalence and virtual equivalence are equivalence relations on the object set of $\AA$.\end{lma}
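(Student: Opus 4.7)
The plan is to verify reflexivity, symmetry, and transitivity of equivalence on $\Ob(\AA)$ separately, and then to deduce the three corresponding properties of virtual equivalence by applying the same arguments inside a fibrant replacement $\AA_f$. For reflexivity, given $a\in\Ob(\AA)$, I would take a cofibrant replacement $Q\Iso\to\Iso$ in $\VCat_{\{0,1\}}$ (available by Proposition \ref{VCatS}); then $Q\Iso$ is cofibrant and weakly equivalent to $\Iso$, hence (composing with $\Iso\to\wIso$) to $\wIso$, so it qualifies as a $\Vv$-interval. Composing $Q\Iso\to\Iso$ with the constant $\Vv$-functor $c_a:\Iso\to\AA$, which sends both objects to $a$ and both non-trivial hom maps to the unit of $a$, yields a witness that $a\sim a$.

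Symmetry comes from the involution $\sigma$ of $\{0,1\}$: the pullback endofunctor $\sigma^*$ of $\VCat_{\{0,1\}}$, given by $(\sigma^*\BB)(i,j)=\BB(\sigma i,\sigma j)$, is an auto-equivalence of the transferred model structure and fixes $\Iso$, hence it sends $\Vv$-intervals to $\Vv$-intervals. If $\gamma:\HH\to\AA$ witnesses $a_0\sim a_1$, the composite $\sigma^*\HH\to\HH\xrightarrow{\gamma}\AA$ (the first map being the canonical relabeling in $\VCat$, with object map $\sigma$ and hom maps the identity) witnesses $a_1\sim a_0$. For transitivity, suppose $\gamma_i:\HH_i\to\AA$ ($i=1,2$) witness $a_0\sim a_1$ and $a_1\sim a_2$. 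Forming the pushout in $\VCat$ that identifies $1\in\HH_1$ with $0\in\HH_2$, the universal property yields a $\Vv$-functor from the three-object pushout to $\AA$ sending the glued middle object to $a_1$. Restricting to the full sub-$\Vv$-category on the outer objects gives a $\Vv$-functor $\HH_1*\HH_2\to\AA$ in $\VCat_{\{0,1\}}$ with $0\mapsto a_0$ and $1\mapsto a_2$. A cofibrant replacement $Q(\HH_1*\HH_2)\to\HH_1*\HH_2$ is then a $\Vv$-interval by the Interval Amalgamation Lemma \ref{IAL}, and its composition with the map to $\AA$ witnesses $a_0\sim a_2$.

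For virtual equivalence, the same three arguments apply verbatim with $\AA$ replaced by a fibrant replacement $\AA_f\in\VCat_{\Ob(\AA)}$, yielding an equivalence relation on $\Ob(\AA_f)=\Ob(\AA)$; independence from the choice of $\AA_f$ is the content of Remark \ref{pi0}. The main obstacle is transitivity, which depends entirely on Lemma \ref{IAL}: without it, the amalgamation of two $\Vv$-intervals is not a priori cofibrant (nor obviously weakly equivalent to $\Iso$), so one cannot transport the concatenated $\Vv$-functor to a witness living on a $\Vv$-interval. Reflexivity and symmetry, by contrast, are essentially formal, requiring only the existence of the transferred model structure on $\VCat_{\{0,1\}}$ and the $\sigma$-invariance of $\Iso$.
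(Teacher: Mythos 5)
Your proof is correct and follows essentially the same route as the paper: reflexivity via a cofibrant replacement of $\Iso$ composed with the constant functor at $a$, symmetry via the swap of $0$ and $1$ (which the paper dismisses as obvious), and transitivity via the amalgamation construction together with the Interval Amalgamation Lemma \ref{IAL}, with virtual equivalence handled by running the same arguments in a fibrant replacement. You are merely more explicit than the paper about the symmetry step and about passing to a cofibrant replacement of $\HH_1*\HH_2$, both of which are exactly what the paper intends.
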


\begin{proof}Symmetry is obvious. For the reflexivity, observe that for any object $a_0$ of $\AA$ there is a canonical map $\Iso\to\AA$ witnessing that the identity of $a_0$ is an isomorphism; precomposing this map with a cofibrant replacement $\Iso_c\to\Iso$ in $\VCat_{\{0,1\}}$ yields the required self-equivalence of $a_0$. The non-trivial part of the proof concerns transitivity which follows from the Interval Amalgamation Lemma \ref{IAL}.\end{proof}

\begin{lma}\label{locallyfibrant}A local weak equivalence $F:\AA\to\BB$ reflects virtual equivalence of objects, i.e. if $Fa_0$ and $Fa_1$ are virtually equivalent in $\BB$, then $a_0$ and $a_1$ are virtually equivalent in $\AA$.\end{lma}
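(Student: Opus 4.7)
By hypothesis, there exist a fibrant replacement $\BB\hookrightarrow\BB_f$ in $\VCat_{\Ob(\BB)}$, a $\Vv$-interval $\HH$, and a $\Vv$-functor $\gamma\colon\HH\to\BB_f$ with $\gamma(0)=Fa_0$, $\gamma(1)=Fa_1$. I would first pick a fibrant replacement $\AA\hookrightarrow\AA_f$ in $\VCat_{\Ob(\AA)}$ and extend $F$ to a $\Vv$-functor $F_f\colon\AA_f\to\BB_f$: regarding the composite $\AA\to\BB\hookrightarrow\BB_f$ as a map $\AA\to F^*\BB_f$ in $\VCat_{\Ob(\AA)}$ (via the object-map of $F$) lets me lift the trivial cofibration $\AA\hookrightarrow\AA_f$ against the fibrant object $F^*\BB_f$. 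Hom-by-hom two-out-of-three then forces $F_f$ to be a local weak equivalence, since $F$, $\AA\hookrightarrow\AA_f$, and $\BB\hookrightarrow\BB_f$ all are.

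The plan is now to pull the situation back to $\VCat_{\{0,1\}}$, where the transferred model structure of Proposition~\ref{VCatS} applies and the interval $\HH$ is cofibrant. Using the pointings $f\colon\{0,1\}\to\Ob(\AA)$, $i\mapsto a_i$, and $g\colon\{0,1\}\to\Ob(\BB)$, $i\mapsto Fa_i$, form the pulled-back $\Vv$-categories $f^*\AA_f$ and $g^*\BB_f$ on $\{0,1\}$ with hom-objects $\AA_f(a_i,a_j)$ and $\BB_f(Fa_i,Fa_j)$ respectively; both are fibrant in $\VCat_{\{0,1\}}$ because $f^*$ and $g^*$ are right Quillen. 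The functor $F_f$ induces a (local) weak equivalence $\bar F\colon f^*\AA_f\to g^*\BB_f$, and $\gamma$ corresponds to a $\VCat_{\{0,1\}}$-morphism $\bar\gamma\colon\HH\to g^*\BB_f$. Factor $\bar F$ as a trivial cofibration $j\colon f^*\AA_f\hookrightarrow C$ followed by a fibration $p\colon C\to g^*\BB_f$; by two-out-of-three, $p$ is trivial. Cofibrancy of $\HH$ in $\VCat_{\{0,1\}}$ lifts $\bar\gamma$ through $p$ to $\tilde\gamma\colon\HH\to C$, and fibrancy of $f^*\AA_f$ provides a retraction $r\colon C\to f^*\AA_f$ of $j$. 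Composing $r\circ\tilde\gamma$ and transporting back along the canonical $\Vv$-functor $f^*\AA_f\to\AA_f$ gives a $\Vv$-functor $\HH\to\AA_f$ sending $0\mapsto a_0$ and $1\mapsto a_1$, witnessing virtual equivalence of $a_0$ and $a_1$ in $\AA$.

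The main bookkeeping subtlety is the degenerate case $Fa_0=Fa_1$: the full $\Vv$-subcategory of $\BB_f$ on the image then has only a single object and so cannot carry the two endpoints of an interval directly. Routing through the pullback $g^*\BB_f$ rather than a full subcategory keeps the two object-slots formally separate and handles this case uniformly with the generic one. Beyond this, the argument is a standard lifting-and-retraction in $\VCat_{\{0,1\}}$, enabled by the cofibrancy of $\Vv$-intervals there.
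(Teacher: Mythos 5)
Your proof is correct, and its overall strategy is the same as the paper's: produce a fibrant replacement $\AA_f$ together with a local weak equivalence $\AA_f\to\BB_f$ compatible with $F$, and then lift the interval $\gamma:\HH\to\BB_f$ using the cofibrancy of $\HH$ in $\VCat_{\{0,1\}}$. The one place where you diverge is in how the lift is executed. The paper does not choose $\AA_f$ in advance: it \emph{defines} $\AA_f$ by factoring $\AA\to F^*(\BB_f)$ into a weak equivalence followed by a fibration in $\VCat_{\Ob(\AA)}$, so that the composite $\AA_f\to F^*(\BB_f)\to\BB_f$ is a local \emph{trivial} fibration on the nose, and the interval lifts through it in one step. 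You instead fix an arbitrary fibrant replacement $\AA_f$, extend $F$ to $F_f$ by lifting against $F^*\BB_f$, and then, since $\bar F:f^*\AA_f\to g^*\BB_f$ is merely a weak equivalence between fibrant objects of $\VCat_{\{0,1\}}$, you pay for this with the extra factorization $\bar F=p\circ j$ plus the retraction of the trivial cofibration $j$. Both routes are sound; the paper's is marginally more economical (one lifting instead of two and no retraction), while yours has the mild advantage of working with any preassigned fibrant replacement of $\AA$ and of making the reduction to $\VCat_{\{0,1\}}$ completely explicit, which, as you note, also disposes of the degenerate case $Fa_0=Fa_1$ transparently.
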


\begin{proof}Choose first a fibrant replacement $i_\BB:\BB\eqv\BB_f$ in $\VCat_{\Ob(\BB)}$. Next pull back $i_\BB$ along $F:\Ob(\AA)\to\Ob(\BB)$ to get the following diagram
\begin{diagram}[small]\AA&\rDashto^{i_\AA}&\AA_f\\\dTo^\alpha&&\dDashto_{\alpha'}\\F^*(\BB)&\rTo^{F^*(i_\BB)}&F^*(\BB_f)\\\dTo^\beta&&\dTo_{\beta'}\\\BB&\rTo_{i_\BB}&\BB_f\end{diagram}in which the broken arrows are defined by factoring $F^*(i_\BB)\alpha:\AA\to F^*(\BB_f)$ into a weak equivalence followed by a fibration in $\VCat_{\Ob(\AA)}$. By construction, $i_\AA,\alpha$ and $F^*(i_\BB)$ are local weak equivalences, hence so is $\alpha'$. Since $\beta$ and $\beta'$ induce isomorphisms on hom-objects, this implies that $\beta'\alpha':\AA_f\to\BB_f$ is a local trivial fibration. Therefore, any virtual equivalence $\gamma:\HH\to\BB_f$ between $Fa_0$ and $Fa_1$ can be lifted to a virtual equivalence $\tilde{\gamma}:\HH\to\AA_f$ between $a_0$ and $a_1$.\end{proof}

\begin{lma}\label{rightproper}If $\,\Vv$ is right proper then for any $\Vv$-category $\AA$, virtually equivalent objects of $\AA$ are equivalent.\end{lma}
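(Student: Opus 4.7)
The plan is to convert the virtual equivalence, witnessed in a fibrant replacement, into an actual equivalence in $\AA$ by a pullback-and-cofibrant-replacement argument, where right properness of $\VCat_{\{0,1\}}$ does the decisive work.

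Concretely, suppose $a_0,a_1$ are virtually equivalent, so there is a fibrant replacement $\iota:\AA\eqv\AA_f$ in $\VCat_{\Ob(\AA)}$, a $\Vv$-interval $\HH$, and a $\Vv$-functor $\gamma:\HH\to\AA_f$ with $\gamma(0)=a_0$, $\gamma(1)=a_1$. Let $\tilde{\AA}$ and $\tilde{\AA}_f$ be the full $\Vv$-subcategories on $\{a_0,a_1\}$, viewed as objects of $\VCat_{\{0,1\}}$ via $a_i\leftrightarrow i$. The restriction $\tilde\iota:\tilde\AA\to\tilde\AA_f$ is still a local weak equivalence, i.e. a weak equivalence in $\VCat_{\{0,1\}}$, and $\gamma$ factors through $\tilde\AA_f$.

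Now I would factor $\gamma:\HH\to\tilde\AA_f$ in $\VCat_{\{0,1\}}$ as a trivial cofibration $\HH\overset{\sim}{\ito}\HH'$ followed by a fibration $p:\HH'\onto\tilde\AA_f$. Since $\HH$ is cofibrant and weakly equivalent to $\Iso$, so is $\HH'$; hence $\HH'$ is again a $\Vv$-interval sending $0,1$ to $a_0,a_1$. Form the pullback in $\VCat_{\{0,1\}}$
\begin{diagram}[small]
P&\rTo&\tilde\AA\\
\dTo&&\dTo_{\tilde\iota}\\
\HH'&\rTo_{p}&\tilde\AA_f
\end{diagram}
By Proposition \ref{VCatS}, $\VCat_{\{0,1\}}$ is right proper (since $\Vv$ is), and pullbacks in $\VCat_{\{0,1\}}$ are computed hom-objectwise; hence pulling the weak equivalence $\tilde\iota$ back along the fibration $p$ gives a weak equivalence $P\eqv\HH'$. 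Choose a cofibrant replacement $q:P^c\eqv P$ in $\VCat_{\{0,1\}}$. Then $P^c$ is cofibrant and weakly equivalent to $\HH'$, hence to $\Iso$, so $P^c$ is a $\Vv$-interval on $\{0,1\}$. The composite
\[
P^c\overset{q}{\lrto}P\lrto\tilde\AA\ito\AA
\]
sends $0\mapsto a_0$ and $1\mapsto a_1$, and therefore exhibits $a_0$ and $a_1$ as equivalent objects of $\AA$ in the sense of Definition \ref{defequivalence}.

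The only place right properness is really used is the pullback step; everything else is the standard manipulation of cofibrant/fibrant replacements inside $\VCat_{\{0,1\}}$. The main point to be careful about is that the pullback in $\VCat_{\{0,1\}}$ is indeed hom-objectwise, so right properness of $\Vv$ transfers to $\VCat_{\{0,1\}}$ in the form actually needed; this is already recorded in Proposition \ref{VCatS}. No appeal to the Interval Cofibrancy Theorem or Interval Amalgamation Lemma is required here.
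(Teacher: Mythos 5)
Your proposal is correct and follows essentially the same route as the paper: restrict to the full subcategory on $\{a_0,a_1\}$, factor $\gamma$ as a trivial cofibration followed by a fibration, pull back the local weak equivalence $a^*\AA\to a^*\AA_f$ along that fibration (this is where right properness of $\VCat_{\{0,1\}}$, inherited from $\Vv$, is used), and take a cofibrant replacement of the pullback to obtain the required $\Vv$-interval mapping to $\AA$. Your $\HH'$, $P$, $P^c$ are exactly the paper's $\KK$, $\LL$, $\LL_c$.
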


\begin{proof}We can assume that $a_0,a_1$ are distinct objects of $\AA$, virtually equivalent through $\gamma:\HH\to\AA_f$ for some fibrant replacement $\AA_f$ of $\AA$ in $\VCat_{\Ob(\AA)}$. Pulling back $i_\AA:\AA\eqv\AA_f$ along the object set inclusion $a:\{a_0,a_1\}\to\Ob(\AA)$ we get the following diagram in $\VCat_{\{0,1\}}$:
\begin{diagram}[small]\LL_c&\rTo^\sim&\LL\SEpbk&\rOnto&a^*\AA\\&&\dTo^\sim&&\dTo_{a^*i_\AA}\\\HH&\rTo_\alpha^\sim&\KK&\rOnto_\beta&a^*\AA_f\end{diagram}in which $\beta\alpha$ is obtained by factoring $\gamma:\HH\to a^*\AA_f$ into a trivial cofibration followed by a fibration, $\LL$ is obtained by pullback, and $\LL_c$ is a cofibrant replacement of $\LL$. Since $\alpha:\HH\to\KK$ is a trivial cofibration, the weak equivalence $\HH\overset{\sim}{\to}\wIso$ extends to $\KK$; since $\Vv$ (and hence $\VCat_{\{0,1\}}$) is right proper, the vertical arrow $\LL\to\KK$ is a weak equivalence; therefore, $\LL_c$ is a $\Vv$-interval inducing the required equivalence between $a_0$ and $a_1$.\end{proof}

\begin{lma}\label{homotopyequivalence}In any $\Vv$-category $\AA$, virtually equivalent objects are homotopy equivalent.\end{lma}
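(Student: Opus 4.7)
The plan is to exhibit maps $\alpha,\beta$ witnessing a homotopy equivalence by lifting the canonical identities of $\Iso$ through a fibrant-cofibrant replacement of $\HH$. Suppose $a_0,a_1$ are virtually equivalent, so there is a $\Vv$-interval $\HH$ and a $\Vv$-functor $\gamma\colon\HH\to\AA_f$ with $\gamma(0)=a_0$ and $\gamma(1)=a_1$. Let $a\colon\{0,1\}\to\Ob(\AA)$ be the object map; then the restriction $a^*\AA_f$ is fibrant in $\VCat_{\{0,1\}}$ (since $a^*$ is a right Quillen functor, cf.\ the Quillen pair $a_!\dashv a^*$), and $\gamma$ factors through it.

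First factor the weak equivalence $\HH\eqv\wIso$ defining the interval as a trivial cofibration $j\colon\HH\to\KK$ followed by a trivial fibration $q\colon\KK\to\wIso$ in $\VCat_{\{0,1\}}$, and extend $\gamma$ along $j$ using fibrancy of $a^*\AA_f$ to obtain $\tilde\gamma\colon\KK\to a^*\AA_f$. The $\Vv$-functor $\Iso\to\wIso$ provides canonical maps $\iota_{ij}\colon I_\Vv=\Iso(i,j)\to\wIso(i,j)$ for $i,j\in\{0,1\}$, and since $q$ is a local trivial fibration while $I_\Vv$ is cofibrant, each $\iota_{ij}$ lifts along $\KK(i,j)\to\wIso(i,j)$ to a map $\hat\iota_{ij}\colon I_\Vv\to\KK(i,j)$. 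Set $\alpha=\tilde\gamma_{01}\circ\hat\iota_{01}$ and $\beta=\tilde\gamma_{10}\circ\hat\iota_{10}$, the candidate arrows in $\AA_f(a_0,a_1)$ and $\AA_f(a_1,a_0)$.

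The crux is verifying $\beta\alpha\simeq\mathrm{id}_{a_0}$ in $\AA_f(a_0,a_0)$ (and symmetrically for $\alpha\beta$). Consider two maps $u_0,c\colon I_\Vv\to\KK(0,0)$: the unit $u_0$ of $\KK$ at $0$, and the composite $c=\mathrm{comp}_\KK\circ(\hat\iota_{01}\otimes\hat\iota_{10})$ formed via $I_\Vv\cong I_\Vv\otimes I_\Vv$. Applying $q$, both $q\circ u_0$ and $q\circ c$ equal the unit of $\wIso$ at $0$: the former by preservation of units, the latter because the $\Vv$-functor $\Iso\to\wIso$ preserves composition and the analogous composite in $\Iso$ is literally the identity of $I_\Vv$. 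Since $I_\Vv$ is cofibrant and $\KK(0,0)\to\wIso(0,0)$ is a trivial fibration, the standard lifting argument against a cylinder on $I_\Vv$ shows $u_0$ and $c$ are homotopic in $\KK(0,0)$; applying $\tilde\gamma_{00}$ transports this to a homotopy $\mathrm{id}_{a_0}\simeq\beta\alpha$ in $\AA_f(a_0,a_0)$.

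The main obstacle is bookkeeping rather than conceptual depth: one juggles three models $\HH,\KK,\wIso$ of the interval and must observe that the lifts $\hat\iota_{ij}$ behave well under composition only after $q$ is applied. The cofibrancy of $I_\Vv$---a standing hypothesis of Theorem~\ref{canonical}---enters essentially twice: to construct the lifts $\hat\iota_{ij}$, and to compare $u_0$ with $c$ in the final homotopy argument.
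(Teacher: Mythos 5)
Your proposal is correct and follows essentially the same route as the paper: factor $\HH\eqv\wIso$ into a trivial cofibration followed by a trivial fibration, extend the given functor over the middle term by fibrancy of $\AA_f$, lift the canonical arrows of $\Iso$ back through the trivial fibration (the paper packages this as a lift of $\JJ\to\wIso$ against $\{0,1\}\ito\JJ$, which amounts to exactly your hom-object-wise lifts of $\iota_{01}$ and $\iota_{10}$ using cofibrancy of $I_\Vv$), and then observe that the composite and the unit agree after projecting to $\wIso(0,0)$, hence are homotopic upstairs. Your spelled-out comparison of $u_0$ and $c$ via the trivial fibration $\KK(0,0)\to\wIso(0,0)$ is precisely the justification the paper compresses into ``factors through $\HH'(0,0)$ and is thus homotopic to the identity.''
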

\begin{proof}For any virtually equivalent objects $x,y$ of $\AA$ there exists a fibrant replacement $\AA_f$ of $\AA$ and a $\Vv$-interval $\HH$ together with a $\Vv$-functor $a:\HH\to\AA_f$ representing a path from $x$ to $y$ in $\AA_f$. By definition of a $\Vv$-interval, $\HH$ maps to a fibrant replacement $\wIso$ of $\Iso$ by a weak equivalence. Factor this weak equivalence into a trivial cofibration $\HH\overset{\sim}{\ito}\HH'$ followed by a trivial fibration $\HH'\overset{\sim}{\onto}\wIso$, and then extend $a$ to $a':\HH'\to\AA_f$ because $\AA_f$ is fibrant.

Next, consider the $\Vv$-category $\JJ$ on $\{0,1\}$ representing a single directed arrow, i.e. $\JJ(0,0)=\JJ(0,1)=\JJ(1,1)=I_\Vv$ (the monoidal unit of $\Vv$), but $\JJ(1,0)=\emptyset_\Vv$ (an initial object of $\Vv$) with evident composition law. The object-set inclusion $\{0,1\}\to\wIso$ factors then through $\JJ\to\Iso$ so that we get the following commutative diagram in $\VCat_{\{0,1\}}$\begin{diagram}[small,silent,UO]\{0,1\}&\rTo&\HH'&\rTo^{a'}&\AA_f\\\dTo&\ruDotsto&\dOnto_\sim\\\JJ&\rTo&\wIso\end{diagram}in which the lift $u:\JJ\to\HH'$ exists since $\{0,1\}\to\JJ$ is a cofibration in $\VCat_{\{0,1\}}$. We therefore obtain a $\Vv$-functor $a'u:\JJ\to\AA_f$, hence an arrow $\alpha:I_\Vv\to\AA_f(x,y)$. Interchanging the role of $0$ and $1$, we obtain an arrow $\beta:I_\Vv\to\AA_f(y,x)$. By construction, the composite arrow $\beta\alpha:I_\Vv\to\AA_f(x,x)$ (resp. $\alpha\beta:I_\Vv\to\AA_f(y,y)$) factors through $\HH'(0,0)$ (resp. $\HH'(1,1)$) and is thus homotopic to the arrow given by the identity of $x$ (resp. $y$).\end{proof}

\begin{lma}\label{homotopyequivalence2}Let $a_0,a_1$ (resp. $b_0,b_1$) be homotopy equivalent objects in a $\Vv$-category $\AA$. Then the hom-objects $\AA(a_0,b_0)$ and $\AA(a_1,b_1)$ are related by a zig-zag of weak equivalences in $\Vv$. Moreover, any $\Vv$-functor $F:\AA\to\BB$ induces a functorially related zig-zag of weak equivalences between $\BB(Fa_0,Fb_0)$ and $\BB(Fa_1,Fb_1)$.\end{lma}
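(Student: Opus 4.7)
My approach is to reduce to the locally fibrant case and then build an explicit weak equivalence between $\AA_f(a_0,b_0)$ and $\AA_f(a_1,b_1)$ from the homotopy-equivalence data, showing it is invertible in $\Ho(\Vv)$. Choose a fibrant replacement $i_\AA : \AA \eqv \AA_f$ in $\VCat_{\Ob(\AA)}$. Since $i_\AA$ is a local weak equivalence, the induced maps $\AA(x,y) \to \AA_f(x,y)$ are weak equivalences in $\Vv$, all hom-objects of $\AA_f$ are fibrant in $\Vv$, and by Remark \ref{pi0} the pairs $a_0,a_1$ and $b_0,b_1$ remain homotopy equivalent in $\AA_f$. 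It therefore suffices to construct a single weak equivalence $\phi : \AA_f(a_0,b_0) \to \AA_f(a_1,b_1)$ in $\Vv$, from which the required zig-zag $\AA(a_0,b_0) \to \AA_f(a_0,b_0) \to \AA_f(a_1,b_1) \leftarrow \AA(a_1,b_1)$ of weak equivalences is immediate. Using the homotopy-equivalence arrows $\alpha_a, \beta_a$ (and $\alpha_b, \beta_b$), define
$$\phi = (\alpha_b)_* \circ (\beta_a)^*, \qquad \psi = (\beta_b)_* \circ (\alpha_a)^*$$
in $\Vv$, where $(-)_*$ and $(-)^*$ denote post- and pre-composition through the composition of $\AA_f$. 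By strict associativity, $\psi \circ \phi = (\beta_b\alpha_b)_* \circ (\beta_a\alpha_a)^*$ and $\phi \circ \psi = (\alpha_b\beta_b)_* \circ (\alpha_a\beta_a)^*$.

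The key technical step is the claim that for any fibrant object $X$ of $\Vv$ equipped with an action $M \otimes X \to X$ (or $X \otimes M \to X$) of a monoid $M$, the map $X \cong I_\Vv \otimes X \to M \otimes X \to X$ induced by an arrow $g : I_\Vv \to M$ depends only on the class of $g$ in $\Ho(\Vv)(I_\Vv,M)$. Since $I_\Vv$ is cofibrant by hypothesis, a left homotopy $g \simeq h$ is realised as a map $H : C \to M$ from a cylinder $C$ on $I_\Vv$; taking a cofibrant replacement $q : X^c \to X$, the pushout-product axiom makes $C \otimes X^c$ into a cylinder on $X^c$, and composing $C \otimes X^c \to M \otimes X \to X$ (using $H \otimes q$ and the action) exhibits a left homotopy between the two resulting maps $X^c \to X$, yielding equality in $\Ho(\Vv)(X^c,X) = \Ho(\Vv)(X,X)$. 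Applied to $\beta_b\alpha_b \simeq id_{b_0}$, $\beta_a\alpha_a \simeq id_{a_0}$, $\alpha_b\beta_b \simeq id_{b_1}$, $\alpha_a\beta_a \simeq id_{a_1}$, this gives $[\psi\phi] = [\phi\psi] = [id]$ in $\Ho(\Vv)$, so $\phi$ is invertible there and hence a weak equivalence in $\Vv$. Functoriality under a $\Vv$-functor $F : \AA \to \BB$ follows by choosing a compatible fibrant replacement $F_f : \AA_f \to \BB_f$, as in the proof of Lemma \ref{locallyfibrant}: $F_f$ sends the $\alpha,\beta$ data in $\AA_f$ to the corresponding data in $\BB_f$, and the construction of $\phi$ is natural in these. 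The main obstacle is the cylinder argument in the key claim, where the possible non-cofibrancy of the hom-objects of $\AA_f$ forces us to interpose a cofibrant replacement and invoke the pushout-product axiom in order to transfer the cylinder structure through the action.
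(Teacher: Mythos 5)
Your proposal is correct and follows essentially the same route as the paper: both pass to a fibrant replacement, form $\phi=(\alpha_b)_*(\beta_a)^*$ and $\psi=(\beta_b)_*(\alpha_a)^*$, show they are mutually inverse in $\Ho(\Vv)$, and conclude by the saturation property of weak equivalences, with the functoriality handled via a compatible fibrant replacement $F_f$. Your cylinder/cofibrant-replacement argument simply makes explicit the step the paper summarises as ``it then follows that \dots are mutually inverse isomorphisms in the homotopy category,'' and it is carried out correctly.
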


\begin{proof}By definition, there exists a fibrant replacement $\AA_f$ of $\AA$, as well as arrows $\alpha:I_\Vv\to\AA_f(a_0,a_1)$ and $\beta:I_\Vv\to\AA_f(a_1,a_0)$ (resp. $\alpha':I_\Vv\to\AA_f(b_0,b_1)$ and $\beta':I_\Vv\to\AA_f(b_1,b_0)$) which are ``mutually homotopy inverse''. It then follows that $\beta^*(\alpha')_*:\AA_f(a_0,b_0)\to\AA_f(a_1,b_1)$ and $\alpha^*(\beta')_*:\AA_f(a_1,b_1)\to\AA_f(a_0,b_0)$ are mutually inverse isomorphisms in the homotopy category $\Ho(\Vv)$. The well-known saturation property of the class of weak equivalences of a Quillen model category implies then that $\beta^*(\alpha')_*$ and $\alpha^*(\beta')_*$ are weak equivalences in $\Vv$. The zig-zag of weak equivalences between $\AA(a_0,b_0)$ and $\AA(a_1,b_1)$ is obtained by concatenating with the weak equivalences $\AA(a_0,b_0)\eqv\AA_f(a_0,b_0)$ and $\AA(a_1,b_1)\eqv\AA_f(a_1,b_1)$.

Any $\Vv$-functor $F:\AA\to\BB$ extends to a commutative square\begin{diagram}[small]\AA&\rTo^\sim&\AA_f\\\dTo^F&&\dTo_{F_f}\\\BB&\rTo^\sim&\BB_f\end{diagram}in which $\AA_f$ (resp. $\BB_f$) is a fibrant replacement of $\AA$ (resp. $\BB$), and $F_f$ is a local fibration, cf. the proof of Lemma \ref{locallyfibrant}. Application of the $\Vv$-functor $F_f$ takes an arrow $I_\Vv\to\AA_f(x,y)$ to an arrow $I_\Vv\to\BB_f(Fx,Fy)$, and preserves ``connected components''. The existence of a functorially related zig-zag of weak equivalences between $\BB(Fa_0,Fb_0)$ and $\BB(Fa_1,Fb_1)$ then follows easily.\end{proof}

\begin{prp}\label{2outof3}If $\,\Vv$ is right proper, the class of weak equivalences of $\Vv$-categories satisfies the 2-out-of-3 property.\end{prp}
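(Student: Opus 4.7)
The plan is to verify the 2-out-of-3 property by case analysis on which two of the three maps $F\colon\AA\to\BB$, $G\colon\BB\to\CC$, $GF\colon\AA\to\CC$ are assumed to be weak equivalences. I will rely on four ingredients from above: the pointwise validity of 2-out-of-3 in $\Vv$ applied to the triangle $\AA(x,y)\to\BB(Fx,Fy)\to\CC(GFx,GFy)$; transitivity of equivalence of objects (Lemma \ref{transitivity}); the fact that local weak equivalences reflect virtual equivalence (Lemma \ref{locallyfibrant}); and right properness, which is used through Lemma \ref{rightproper} to identify virtual equivalence with equivalence.

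The case where $F$ and $G$ are weak equivalences is routine. Local weak equivalence is closed under composition, and for essential surjectivity of $GF$ I will take, given $c\in\CC$, an object $b\in\BB$ equivalent to $c$, and then $a\in\AA$ with $Fa$ equivalent to $b$; applying $G$ to the second equivalence and concatenating with the first via transitivity supplies the required equivalence between $GFa$ and $c$.

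Suppose next that $GF$ and $G$ are weak equivalences; I want to deduce the same for $F$. Local weak equivalence of $F$ follows at once from 2-out-of-3 in $\Vv$ applied pointwise to the triangle above. For essential surjectivity, given $b\in\BB$ I pick $a\in\AA$ with $GFa$ equivalent to $Gb$ in $\CC$; regarding this as a virtual equivalence and applying Lemma \ref{locallyfibrant} to the local weak equivalence $G$ reflects it to a virtual equivalence between $Fa$ and $b$ in $\BB$, which Lemma \ref{rightproper} then upgrades to a genuine equivalence, using right properness in an essential way.

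The remaining case, in which $GF$ and $F$ are weak equivalences and $G$ is to be deduced, is where I expect the main obstacle. Essential surjectivity of $G$ is immediate, as taking $a\in\AA$ with $GFa$ equivalent to a given $c\in\CC$ supplies $Fa\in\BB$ as the witness. The harder step is local weak equivalence of $G$, because an arbitrary pair $b_0,b_1\in\BB$ need not lie in the image of $F$, so the map $\BB(b_0,b_1)\to\CC(Gb_0,Gb_1)$ cannot be handled by a pointwise triangle directly. My plan is to choose $a_0,a_1\in\AA$ with $Fa_i$ equivalent, hence (by Lemma \ref{homotopyequivalence}) homotopy equivalent, to $b_i$; then Lemma \ref{homotopyequivalence2} will provide functorially related zig-zags of weak equivalences in $\Vv$ comparing $\BB(Fa_0,Fa_1)$ with $\BB(b_0,b_1)$ and $\CC(GFa_0,GFa_1)$ with $\CC(Gb_0,Gb_1)$. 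Since $F$ and $GF$ are local weak equivalences, the map $\BB(Fa_0,Fa_1)\to\CC(GFa_0,GFa_1)$ is a weak equivalence, and a final diagram chase invoking 2-out-of-3 in $\Vv$ across the two functorial zig-zags will yield the required weak equivalence $\BB(b_0,b_1)\to\CC(Gb_0,Gb_1)$.
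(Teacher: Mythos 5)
Your proposal is correct and follows essentially the same route as the paper: the composite case via transitivity of equivalence (Lemma \ref{transitivity}), the case deducing $F$ via reflection of virtual equivalence and right properness (Lemmas \ref{locallyfibrant} and \ref{rightproper}), and the case deducing $G$ via the functorial zig-zags of Lemmas \ref{homotopyequivalence} and \ref{homotopyequivalence2} combined with pointwise 2-out-of-3 in $\Vv$. No gaps.
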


\begin{proof}Let $F:\AA\to\BB$ and $G:\BB\to\CC$ be $\Vv$-functors.

(i) Assume that $F$ and $G$ are weak equivalences. It is then immediate that $GF$ is a local weak equivalence; moreover $GF$ is essentially surjective by Lemma \ref{transitivity}, hence $GF$ is a weak equivalence.

(ii) Assume that $F$ and $GF$ are weak equivalences. It is then immediate that $G$ is essentially surjective. In order to prove that $G$ is a local weak equivalence, choose objects $b_0,b_1$ in $\BB$ and objects $a_0,a_1$ in $\AA$ such that $Fa_i$ is equivalent to $b_i$ for $i=0,1$. By Lemmas \ref{homotopyequivalence} and \ref{homotopyequivalence2} the hom-objects $\BB(F(a_0),F(a_1))$ and $\BB(b_0,b_1)$ are canonically weakly equivalent in $\Vv$, as are the hom-objects $\CC(GF(a_0),GF(a_1))$ and $\CC(G(b_0),G(b_1))$. We therefore get the following commutative diagram in $\Vv$:

\begin{diagram}[small,UO,silent]\AA(a_0,a_1)&\rTo^{F_{a_0,a_1}}&\BB(F(a_0),F(a_1))&\rLine^\sim&\BB(b_0,b_1)\\{(GF)_{a_0,a_1}}\!\!\!\!&\rdTo&\dTo_{G_{F(a_0),F(a_1)}}&&\dTo_{G_{b_1,b_2}}\\&&\CC(GF(a_0),GF(a_1))&\rLine^\sim&\CC(G(b_1),G(b_2))\end{diagram}
where the undirected horizontal lines stand for (functorial) zigzags of weak equivalences.
By assumption on $F$ and $GF$, $F_{a_0,a_1}$ and $(GF)_{a_0,a_1}$ are weak equivalences. Hence so are $G_{F(a_0),F(a_1)}$ and $G_{b_1,b_2}$ which shows that $G$ is a local weak equivalence.

(iii) Assume that $G$ and $GF$ are weak equivalences. It is then immediate that $F$ is a local weak equivalence. Since $\Vv$ is right proper, Lemmas \ref{locallyfibrant} and \ref{rightproper} imply that $G$ reflects equivalence of objects. It follows then from the essential surjectivity of $GF$ that $F$ is essentially surjective as well, and hence a weak equivalence.\end{proof}

\begin{rmk}It is unusual that the 2-out-of-3-property of the class of weak equivalence is not an immediate consequence of their definition. Readers who feel uncomfortable with this can use instead of Proposition \ref{2outof3} the Propositions \ref{Dwyer-Kan} and \ref{Boardman-Vogt} below, which show (independently of the existence of the model structure) that our weak equivalences coincide with the \emph{Dwyer-Kan equivalences} (cf. \ref{coherence}). The latter class is easily seen to fulfill the 2-out-of-3-property.

There is however one important point for those who wish to take Dwyer-Kan equivalences as weak equivalences from the very beginning. The innocent-looking Lemma \ref{surjective} relies on a compatible choice of the classes of weak equivalences and of fibrations. This was the raison d'\^etre for our definition of weak equivalences. If instead the Dwyer-Kan equivalences are chosen then, in order to validate Lemma \ref{surjective}, the fibrations should be defined as the local fibrations which induce an isofibration on path-components. The latter class is a priori different from our class of fibrations so that the existence of a generating set of trivial cofibrations for them is non-obvious, and most naturally achieved by an identification of the two classes of fibrations. This is the way all the authors of the cited examples \ref{known} proceed. The identification of the two classes of fibrations also follows from our coherence axiom \ref{CoherenceAxiom} and hence ultimately from Proposition \ref{Boardman-Vogt}.\end{rmk}
\vspace{1ex}

For the proof of Theorem \ref{modelstructure} we need a last lemma concerning the cobase change of free cofibrations of $\Vv$-categories. This technical lemma together with Lemma \ref{transfinite} will take care of ``transfinite compositions''. Recall from Section \ref{s1} that any map in the monoidal saturation of the class of cofibrations of $\Vv$ is called a $\otimes$-cofibration. A $\Vv$-functor $F:\AA\to\BB$ is called a \emph{local $\otimes$-cofibration} (resp. a \emph{free cofibration}) if for any objects $x,y$ in $\AA$, the induced map $\AA(x,y)\to\BB(Fx,Fy)$ is a $\otimes$-cofibration in $\Vv$ (resp. if $F$ is freely generated by a cofibration of $\Vv$-graphs, cf. Section \ref{examples}e).

\begin{lma}\label{cofibration}For any adequate monoidal model category $\Vv$, pushouts in $\VCat$ along a $\Vv$-functor $\phi:\AA\to\AA'$ which is injective on objects \begin{diagram}[small]\AA&\rTo^{\phi}&\AA'\\\dTo^{F}&&\dTo_{F'}\\\BB&\rTo_\psi&\NWpbk\BB'\end{diagram}have the following property: If $F$ is a free cofibration which is bijective on objects then $F'$ is a local $\otimes$-cofibration which is bijective on objects.\end{lma}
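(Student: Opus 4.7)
Bijectivity of $F'$ on objects follows immediately by taking object sets: the pushout in $\mathrm{Sets}$ of $\Ob(\AA')\leftarrow\Ob(\AA)\to\Ob(\BB)$ equals $\Ob(\AA')$, since $\Ob(F)$ is a bijection.

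For the local $\otimes$-cofibration claim, the plan is to reduce to a single cell attachment by transfinite composition and then to compute the relevant hom-objects explicitly. Since $F$ is a free cofibration bijective on objects, it is---in $\VCat_S$ with $S=\Ob(\AA)$---a transfinite composition of pushouts of generating free cofibrations $\mathrm{Free}(u_{[s,t]}):\mathrm{Free}(U[s,t])\to\mathrm{Free}(V[s,t])$, where $u:U\to V$ is a generating cofibration of $\,\Vv$, $(s,t)\in S\times S$, and $X[s,t]$ denotes the $\,\Vv$-graph on $S$ concentrated at $(s,t)$. Since pushouts commute with transfinite compositions, $F'$ is itself a transfinite composition of analogous cell attachments in $\VCat_T$ (with $T=\Ob(\AA')$). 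The class of $\otimes$-cofibrations in $\,\Vv$ is closed under transfinite composition, so it suffices to verify the conclusion in the case where $F$ is a single such cell attachment.

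So assume $F$ is a single pushout of $\mathrm{Free}(u_{[s,t]})$ attached via $f:U\to\AA'(\phi s,\phi t)$. A standard ``free extension'' argument presents each hom-object $\BB'(x,y)$, for $x,y\in T$, as a sequential colimit $\mathrm{colim}_n P_n(x,y)$ with $P_0(x,y)=\AA'(x,y)$ and, for $n\geq 1$, $P_n(x,y)$ obtained from $P_{n-1}(x,y)$ by a pushout along an attaching map of the form $u^{\square n}\otimes W_n(x,y)$. Here $u^{\square n}$ is the $n$-fold iterated pushout-product of $u$ with itself, and $W_n(x,y)=\AA'(x,\phi s)\otimes\AA'(\phi t,\phi s)^{\otimes(n-1)}\otimes\AA'(\phi t,y)$; the attaching data $f$ is used to identify the ``$U$-slots'' in the source with composites of $\AA'$-morphisms passing through $\AA'(\phi s,\phi t)$. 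Each $u^{\square n}$ is a cofibration by iterated application of the pushout-product axiom; closure of the class of $\otimes$-cofibrations under tensoring with arbitrary objects and under cobase change therefore makes every transition $P_{n-1}\to P_n$ a $\otimes$-cofibration; and closure under transfinite composition then gives that $\AA'(x,y)\to\BB'(x,y)$ is a $\otimes$-cofibration, as required.

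The main obstacle is rigorously establishing the filtration $\{P_n(x,y)\}$: morphisms in $\BB'$ must be enumerated as ``alternating composites'' of $\AA'$-morphisms with copies of the newly adjoined generator of colour $V$, and one has to verify that the boundary contributions at stage $n$---arising from the $n$ sub-sequences where one copy of $V$ is replaced by its preimage $U$ under $u$, then identified with an $\AA'$-morphism via $f$---really assemble into the iterated pushout-product $u^{\square n}$. Once this combinatorial description is in place, the $\otimes$-cofibration conclusion follows mechanically from the closure properties of the monoidal saturation.
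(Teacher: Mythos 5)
Your argument is correct and follows essentially the same route as the paper: reduce to a pushout computed over the fixed object set $\Ob(\AA')$ and then invoke the Rezk--Schwede--Shipley filtration of free extensions, whose transition maps are $\otimes$-cofibrations by the pushout-product axiom, tensoring, cobase change and transfinite composition. Your preliminary reduction to a single cell attachment is a harmless reorganisation (the paper instead factors $\phi$ through $\phi_!\AA$ and applies the filtration to the whole free extension at once), and the combinatorial verification you flag as the ``main obstacle'' is exactly the content of the cited Rezk--Schwede--Shipley formula of Section \ref{examples}e.
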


\begin{proof}The pushout decomposes into two pushouts by decomposing $\phi:\AA\to\AA'$ into a $\Vv$-functor $\AA\to \phi_!\AA$ (where $\phi$ also denotes the object mapping $\Ob\AA\to\Ob\BB$) followed by a $\Vv$-functor $\phi_!\AA\to\AA'$ which is bijective on objects:

\begin{diagram}[small]\AA&\rTo&\phi_!\AA&\rTo&\AA'\\\dTo^F&&\dTo_{F''}&&\dTo_{F'}\\\BB&\rTo\NWpbk&\psi_!\BB&\rTo\NWpbk&\BB'\end{diagram}
Since $F$ is a free cofibration which is bijective on objects, $F''$ as well is a free cofibration which is bijective on objects. Therefore, the right hand pushout can be considered as a pushout in $\Vv$-categories with fixed object set. As such, this pushout can be described as a sequential colimit in the category of $\Vv$-graphs with fixed object set. According to the Rezk-Schwede-Shipley formula for free extensions (\emph{cf.} \ref{examples}e) this sequential colimit takes the free cofibration $F''$ to a local $\otimes$-cofibration $F'$.\end{proof}

\subsection{Proof of Theorem \ref{modelstructure}}\label{proof}--\vspace{1ex}

We shall check the usual axioms CM1-CM5, where the cofibrations are defined by the left lifting property with respect to trivial fibrations.

By definition, the class of local trivial fibrations is characterised by the right lifting property with respect to$$I_{loc}=\{\JJ_{i,j}[X]\to\JJ_{i,j}[Y]\,|\,X\to Y\textrm{ a generating cofibration in }\Vv,\,i,j\in\{0,1\}\}$$ where the functor $\JJ_{0,1}[-]:\Vv\to\VCat_{\{0,1\}}$ associates to an object $X$ of $\Vv$ the $\Vv$-category on $\{0,1\}$ with $\JJ_{0,1}[X](0,0)=\JJ_{0,1}[X](1,1)=I_\Vv$ and $\JJ_{0,1}[X](0,1)=X$ and $\JJ_{0,1}[X](1,0)=\emptyset_\Vv$ with the canonical composition maps. For the definition of $\JJ_{i,j}[X]$ for general $i,j$, see Section \ref{examples}e. Therefore, Lemma \ref{surjective} implies that a generating set of cofibrations is given by adjoining to $I_{loc}$ the inclusion of the initial (empty) $\Vv$-category into the unit $\Vv$-category (having a single object with $I_\Vv$ as endomorphism monoid).

Similarly, the class of local fibrations is characterised by the right lifting property with respect to$$J_{loc}=\{\JJ_{i,j}[X]\to\JJ_{i,j}[Y]\,|\,X\to Y\textrm{ a generating trivial cofibration in }\Vv,\,i,j\in\{0,1\}\}.$$ Therefore, the definition of a fibration implies that a generating set of trivial cofibrations is given by adjoining to $J_{loc}$ the set of inclusions $\{0\}\to\GG$ where $\GG$ runs through a generating set $\Gg$ of $\Vv$-intervals.

Axiom CM1 concerning the existence of limits/colimits is clear; axiom CM2 about the class of weak equivalences is Proposition \ref{2outof3}. Axiom CM3 asks the classes of cofibrations, weak equivalences and fibrations to be closed under retracts. This holds for weak equivalences since essential surjectivity is closed under retracts. It holds for cofibrations and fibrations since both classes are definable by a lifting property. For the factorisation axioms CM4 we use Quillen's small object argument.

Observe first that it follows essentially from Lemma \ref{cofibration} and the explicit description of the generating cofibrations of $\VCat$ that their saturation (under cobase change and transfinite composition) belongs to the class of local $\otimes$-cofibrations. (Lemma \ref{cofibration} treats the case of an attachment which is injective on objects: the general case reduces to this one by means of the free monoid functor and Section \ref{examples}c). An adjunction argument and the $\otimes$-smallness of the objects in $\Vv$ then imply that those $\Vv$-categories, which are free on small $\Vv$-graphs, are small with respect to the saturation of the generating cofibrations of $\VCat$. Therefore Quillen's small object argument is indeed available and yields the existence of cofibration/trivial fibration factorisations. Observe also that since we required the class of $\otimes$-cofibrations in $\,\Vv$ to be closed under retract, each cofibration of $\Vv$-categories is a local $\otimes$-cofibration.

For the existence of trivial cofibration/fibration factorisations we furthermore have to show that the saturation of the set of generating trivial cofibrations is contained in the class of weak equivalences. Since the forgetful functor from $\Vv$-categories to $\Vv$-graphs preserves filtered colimits, Lemma \ref{transfinite} implies that local weak equivalences which are local $\otimes$-cofibrations are closed under transfinite composition. Moreover, essential surjectivity is also closed under transfinite composition. Therefore it suffices to show that the following two cobase changes in $\VCat$ yield $\Vv$-functors which are both local weak equivalences and local $\otimes$-cofibrations:

\begin{diagram}[small]\JJ_{i,j}[X]&\rTo&\AA&\quad\quad&\{0\}&\rTo&\AA\\\dTo^\sim&&\dTo&&\dTo^\sim&&\dTo\\\JJ_{i,j}[Y]&\rTo&\NWpbk\BB&\quad\quad&\GG&\rTo&\NWpbk\BB\end{diagram}
For the left hand cobase change this follows from Lemma \ref{cofibration} and from the existence of a transferred model structure on $\VCat_{\Ob(\AA)}$ because $\AA\to\BB$ can also be constructed as a pushout in $\VCat_{\Ob(\AA)}$. For the right hand cobase change we consider the following decomposition into two pushouts:
\begin{diagram}[small]\{0\}&\rTo&\AA\\\dTo^\phi&&\dTo_{\phi'}\\\GG_{0,0}&\rTo&\NWpbk\AA'\\\dTo^\psi&&\dTo_{\psi'}\\\GG&\rTo&\NWpbk\BB\end{diagram}
in which $\GG_{0,0}$ denotes a $\Vv$-category with a single object having $\GG(0,0)$ as endomor-phism-monoid. The $\Vv$-functor $\psi$ induces isomorphisms on hom-objects and is injective on objects; therefore (by the known purely algebraic properties of pushouts in $\VCat$), the $\Vv$-functor $\psi'$ also induces isomorphisms on hom-objects and is injective on objects, so certainly a local $\otimes$-cofibration. Since $\psi'$ is essentially surjective by construction, it is a local weak equivalence as well. It remains to be shown that $\phi'$ has the same properties. Since $\phi$ is bijective on objects, $\phi'$ can be constructed as a pushout in $\VCat_{\Ob(\AA)}$, via
\begin{diagram}[small]\{0\}&\rTo^x&\Ob(\AA)&\rTo&\AA\\\dTo^\phi&&\dTo&&\dTo_{\phi'}\\\GG_{0,0}&\rTo&\NWpbk x_!\GG_{0,0}&\rTo&\NWpbk\AA'\end{diagram}
 The Interval Cofibrancy Theorem \ref{cofibrant} implies that $\GG(0,0)$ is a weakly contractible, cofibrant monoid, so that the middle vertical arrow is a trivial cofibration in $\VCat_{\Ob(\AA)}$. It follows that $\phi'$ is a trivial cofibration in $\VCat_{\Ob(\AA)}$, and hence a local weak equivalence and a local $\otimes$-cofibration, as required.

Finally, the first half of lifting axiom CM5 follows from the definition of cofibrations. A well-known retract argument yields the second half of axiom CM5.\qed
\vspace{1ex}

The rest of this section is devoted to the identification of the weak equivalences of the canonical model structure with the so-called \emph{Dwyer-Kan equivalences}, often used in literature, see \cite{Be1,DK,La,La2,Ta,Ta2}. This identification establishes the second part of Theorem \ref{canonical}.

\begin{dfn}\label{coherence}A functor $\AA\to\BB$ between $\,\Vv$-categories is called a \emph{Dwyer-Kan equivalence} if it is a local weak equivalence with the property that the induced functor $\pi_0(f):\pi_0(\AA)\to\pi_0(\BB)$ is an equivalence of categories (cf. Remark \ref{pi0} for notation).\end{dfn}

Note that by Lemma \ref{homotopyequivalence}, each weak equivalence in the sense of Theorem \ref{modelstructure} is a Dwyer-Kan equivalence. The converse implication however is less obvious and amounts roughly to the property that any \emph{homotopy equivalence} is \emph{coherent} in the sense of Boardman and Vogt \cite{BV,Vogt}. This is a highly non-trivial property and probably one of the reasons for Lurie's \emph{invertibility axiom}, cf. Remark \ref{Invertibility}.

Recall that maps (resp. isomorphisms) in a $\Vv$-category are represented by $\Vv$-functors out of the category $\JJ$ (resp. $\Iso$) in $\VCat_{\{0,1\}}$ where $\JJ(i,j)=I_\Vv$ if $i\leq j$ and $\JJ(1,0)=\emptyset_\Vv$ (resp. $\Iso(i,j)=I_\Vv$ for all $i,j$). A \emph{cofibration} $\JJ\ito\HH$ into a $\Vv$-interval $\HH$ is called \emph{natural} if it fits into a commutative diagram of the form\begin{diagram}[small]\JJ&\rTo&\Iso\\\dTo&&\dTo_\sim\\\HH&\rTo^\sim&\Iso_f\end{diagram}where $\JJ\to\Iso$ is the obvious inclusion and $\Iso_f$ is a fibrant replacement of $\Iso$.

\begin{dfn}\label{CoherenceAxiom}A \emph{homotopy equivalence} between two objects of a $\Vv$-category $\AA$ is called \emph{coherent}, if the representing $\Vv$-functor $\alpha:\JJ\to\AA_f$ (cf. Lemma \ref{homotopyequivalence}) extends along a natural cofibration into a $\Vv$-interval $\HH$, as in\begin{diagram}[small,UO]\JJ&\rTo&\AA_f\\\dTo&\ruDotsto&\\\HH&&\end{diagram}A monoidal model category $\Vv$ is said to satisfy the \emph{coherence axiom} if every homotopy equivalence in any $\Vv$-category is coherent.\end{dfn}

\begin{rmk}\label{Invertibility}The \emph{invertibility axiom} of Lurie can be reformulated as follows (\emph{cf.} \cite[A.3.2.14]{Lu}): for any homotopy equivalence $\alpha:\JJ\to\AA_f$ and any natural cofibration $\JJ\ito\HH$ into a $\Vv$-interval $\HH$, the right vertical map in the pushout\begin{diagram}[small]\JJ&\rTo^\alpha&\AA_f\\\dTo&&\dTo\\\HH&\rTo&\AA_f\{\alpha^{-1}\}\end{diagram}is a weak equivalence. In other words: \emph{inverting a homotopy equivalence in a homotopy invariant way does not change the homotopy type}. Lurie's invertibility axiom in fact \emph{implies} our coherence axiom. Indeed, since $\JJ\ito\HH$ is a cofibration, its pushout $\AA_f\to\AA_f\{\alpha^{-1}\}$ is actually a trivial cofibration which has a retraction $\AA_f\{\alpha^{-1}\}\to\AA_f$ because $\AA_f$ is fibrant. The existence of the composite $\Vv$-functor $\HH\to\AA_f\{\alpha^{-1}\}\to\AA_f$ shows then that the homotopy equivalence $\alpha$ is coherent.\end{rmk}

\begin{prp}\label{Dwyer-Kan}Let $\Vv$ be a monoidal model category which is right proper and satisfies the coherence axiom. Then the class of essentially surjective local weak equivalences coincides with the class of Dwyer-Kan equivalences.\end{prp}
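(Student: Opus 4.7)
The two notions differ only in the object-level condition imposed in addition to being a local weak equivalence: essential surjectivity via $\Vv$-intervals versus essential surjectivity of $\pi_0(F)$. Hence the plan is to show that, for a local weak equivalence $F\colon\AA\to\BB$ (so that in particular $\pi_0(F)$ is automatically fully faithful thanks to the identification $\pi_0(\AA)(x,y)=[I_\Vv,\AA_f(x,y)]$ and the local equivalence $\AA_f\to\BB_f$ produced as in the proof of Lemma \ref{locallyfibrant}), the two object-level surjectivity conditions are equivalent.

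For the forward implication, assume $F$ is essentially surjective in the sense of \ref{fibration} and fix $b\in\Ob(\BB)$. Then there is an object $a\in\Ob(\AA)$ together with a $\Vv$-interval $\HH$ and a $\Vv$-functor $\HH\to\BB$ sending $0$ to $Fa$ and $1$ to $b$; composing with a fibrant replacement $\BB\eqv\BB_f$ in $\VCat_{\Ob(\BB)}$ shows that $Fa$ and $b$ are virtually equivalent, hence homotopy equivalent by Lemma \ref{homotopyequivalence}. This means $\pi_0(F)$ is essentially surjective, so $F$ is a Dwyer-Kan equivalence. (No use of right properness or of the coherence axiom is needed here.)

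For the converse, assume $F$ is a Dwyer-Kan equivalence and fix $b\in\Ob(\BB)$. Choose $a\in\Ob(\AA)$ such that $Fa$ is isomorphic to $b$ in $\pi_0(\BB)$, i.e.\ homotopy equivalent in $\BB$ in the sense of Definition \ref{defequivalence}. The coherence axiom applied to this homotopy equivalence produces, after choosing a fibrant replacement $\BB_f$ of $\BB$ in $\VCat_{\Ob(\BB)}$, a $\Vv$-interval $\HH$ together with a $\Vv$-functor $\HH\to\BB_f$ sending $0$ to $Fa$ and $1$ to $b$; that is, $Fa$ and $b$ are virtually equivalent in $\BB$. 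Now right properness enters: Lemma \ref{rightproper} upgrades virtual equivalence to honest equivalence, producing a $\Vv$-interval $\HH'$ and a $\Vv$-functor $\HH'\to\BB$ witnessing that $Fa$ and $b$ are equivalent in $\BB$. This is precisely the statement that $F$ is essentially surjective in the sense of \ref{fibration}.

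The only delicate point, and the reason the statement is non-trivial, is the backward direction: passing from a homotopy equivalence (a datum in $\BB_f$ built from two zig-zag-inverse arrows $I_\Vv\to\BB_f(Fa,b),\,\BB_f(b,Fa)$) to a $\Vv$-functor from a $\Vv$-interval is exactly what the coherence axiom buys us in $\BB_f$, and right properness is what moves this from $\BB_f$ back to $\BB$. Both hypotheses on $\Vv$ are therefore used exactly once, at the same step.
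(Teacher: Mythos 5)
Your proof is correct and follows essentially the same route as the paper: both reduce the statement to comparing the two object-level surjectivity conditions via the chain equivalence $\Rightarrow$ virtual equivalence $\Rightarrow$ homotopy equivalence (Lemma \ref{homotopyequivalence}), with the coherence axiom supplying homotopy equivalence $\Rightarrow$ virtual equivalence and right properness (Lemma \ref{rightproper}) supplying virtual equivalence $\Rightarrow$ equivalence. Your added observation that $\pi_0(F)$ is automatically fully faithful for a local weak equivalence is correct and is implicit in the paper's argument.
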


\begin{proof}Dwyer-Kan equivalences are local weak equivalences which on objects are surjective up to homotopy equivalence. Our notion of essential surjectivity means surjective up to equivalence. In general, equivalence implies virtual equivalence, and virtual equivalence implies homotopy equivalence (cf. Lemma \ref{homotopyequivalence}). If the coherence axiom holds then homotopy equivalence implies virtual equivalence; moreover, under right properness, virtual equivalence implies equivalence (cf. Lemma \ref{rightproper}). Therefore (under the coherence axiom \emph{and} right properness) the two notions of essential surjectivity coincide.\end{proof}

In \cite[Lemma 4.16]{BV}, Boardman and Vogt prove that homotopy equivalences in topological categories are coherent. For their proof they use a particular topological category on two objects, namely what we called elsewhere \cite{BM1,BM2} the \emph{Boardman-Vogt $W$-resolution} of the category $\Iso$ representing isomorphisms (throughout categories are considered as coloured non-symmetric operads with unary operations only). It was shown in \cite{BM1,BM2} that a general Boardman-Vogt $W$-resolution for $\Vv$-categories exists provided $\Vv$ possesses a suitable interval. We shall see in Lemma \ref{H} below that \emph{any} adequate monoidal model category $\Vv$ has such an interval $H$, so that the associated $W$-resolution $W(H,\Iso)$ of $\Iso$ is a $\Vv$-interval parametrising coherent homotopy equivalences in $\Vv$-categories. Boardman and Vogt's proof of the coherence axiom for topological categories now applies \emph{mutatis mutandis} to $\Vv$-categories. The following two lemmas of a general homotopical flavour are preparatory.

\begin{lma}[Vogt \cite{VogtHELP}]\label{weaklifting}A map $w:X\to Y$ between fibrant objects of a model category $\Vv$ is a weak equivalence if and only if for any cofibration between cofibrant objects $\gamma:A\to B$ and any commutative square of unbroken arrows\begin{diagram}[small,UO]A&\rTo&X\\\dTo^\gamma&\phi\,\,\,\ruDotsto\,\,\simeq&\dTo_w\\B&\rTo&Y\end{diagram}there exists a diagonal filler $\phi:B\to X$ which makes the upper triangle commute and the lower triangle commute up to homotopy.\end{lma}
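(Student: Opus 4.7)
The plan is to prove the two directions of the biconditional separately. For necessity I would factor $w$ through a trivial cofibration followed by a trivial fibration, and exploit the resulting lifting properties; for sufficiency I would show that the HELP condition forces $w$ to induce bijections on homotopy classes out of every cofibrant object, hence to be a weak equivalence.

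For the forward direction, first factor $w = p\circ i$ with $i\colon X\to X'$ a trivial cofibration and $p\colon X'\to Y$ a fibration; two-out-of-three forces $p$ to be a trivial fibration, and $X'$ is fibrant because $Y$ is. Given the square with cofibration $\gamma\colon A\to B$ between cofibrants, top map $f$ and bottom map $g$, the right lifting property of $p$ against $\gamma$ produces $\phi'\colon B\to X'$ with $\phi'\gamma = if$ and $p\phi' = g$. Since $i$ is a trivial cofibration and $X$ is fibrant, I can lift $\mathrm{id}_X$ to obtain a retraction $r\colon X'\to X$ of $i$; then $\phi := r\phi'$ makes the upper triangle commute strictly ($\phi\gamma = rif = f$). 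For the lower triangle I must exhibit a homotopy $pir\phi' \simeq p\phi'$ in $Y$. I would first construct a right homotopy $H\colon X'\to (X')^I$ from $ir$ to $\mathrm{id}_{X'}$ by lifting the trivial cofibration $i$ against the path-object fibration $(d_0,d_1)\colon (X')^I\to X'\times X'$, using the upper map $s\circ i$ and the lower map $(ir,\mathrm{id}_{X'})$, which agree on $X$ since $iri = i$. Precomposing $H$ with $\phi'$ yields a right homotopy $B\to (X')^I$ from $ir\phi'$ to $\phi'$; since $B$ is cofibrant and $X'$ is fibrant, this corresponds to a left homotopy $B\wedge I\to X'$, which I post-compose with $p$ to obtain the required homotopy $w\phi\simeq g$ in $Y$.

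For the backward direction, suppose $w$ has the stated HELP property. Applying HELP to $\gamma = (\emptyset\to B)$ for any cofibrant $B$ shows $w_*\colon[B,X]\to[B,Y]$ is surjective: every $g\colon B\to Y$ arises from some $\phi\colon B\to X$ with $w\phi\simeq g$. Applying HELP to the cylinder cofibration $B\sqcup B\to B\wedge I$ (between cofibrants) shows $w_*$ is injective: if $\phi_0,\phi_1\colon B\to X$ satisfy $w\phi_0\simeq w\phi_1$ via a homotopy $H\colon B\wedge I\to Y$, HELP yields $\Phi\colon B\wedge I\to X$ restricting strictly to $(\phi_0,\phi_1)$ on the boundary, which is a left homotopy between $\phi_0$ and $\phi_1$. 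Since $w$ induces bijections $[B,X]\to[B,Y]$ for every cofibrant $B$ and both $X,Y$ are fibrant, $w$ becomes an isomorphism in the homotopy category, hence a weak equivalence.

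The main obstacle is the forward direction's homotopy $ir\simeq\mathrm{id}_{X'}$: since $X'$ need not be cofibrant, one cannot directly pass between actual homotopies in $X'$ and equalities in the homotopy category. The trick is that this homotopy is only ever needed after precomposition with the cofibrant $B$, which puts us in the standard regime (cofibrant source, fibrant target) where left and right homotopies agree, allowing the right homotopy produced by the path-object lift to be transported to a left homotopy suitable for post-composition with $p$.
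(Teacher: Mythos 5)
Your proof is correct, and both directions go through; the overall strategy (factor $w$ for necessity, test on homotopy classes for sufficiency) matches the paper's, but the execution differs in both halves. For the forward direction the paper invokes Brown's Lemma, writing $w=p\,i$ with $i$ a \emph{section of a trivial fibration} $r:X'\to X$ and $p$ a trivial fibration; after lifting $\gamma$ against $p$ to get $\psi$ and setting $\phi=r\psi$, the homotopy $w\phi\simeq p\psi$ is obtained in one line from the injectivity of $[B,r]:[B,X']\to[B,X]$ (composition with a trivial fibration is injective on homotopy classes of maps out of a cofibrant object), since $\psi$ and $ir\psi$ become equal after composing with $r$. You instead use the ordinary factorization into a trivial cofibration followed by a (necessarily trivial) fibration, produce the retraction $r$ from fibrancy of $X$, and then build the homotopy $ir\simeq\mathrm{id}_{X'}$ by hand with a path-object lift before transporting it through $B$; this costs a few more steps but is equally valid, and your closing remark that the homotopy is only needed after precomposition with the cofibrant $B$ correctly isolates the one place where care is required. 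For the converse the paper makes a single application of the hypothesis to a cofibration between cofibrant replacements of $X$ and $Y$ and reads off that $[\phi]$, hence $[w]$, is invertible in $\Ho(\Vv)$; your version splits this into surjectivity (via $\emptyset\to B$) and injectivity (via the cylinder inclusion $B\sqcup B\to B\wedge I$) of $w_*$ on homotopy classes, which is slightly longer but arguably more transparent, and the final appeal to representability in $\Ho(\Vv)$ plus saturation is the same.
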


\begin{proof}Assume first that $w$ is a weak equivalence. According to Brown's Lemma, $w$ factors then as a section $i:X\to X'$ of a trivial fibration $r:X'\to X$ followed by a trivial fibration $p:X'\to Y$. Since $\gamma$ is a cofibration, there is a filler $\psi:B\to X'$. Composing the latter with $r:X'\to X$ yields the required filler $\phi:B\to X$ making the upper triangle commute. By definition, we have $w\phi=w r \psi=pir\psi$, and it remains to be shown that this map is homotopic to $p\psi$. It suffices thus to show that $\psi$ and $ir\psi$ are homotopic. This holds since both maps get equal when composed with $r$, and composition with a \emph{trivial fibration} induces an injection $[B,r]:[B,X']\to[B,X]$ on homotopy classes.

Assume conversely that $w$ has the aforementioned lifting property and choose $A\to X$ (resp. $B\to Y$) to be a cofibrant replacement of $X$ (resp. $Y$). Passing to the homotopy category $\Ho(\Vv)$ shows then that the homotopy class $[\phi]\in[B,X]$ is both surjective and injective, hence bijective. Therefore, the homotopy class $[w]\in[X,Y]$ is bijective as well, so that $w:X\to Y$ is a weak equivalence.\end{proof}

\begin{lma}\label{parallellifting}Consider the following commutative diagram in $\Vv$:
\begin{diagram}[small,UO]&&X&&\\j\!\!\!\!&\ruTo&&\rdTo&\!\!\!\!w\\A&\rTo&A'&\rTo&Y\\\dTo^\gamma&&\dTo_{\gamma'}&&\\B&\rTo_\delta&B'&&\end{diagram}
in which $\gamma$ is a cofibration between cofibrant objects, $\gamma'$ is a trivial cofibration between cofibrant objects, and $w$ is a weak equivalence between fibrant objects. We assume moreover that the induced map $k:B\cup_AA'\to B'$ is a cofibration too.

Then there exists a pair of liftings $(\Phi:B\to X,\Psi:B'\to Y)$ which make the whole diagram commute.\end{lma}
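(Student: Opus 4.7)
The plan is to solve the double lifting problem by first extending the given arrow $A'\to Y$ up to the whole of $B'$, then using a mapping path space for $w$ to reduce the $B$-side to a lift against a \emph{trivial} fibration, and finally adjusting the $B'$-extension via the resulting rel-$A$ homotopy.

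Denote the horizontal arrows of the middle row of the diagram by $\alpha:A\to A'$ and $y:A'\to Y$. I first extend $y$ along the trivial cofibration $\gamma':A'\ito B'$ to a map $\Psi_0:B'\to Y$, which is possible because $Y$ is fibrant. Then $\Psi_0\delta:B\to Y$ satisfies $\Psi_0\delta\gamma=\Psi_0\gamma'\alpha=y\alpha=wj$. Next, I form the mapping path space $N_w=X\times_Y Y^I$, the pullback of $w:X\to Y$ along the source projection $\pi_0:Y^I\to Y$ of a good path object (both $\pi_0,\pi_1$ are trivial fibrations because $Y$ is fibrant). The standard factorization writes $\iota:X\to N_w$, $x\mapsto(x,\mathrm{const}_{wx})$, as a trivial cofibration, and $\mathrm{pr}:N_w\to Y$, $(x,p)\mapsto\pi_1(p)$, as a fibration; since $w=\mathrm{pr}\circ\iota$ is a weak equivalence, $\mathrm{pr}$ is in fact a \emph{trivial} fibration. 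Lifting $\iota j:A\to N_w$ along the cofibration $\gamma$ against $\mathrm{pr}$, with bottom map $\Psi_0\delta$, yields $\psi=(\Phi,h):B\to N_w$ in which $\Phi:B\to X$ satisfies $\Phi\gamma=j$ and $h:B\to Y^I$ is a path from $w\Phi$ to $\Psi_0\delta$ whose restriction $h\gamma$ is the constant path at $wj$.

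The rel-$A$ property of $h$ is the decisive structural gain. The constant path $\mathrm{const}_y:A'\to Y^I$ at $y$ agrees with $h$ on $A$ (both restrict to the constant at $y\alpha=wj$), so the two glue to a well-defined map $K:B\cup_AA'\to Y^I$ with $\pi_0 K=[w\Phi,y]$ and $\pi_1 K=\Psi_0 k$. Since $k:B\cup_AA'\to B'$ is a cofibration by hypothesis and $\pi_1:Y^I\to Y$ is a trivial fibration, one more lift produces $\tilde K:B'\to Y^I$ with $\tilde K k=K$ and $\pi_1\tilde K=\Psi_0$. Setting $\Psi:=\pi_0\tilde K:B'\to Y$, a direct check gives $\Psi\delta=\pi_0 h=w\Phi$ and $\Psi\gamma'=\pi_0\mathrm{const}_y=y$, so $(\Phi,\Psi)$ is the required simultaneous lift.

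The main obstacle handled by this strategy is that a direct application of Lemma \ref{weaklifting} to the cofibration $\gamma$ only yields a homotopy $w\Phi\simeq\Psi_0\delta$ without any rel-$A$ control, and such a free homotopy cannot in general be glued with the constant homotopy on $A'$ over the pushout $B\cup_AA'$. Passing through the mapping path space $N_w$ packages $\Phi$ together with the path $h$ in a single lifting problem against a trivial fibration, which enforces the rel-$A$ condition structurally and makes the subsequent gluing and second lift possible.
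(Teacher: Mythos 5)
Your proof is correct, and while it follows the same overall skeleton as the paper's argument (first extend $y:A'\to Y$ over the trivial cofibration $\gamma'$ to some $\Psi_0$, then produce $\Phi$ on $B$ together with a homotopy $w\Phi\simeq\Psi_0\delta$, then use the cofibration $k$ to correct $\Psi_0$ so that the square commutes strictly), the decisive middle step is genuinely different. The paper obtains $\Phi$ from Lemma \ref{weaklifting} and then asserts that the glued map $[w\Phi,y]:B\cup_AA'\to Y$ is homotopic to $\tilde\Psi k$, invoking the homotopy extension property of $k$; as you correctly observe, the homotopy provided by Lemma \ref{weaklifting} carries no rel-$A$ control, so the gluing of that homotopy with the constant homotopy on $A'$ is exactly the point where the paper's proof is terse. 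Your detour through the mapping path space $N_w=X\times_YY^I$ repairs this: lifting $\iota j$ against the trivial fibration $\mathrm{pr}:N_w\to Y$ packages $\Phi$ and the path $h$ into one lifting problem and forces $h\gamma=\mathrm{const}_{wj}$, which is precisely what makes $K=[h,\mathrm{const}_y]$ well defined on $B\cup_AA'$; the final lift of $K$ against the trivial fibration $\pi_1:Y^I\to Y$ along $k$ then replaces the paper's appeal to the HEP by an explicit lifting. What your route buys is rigour at the gluing step and independence from Lemma \ref{weaklifting} (indeed you never use cofibrancy of $A,B,A',B'$); what the paper's route buys is brevity by reusing an already-established lemma. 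One cosmetic inaccuracy: the canonical map $\iota:X\to N_w$ of the mapping path space factorisation is a weak equivalence (a section of the trivial fibration $N_w\to X$) but not in general a cofibration; since you only use that it is a weak equivalence, via 2-out-of-3, to conclude that $\mathrm{pr}$ is a trivial fibration, this does not affect the argument, but the word ``cofibration'' should be dropped there.
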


\begin{proof}Since $\gamma'$ is a trivial cofibration and $Y$ is fibrant, there exists a lift $\tilde{\Psi}:B'\to Y$ making the diagram commute. Precomposing $\tilde{\Psi}$ with $\delta$ and invoking Lemma \ref{weaklifting} yields a diagonal filler $\Phi:B\to X$ such that $\Phi\gamma=j$ and such that $w\Phi$ and $\tilde{\Psi}\delta$ are homotopic. The universal property of pushouts yields a map $B\cup_AA'\to Y$ which is homotopic to $\tilde{\Psi}k$. The homotopy extension property of the cofibration $k$ permits to replace $\tilde{\Psi}$ with a lifting $\Psi:B'\to Y$ such that the composite map $\Psi k$ coincides with the given map $B\cup_AA'\to Y$ whence $\Psi\delta=w\Phi$ as required.\end{proof}

We are now ready to deduce the coherence axiom \ref{CoherenceAxiom} from the existence of a suitable $W$-resolution for $\Vv$-categories. Recall from \cite[Definition 4.1]{BM1} that an \emph{interval} $H$ for a \emph{monoidal model category} $\Vv$ with \emph{cofibrant unit} $I_\Vv$ consists in a factorisation of the folding map $I_\Vv\sqcup I_\Vv\lrto I_\Vv$ into a cofibration  followed by a weak equivalence $$I_\Vv\sqcup I_\Vv\overset{(0,1)}{\ito}H\eqv I_\Vv,$$ together with a monoid structure $H\otimes H\to H$ for which $0:I_\Vv\to H$ is neutral, and $1:I_\Vv\to H$ is absorbing.

\begin{lma}\label{H}Any adequate monoidal model category $\Vv$ with cofibrant unit $I_\Vv$ has an interval.\end{lma}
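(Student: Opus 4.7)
The plan is to apply adequacy to obtain a transferred model structure on a suitable category of algebras, then to factor the fold map in this category. Explicitly, let $\mathcal{I}$ denote the non-symmetric uncoloured operad in sets (see \cite{BM2}) whose algebras in a symmetric monoidal category $\Cc$ are monoids $M$ in $\Cc$ equipped with a neutral element $0\colon I_\Cc\to M$ and an absorbing element $1\colon I_\Cc\to M$, encoded operadically. Since $\Vv$ is adequate, the category $\mathcal{I}\text{-Alg}(\Vv)$ inherits a transferred model structure from $\Vv$.

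The initial $\mathcal{I}$-algebra $F_{\mathcal{I}}(\emptyset)$ has underlying object $I_\Vv\sqcup I_\Vv$, the two summands realising the distinguished elements $0$ and $1$; cofibrancy of $I_\Vv$ makes it cofibrant in $\Vv$. The monoidal unit $I_\Vv$, with its canonical monoid structure and $0=1=\mathrm{id}$, carries a designated $\mathcal{I}$-algebra structure, and the unique morphism $F_{\mathcal{I}}(\emptyset)\to I_\Vv$ of $\mathcal{I}$-algebras has underlying map the fold. Factorise this morphism in the transferred structure as a cofibration $I_\Vv\sqcup I_\Vv\ito H$ followed by a trivial fibration $H\eqv I_\Vv$; the latter is a weak equivalence in $\Vv$ by the very definition of the transferred structure, while $H$ inherits the required monoid structure together with neutral $0$ and absorbing $1$.

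It remains to verify that the underlying morphism $I_\Vv\sqcup I_\Vv\to H$ is a cofibration in $\Vv$, and this is the main obstacle. Because $\mathcal{I}$ is a non-symmetric operad in sets, the free algebra functor $F_{\mathcal{I}}$ is a coproduct of iterated tensor powers, so the pushout--product axiom together with cofibrancy of $I_\Vv$ implies that $F_{\mathcal{I}}$ sends cofibrations of cofibrant objects to cofibrations in $\Vv$. A Rezk--Schwede--Shipley style analysis of cell attachments (analogous to the argument behind Lemma \ref{cofibration}, and carried out in detail for non-symmetric set-operads by Muro \cite{Mu}) then expresses each free extension in $\mathcal{I}\text{-Alg}(\Vv)$ along a generating cofibration as a sequential colimit in $\Vv$ whose transition maps are cofibrations, provided one starts from an algebra with cofibrant underlying object. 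Iterating along the small-object argument produces $H$ as a transfinite composition of such cofibrations starting from the cofibrant $I_\Vv\sqcup I_\Vv$, and closure of cofibrations in $\Vv$ under transfinite composition and retract delivers the desired cofibration. The transfer of cofibrancy from $\mathcal{I}\text{-Alg}(\Vv)$ down to $\Vv$ is precisely the delicate point where both adequacy and cofibrancy of the monoidal unit are indispensable.
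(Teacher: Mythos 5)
Your proposal is correct and follows essentially the same route as the paper's proof: identify segments (monoids with an absorbing element) as algebras over a non-symmetric set-operad, use adequacy to transfer a model structure to them, factor the fold map $I_\Vv\sqcup I_\Vv\to I_\Vv$ of segments into a cofibration followed by a trivial fibration, and then reduce everything to showing that the forgetful functor to $\Vv$ carries this cofibration of segments to a cofibration in $\Vv$. The only point where you diverge is in that last verification. The paper factors the forgetful functor as $\Seg_\Vv\to\Mon_\Vv\to\Vv$ and treats the two stages separately: the monoid stage is covered by the Rezk--Schwede--Shipley filtration already set up in Section \ref{examples}(c) for the associative operad, while the absorbing-element stage is disposed of by the elementary observation that $U_1$ preserves pushouts and transfinite compositions and that its left adjoint merely adjoins an external copy of the (cofibrant) unit. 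You instead invoke a cellular filtration for free extensions over the full segment operad in one step; this is a legitimate and somewhat more uniform argument, available from Muro's analysis of non-symmetric operads or from \cite[Corollary 5.5]{BM0}, at the price of relying on a general operadic filtration that the paper deliberately avoids re-proving for this one operad. Either way the induction correctly starts from the cofibrant initial segment $I_\Vv\sqcup I_\Vv$, which is where cofibrancy of $I_\Vv$ enters.
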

\begin{proof}Consider the following two adjoint pairs$$\Vv\overset{U_2}{\lra}\Mon_\Vv\overset{U_1}{\lra}\Seg_\Vv$$where $\Mon_\Vv$ denotes the category of monoids in $\Vv$, and $\Seg_\Vv$ the category of ``segments'' (i.e. monoids with absorbing element) in $\Vv$. The right adjoint functors $U_1,U_2$ are the obvious forgetful functors. By adequacy of $\Vv$ there are transferred model structures on monoids, resp. segments so that both adjoint pairs become Quillen pairs. Consider now the folding map $I_\Vv\sqcup I_\Vv\to I_\Vv$ as a map of segments and factor it as a cofibration $I_\Vv\sqcup I_\Vv\ito H$ followed by a weak equivalence $H\to I_\Vv$ in the transferred model structure on segments.

The segment $H$ would be an interval in $\Vv$ if the composite forgetful functor $U_2U_1$ took the cofibration of segments $I_\Vv\sqcup I_\Vv\ito H$ to a cofibration in $\Vv$. Since $I_\Vv\sqcup I_\Vv$ is a cofibrant segment it will be sufficient to show that $U_2U_1$ preserves cofibrations between cofibrant objects. For $U_2$ this follows from the discussion in Section \ref{examples}c. For $U_1$ observe that $U_1$ preserves pushouts and transfinite compositions, and its left adjoint consists in adjoining an external absorbing element. Thus the forgetful functor $U_1$ has the required property because the unit $I_\Vv$ is assumed cofibrant.\end{proof}

We showed in \cite[Theorem 5.1]{BM1} that for any $\Sigma$-cofibrant symmetric operad $P$ in a monoidal model category $\Vv$ with cofibrant unit $I_\Vv$ and interval $H$ there exists a canonical cofibrant replacement $W(H,P)\eqv P$ in the category of symmetric operads. As mentioned above, any $\Vv$-category $\AA$ can be considered as a coloured non-symmetric operad in $\Vv$ with unary operations only. This point of view is discussed in more detail in \cite{BM2}. In particular, the same proof as for \cite[5.1]{BM1} yields for any \emph{well-pointed} $\Vv$-category $\AA$ in a monoidal model category $\Vv$ with cofibrant unit $I_\Vv$ and interval $H$ a canonical cofibrant replacement $W(H,\AA)\eqv\AA$ in the category of $\Vv$-categories. Here, the term \emph{well-pointed} means that the reflexive $\Vv$-graph underlying $\AA$ is cofibrant in the category of reflexive $\Vv$-graphs, cf. Section \ref{examples}e. Indeed, the analog of \cite[5.1]{BM1} for $\Vv$-categories states in more precise terms that the counit $\Ff_*(\AA)\to\AA$ of the free-forgetful adjunction between reflexive $\Vv$-graphs and $\Vv$-categories (with fixed object-set) factors as a cofibration $\Ff_*(\AA)\ito W(H,\AA)$ followed by a weak equivalence $W(H,\AA)\eqv\AA$.

For our purpose only the special case $\AA=\Iso$ is relevant. Observe that the reflexive $\Vv$-graph underlying $\Iso$ is indeed cofibrant since the unit $I_\Vv$ is cofibrant. In particular, the $\Vv$-category $W(H,\Iso)$ is a $\Vv$-interval in the sense of Definition \ref{interval}. Moreover, the inclusion $\JJ\to\Iso$ induces a cofibration of the underlying reflexive $\Vv$-graphs. Therefore, we get cofibrations of $\Vv$-categories $$\JJ=\Ff_*(\JJ)\ito\Ff_*(\Iso)\ito W(H,\Iso)$$from which it follows that the composite map $\JJ\ito W(H,\Iso)$ is a \emph{natural cofibration} in the sense used for the formulation of the coherence axiom \ref{CoherenceAxiom}.

\begin{prp}[cf. Lemma 4.16 of Boardman-Vogt \cite{BV}]\label{Boardman-Vogt}Any adequate monoidal model category $\Vv$ with cofibrant unit $I_\Vv$ satisfies the coherence axiom.\end{prp}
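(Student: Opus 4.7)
The strategy is to mimic Boardman--Vogt's original topological argument directly, using the $\Vv$-interval $W(H,\Iso)$ constructed just above. Given any homotopy equivalence in a $\Vv$-category $\AA$, represented by $\alpha:\JJ\to\AA_f$ together with an inverse arrow $\beta:I_\Vv\to\AA_f(y,x)$ and the two homotopies $\beta\alpha\simeq\mathrm{id}_x$ and $\alpha\beta\simeq\mathrm{id}_y$ that are built into the definition of a homotopy equivalence, I would extend $\alpha$ to a $\Vv$-functor $W(H,\Iso)\to\AA_f$. Since we already know that $W(H,\Iso)$ is a $\Vv$-interval and that $\JJ\ito W(H,\Iso)$ is a natural cofibration, such an extension directly witnesses the coherence of $\alpha$ in the sense of Definition \ref{CoherenceAxiom}, which is what we want to prove.

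To produce the extension, I would exploit the explicit combinatorial description of the $W$-construction in the unary operad case: a hom $0\to 1$ in $W(H,\Iso)$ is a formal alternating word in the generators $i$ of $\Iso(0,1)$ and $j$ of $\Iso(1,0)$, separated by ``lengths'' in $H$, subject to two $W$-relations, namely that a length $0\in H$ degenerates an inner vertex and a length $1\in H$ triggers a collapse using the defining relations $ji=\mathrm{id}_0$ and $ij=\mathrm{id}_1$ of $\Iso$. This description exhibits $W(H,\Iso)$ as a sequential colimit $W_0\ito W_1\ito W_2\ito\cdots$ where $W_k$ is generated by words of length at most $2k+1$, each $W_k\ito W_{k+1}$ being a cellular attachment along $\Vv$-cofibrations determined by $H^{\otimes k}$ and its boundary. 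The extension is constructed by induction on $k$: on $W_0=\Ff_*(\Iso)$ one uses $\alpha$ and $\beta$; the step from $W_0$ to $W_1$ uses the two basic homotopies of the homotopy equivalence; and each subsequent step is an application of Lemma \ref{parallellifting}, with $Y=\AA_f$ fibrant and the newly attached cell playing the role of the cofibration $\gamma$, together with the fibrant replacement $\AA\eqv\AA_f$ when necessary.

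The main obstacle, exactly as in \cite[Lemma 4.16]{BV}, lies in verifying that the boundary data required at each inductive step is \emph{actually present}, i.e.\ that the higher coherences demanded by $W(H,\Iso)$ can be manufactured from only the two basic homotopies. The key ingredient is the monoid structure $H\otimes H\to H$ on the interval $H$, in which $0\in H$ is neutral and $1\in H$ is absorbing: it allows an $H^{\otimes(k+1)}$-cell to be re-parametrised on its boundary as a composite of already-constructed cells of degree $\leq k$ glued along a single fresh $H$-cylinder, so that the pieces agree strictly as a $\Vv$-functor out of the attaching pushout and not merely up to homotopy. Once this bookkeeping is carried out, Lemma \ref{parallellifting} combined with fibrancy of $\AA_f$ yields each successive lift, the transfinite composition produces the desired $\Vv$-functor $W(H,\Iso)\to\AA_f$ extending $\alpha$, and this establishes the coherence axiom. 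No property of $\Vv$ beyond adequacy and cofibrancy of the unit (which are what gave us $H$ and $W(H,\Iso)$ in the first place) enters the argument, so the topological proof carries over verbatim.
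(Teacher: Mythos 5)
Your overall strategy coincides with the paper's --- extend $\alpha$ along the natural cofibration $\JJ\ito W(H,\Iso)$ by induction over the filtration $W_k(H,\Iso)$, invoking Lemma \ref{parallellifting} at each stage --- but your description of the inductive step has a genuine gap, and it sits exactly at the point on which the whole proof turns. You claim that the monoid structure of $H$ lets the boundary of each newly attached $k$-cube be re-parametrised as a composite of already-constructed lower cells, so that the attaching data ``agrees strictly'' before lifting. That is not the case, and even if it were the step would still fail: the boundary inclusion $H^{\otimes k}_\pm\to H^{\otimes k}$ is a cofibration but not a trivial one, so a map already defined on the whole boundary need not extend over the cube at all. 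In fact exactly one face $F$ of the new cube --- the one assigning waiting time $1$ to the first inner vertex, whose value is the precomposite of the not-yet-constructed higher homotopy $\beta_{k-1}$ with $\alpha_0$ --- carries genuinely new data, and the pair $(\alpha_k,\beta_{k-1})$ must be built \emph{in parallel}.

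Concretely, one decomposes $H^{\otimes k}_\pm=F\cup_{\partial F}L$ with $L$ the union of the remaining $2k-1$ faces, observes that $L\to H^{\otimes k}$ is a \emph{trivial} cofibration, and applies Lemma \ref{parallellifting} with $w=(\alpha_0)^*:\AA_f(y,x)\to\AA_f(x,x)$, precomposition with $\alpha_0$. This is the only place where the hypothesis that $\alpha$ is a homotopy equivalence enters: it is what makes $(\alpha_0)^*$ a weak equivalence between fibrant objects, so that Lemma \ref{weaklifting} produces $\beta_{k-1}$ up to a homotopy which the homotopy extension property then absorbs into the filler $\alpha_k$. Your proposal never identifies this weak equivalence --- you cast the fibrant replacement $\AA\eqv\AA_f$ as the weak equivalence of Lemma \ref{parallellifting} and the newly attached cell as the cofibration $\gamma$, neither of which is the intended instantiation --- and consequently it supplies no mechanism for manufacturing the higher homotopies $\beta_k$ at all. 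The monoid structure on $H$ plays a different role: it makes the attaching maps of the $W$-construction (elimination of inner vertices of waiting time $0$, concatenation at waiting time $1$) well defined; it does not furnish the missing boundary face.
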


\begin{proof}Since by Lemma \ref{H}, $\Vv$ possesses an interval $H$, it will be sufficient to show that any homotopy equivalence $\alpha:\JJ\to\AA_f$ extends along the natural cofibration $\JJ\ito W(H,\Iso)$. For a given interval $H$, the $W$-resolution $W(H,\Iso)$ of the $\Vv$-category $\Iso$ is constructed as a sequential colimit of reflexive $\Vv$-graphs on the vertex-set $\{0,1\}$ $$\cdots\ito W_k(H,\Iso)\ito W_{k+1}(H,\Iso)\ito\cdots$$where for each $k\geq 0$, the reflexive $\Vv$-graph $W_k(H,\Iso)$ is obtained from the reflexive $\Vv$-graph $W_{k-1}(H,\Iso)$ by attachment of two $k$-cubes $H^{\otimes k}$, one for each alternating string of length $k+1$,\begin{diagram}[small]\eps_0&\rTo^{\phi_0}&\eps_1&\rTo^{\phi_1}&\eps_2&\rTo&\cdots&\rTo&\eps_k&\rTo^{\phi_k}&\eps_{k+1}\end{diagram}where $\eps_i\in\{0,1\}$ and $\eps_i\not=\eps_{i+1}$ and $\phi_i$ stands for the ``single'' element of either $\Iso(0,1)$ or $\Iso(1,0)$ according to the value of $(\eps_i,\eps_{i+1})$. The vertex $\eps_0$ (resp. $\eps_{k+1}$) is the domain (resp. codomain) of the attached $k$-cube. The $k$-cube $H^{\otimes k}$ itself can be considered as a family of ``waiting times'' at the $k$ inner vertices of the corresponding string. The $\Vv$-category structure on $W(H,\Iso)$ is induced by concatenation of strings where waiting time $1:I_\Vv\to H$ is assigned to the vertex at which the two strings are concatenated. It is therefore convenient to assign waiting time $1:I_\Vv\to H$ to the outer vertices $\eps_0$ and $\eps_{k+1}$.

The structure of $W_k(H,\Iso)$ is determined inductively, by saying that for $k=0$, the two objects $W_0(H,\Iso)(\eps_0,\eps_1)$ are the unit $I_\Vv=H^{\otimes 0}$, while for $k>0$, the two $k$-cubes are attached to $W_{k-1}(H,\Iso)$ according to the following subdivided pushouts\begin{diagram}[small]H_-^{\otimes k}&\rTo&W_{k-1}(H,\Iso)(\eps_0,\eps_{k+1})\\\dTo&&\dTo\\H_\pm^{\otimes k}&\rTo&W^+_{k-1}(H,\Iso)(\eps_0,\eps_{k+1})\\\dTo&&\dTo\\H^{\otimes k}&\rTo&W_k(H,\Iso)(\eps_0,\eps_{k+1})\end{diagram}in which $H_-^{\otimes k}$ denotes the union of the $k$ faces of the $k$-cube $H^{\otimes k}$ obtained by inserting $0:I_\Vv\to H$ into each of the $k$ tensor factors of $H^{\otimes k}$, while $H_\pm^{\otimes k}$ denotes the whole boundary of the $k$-cube, i.e. the union of the $2k$ faces obtained by inserting $(0,1):I_\Vv\sqcup I_\Vv\ito H$ into each of the $k$ tensor factors of $H^{\otimes k}$. The attaching map $H_-^{\otimes k}\sqcup H_-^{\otimes k}\lrto W_{k-1}(H,\Iso)$ is defined by eliminating for each face the corresponding inner vertex from the string (this lowers the length of the string by $2$), and applying the monoid structure $H\otimes H\to H$ to the ``waiting times'' of the vertices surrounding the eliminated vertex. This definition is consistent precisely because $H$ is an interval in the sense of \cite[Definition 4.1]{BM1}.

Let us now consider a homotopy equivalence $\alpha:\JJ\to\AA_f$. The natural cofibration $\JJ\ito W(H,\Iso)$ identifies $\JJ(0,1)$ with $W_0(H,\Iso)(0,1)$, so that $\alpha$ amounts to the arrow $\alpha_0:I_\Vv=W_0(H,\Iso)(0,1)\to\AA_f(x,y)$. Any arrow $\beta_0:I_\Vv\to\AA_f(y,x)$ amounts to an extension of $\alpha$ to $W_0(H,\Iso)$. A further extension to $W_1(H,\Iso)$ amounts to a pair $(\alpha_1:H\to\AA_f(x,x),\beta_1:H\to\AA_f(y,y))$ of homotopies relating the identity of $x$ (resp. $y$) to the composite map $\beta_0\alpha_0$ (resp. $\alpha_0\beta_0$). Extending inductively $\alpha$ over $W_k(H,\Iso)$ amounts to the construction of ``higher homotopies'' $$\begin{cases}(\alpha_k:H^{\otimes k}\to\AA_f(x,y),\,\beta_k:H^{\otimes k}\to\AA_f(y,x))&\text{if }k\text{ is even,}\\(\alpha_k:H^{\otimes k}\to\AA_f(x,x),\,\beta_k:H^{\otimes k}\to\AA_f(y,y))&\text{if }k\text{ is odd,}\end{cases}$$satisfying certain coherence relations.

We now describe the precise inductive procedure to extend $\alpha$ to the whole $\Vv$-interval $W(H,\Iso)$. As in the proof of \cite[Lemma 5.4]{BM1} we inductively construct maps of $\Vv$-graphs $W_k(H,\Iso)\to\AA_f$ which are compatible with the \emph{partial $\Vv$-category} structure of $W_k(H,\Iso)$, cf. \cite[Definition 5.2]{BM1}. More precisely, the compatibility of $W_{k-1}(H,\Iso)\to\AA_f$ with the partial $\Vv$-category structure of $W_{k-1}(H,\Iso)$ allows a canonical extension along $W_{k-1}(H,\Iso)\ito W^+_{k-1}(H,\Iso)$. For the inductive step it then remains to be shown that the induced maps$$\begin{cases}H_\pm^{\otimes k}\to\AA_f(x,y),\text{ resp. }H_\pm^{\otimes k}\to\AA_f(y,x)&\text{if }k\text{ is even,}\\H_\pm^{\otimes k}\to\AA_f(x,x),\text{ resp. }H_\pm^{\otimes k}\to\AA_f(y,y)&\text{if }k\text{ is odd,}\end{cases}$$may be extended to the whole $k$-cube $H^{\otimes k}$, thus defining the higher homotopies $\alpha_k$, resp. $\beta_k$, together with the required extension along $W_{k-1}^+(H,\Iso)\ito W_k(H,\Iso)$. It turns out that in order to keep track of the necessary coherence relations it is best to construct the pair $(\alpha_k,\beta_{k-1})$ in parallel, assuming inductively that $(\alpha_j,\beta_{j-1})$ have already been defined for $j<k$. We will treat the case of odd $k$ explicitly, and leave the similar case of even $k$ to the reader.

We shall use Lemma \ref{parallellifting} as well as a suitable decomposition of the boundary $H_\pm^{\otimes k}$ of the $k$-cube $H^{\otimes k}$. For odd $k$, the string corresponding to this $k$-cube has the following form\begin{diagram}[small]0&\rTo^{\phi}&1&\rTo^{\psi}&0&\rTo&\cdots&\rTo&1&\rTo^{\psi}&0\end{diagram}where $\phi$ is taken to $\alpha_0:I_\Vv\to\AA_f(x,y)$ and $\psi$ is taken to $\beta_0:I_\Vv\to\AA_f(y,x)$. We shall denote by $F$ the face obtained by assigning waiting time $1$ to the \emph{first} inner vertex of the string. We shall denote by $L$ the union of the remaining $2k-1$ faces. $F$ is a $(k-1)$-cube $H^{\otimes (k-1)}$. Its boundary $\partial F=H^{\otimes(k-1)}_\pm$ embeds canonically into $F$ and into $L$. We thus get the following commutative diagram in $\Vv$:\begin{diagram}[small,UO]&&\AA_f(y,x)&&\\j\!\!\!\!&\ruTo&&\rdTo&\!\!\!\!{(\alpha_0)^*}\\\partial F&\rTo&L&\rTo^l&\AA_f(x,x)\\\dTo^\gamma&&\dTo_{\gamma'}&&\\F&\rTo_\delta&H^{\otimes k}&&\end{diagram}
Since $\alpha:\JJ\to\AA_f$ is a homotopy equivalence, precomposition with $\alpha_0$ acts as a weak equivalence. The maps $j$ and $l$ are defined by induction hypothesis. Since $H$ is an interval, $\gamma$ is a cofibration between cofibrant objects, $\gamma'$ is a trivial cofibration between cofibrant objects. Moreover, $(F\cup_{\partial F}L)=H^{\otimes k}_\pm\to H^{\otimes k}$ is a cofibration. We thus get by Lemma \ref{parallellifting} a pair of liftings$$(\beta_{k-1}:F=H^{\otimes(k-1)}\to\AA_f(y,x),\,\alpha_k:H^{\otimes k}\to\AA_f(x,x))$$ as required for the inductive step.\end{proof}

\section{Enriched categories with two objects}\label{s3}

The goal of this section is to give complete proofs of the Interval Cofibrancy Theorem \ref{cofibrant} and the Interval Amalgamation Lemma \ref{IAL}, which are essential ingredients for the canonical model structure on $\VCat$ as we have seen.

\begin{stit}\emph{Notation.} For any $\Vv$-category $\HH$ with object set $\{0,1\}$, we write $\HH(i,j)$ for the hom-object (in $\Vv$) of maps from $i$ to $j$ in $\HH$, and abbreviate $\HH(i,i)$ to $\HH_i$. Moreover, we write$$\partial\HH_1=\HH(0,1)\otimes_{\HH_0}\HH(1,0)$$and$$\partial\HH_0=\HH(1,0)\otimes_{\HH_1}\HH(0,1).$$Here, we use that $\HH(0,1)$ has compatible right $\HH_0$- and left $\HH_1$-actions, i.e. $\HH(0,1)$ is a \emph{$\HH_1$-$\HH_0$-bimodule}. Symmetrically, $\HH(1,0)$ is a \emph{$\HH_0$-$\HH_1$-bimodule}. The tensors $\otimes_{\HH_0}$ and $\otimes_{\HH_1}$ are defined as usual by certain coequalizers involving the monoidal structure of $\Vv$. Composition in $\HH$ induces maps $\partial\HH_1\to\HH_1$ and $\partial\HH_0\to\HH_0$.

Conversely, the structure of a $\Vv$-category on $\{0,1\}$ can be recovered from the sixtuple $(\HH_0,\HH_1,\HH(0,1),\HH(1,0),\partial\HH_1\overset{c_1}{\to}\HH_1,\partial\HH_0\overset{c_0}{\to}\HH_0)$ consisting of two monoids $\HH_0,\HH_1$, an $\HH_1$-$\HH_0$-bimodule $\HH(0,1)$, an $\HH_0$-$\HH_1$-bimodule $\HH(1,0)$, and two maps of bimodules $c_1$ and $c_0$ which satisfy the following compatibility relations:\begin{diagram}[small]\HH(0,1)\otimes_{\HH_0}\partial\HH_0&\rTo^\cong&\partial\HH_1\otimes_{\HH_1}\HH(0,1)&\quad\quad&\HH(1,0)\otimes_{\HH_1}\partial\HH_1&\rTo^\cong&\partial\HH_0\otimes_{\HH_0}\HH(1,0)\\\dTo^{id\otimes c_0}&&\dTo_{c_1\otimes id}&\quad\quad&\dTo^{id\otimes c_1}&&\dTo_{c_0\otimes id}\\\HH(0,1)\otimes_{\HH_0}\HH_0&\rTo_\cong&H_1\otimes_{\HH_1}\HH(0,1)&\quad\quad&\HH(1,0)\otimes_{\HH_1}\HH_1&\rTo_\cong&H_0\otimes_{\HH_0}\HH(1,0)\end{diagram}\end{stit}

The following slightly more elaborate version of Theorem \ref{cofibrant} will be established by a transfinite induction in which part (iii) plays an essential role.

\begin{thm}\label{mainthm}Assume that $\Vv$ is an adequate monoidal model category with cofibrant unit. Let $\HH$ be a cofibrant $\Vv$-category in $\VCat_{\{0,1\}}$. Then\begin{itemize}\item[(i)]$\HH_0$ and $\HH_1$ are cofibrant monoids;\item[(ii)]$\HH(0,1)$ is cofibrant as a right $\HH_0$-module and as a left $\HH_1$-module;\\$\HH(1,0)$ is cofibrant as a right $\HH_1$-module and as a left $\HH_0$-module;\item[(iii)]The maps $\partial\HH_0\to\HH_0$ and $\partial\HH_1\to\HH_1$ are cofibrations between cofibrant objects in $\Vv$.\end{itemize}\end{thm}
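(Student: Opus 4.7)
The plan is a transfinite induction on a cellular presentation of $\HH$ in $\VCat_{\{0,1\}}$: since $\HH$ is cofibrant, it is a retract of a transfinite composition $\HH^{(0)}\ito\HH^{(1)}\ito\cdots$ starting from the initial $\Vv$-category on $\{0,1\}$ (which has $\HH^{(0)}_0=\HH^{(0)}_1=I_\Vv$ and $\HH^{(0)}(0,1)=\HH^{(0)}(1,0)=\emptyset_\Vv$), in which each stage is a pushout along one of the generating cofibrations $\JJ_{i,j}[X]\to\JJ_{i,j}[Y]$ of $\VCat_{\{0,1\}}$. The base case is immediate from the cofibrancy of $I_\Vv$: $I_\Vv$ is a cofibrant monoid, the empty bimodules are trivially cofibrant, and the composition maps $\emptyset_\Vv\to I_\Vv$ are cofibrations between cofibrant objects. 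Properties (i)--(iii) all pass to retracts (for (iii) because cofibrations do), and to transfinite compositions: the adequacy of $\Vv$ provides transferred model structures on monoids and on modules, in which cofibrant monoids and cofibrations of modules are closed under transfinite composition, while Lemma \ref{transfinite} together with closure of $\otimes$-cofibrations under transfinite composition handles the cofibration part of (iii).

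It therefore suffices to check that (i)--(iii) are preserved under a single cobase change along a generating cofibration $\JJ_{i,j}[X]\to\JJ_{i,j}[Y]$, assuming them inductively for $\HH$. The four cases split by $(i,j)\in\{0,1\}^2$. The diagonal cases $(0,0)$ and $(1,1)$ correspond to attaching a ``loop cell''; the new $\HH'_i$ is a free extension of $\HH_i$ along $X\to Y$ in the transferred model structure on monoids, and the induced effects on $\HH'(0,1)$, $\HH'(1,0)$ and on the composites $\partial\HH'_0\to\HH'_0$, $\partial\HH'_1\to\HH'_1$ are read off from the Rezk--Schwede--Shipley filtration for free extensions already used in Lemma \ref{cofibration} (see Section \ref{examples}e). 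Combined with the inductive bimodule-cofibrancy (ii), this yields (i)--(iii) for $\HH'$ in these cases.

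The off-diagonal cases $(i,j)=(0,1)$ and $(1,0)$ are the heart of the argument and the principal obstacle. Attaching a new generator $y\colon 0\to 1$ modifies \emph{all four} hom-objects simultaneously, because the new morphisms in $\HH'$ are alternating words $y,g_1,y,g_2,\dots$ with the $g_k$ old morphisms $1\to 0$. The plan is to filter the pushout $\HH\to\HH'$ by the number of occurrences of the new generator, obtaining a sequence $\HH=\HH'_{(0)}\ito\HH'_{(1)}\ito\cdots$, and to verify (i)--(iii) at each stage in parallel for both endomorphism monoids, both bimodules, and both composition maps. At stage $k$, the attaching map is assembled from $X\to Y$, the bimodule $\HH(1,0)$, the monoids $\HH_0,\HH_1$, and, crucially, from the composition maps $\partial\HH_i\to\HH_i$ themselves, via pushout-product with tensors over $\HH_0$ and $\HH_1$. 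This is precisely where hypothesis (iii) from the inductive step is indispensable: together with the pushout-product axiom in $\Vv$, the $\otimes$-cofibrancy of $\partial\HH_i\to\HH_i$ between cofibrant objects forces each stagewise attaching map to be itself a $\otimes$-cofibration between cofibrant objects, so that the cofibrancy of monoids, of modules, and of the composition maps all propagate from $\HH'_{(k)}$ to $\HH'_{(k+1)}$. Passing to the colimit (legitimate by the transfinite closure established above) yields (i)--(iii) for $\HH'$ and closes the induction. The hard bookkeeping, which will occupy the technical core of Section \ref{s3}, is the explicit identification of the stagewise attaching maps and verification that the combinatorics of alternating words is compatible with the bimodule identifications $\partial\HH_1=\HH(0,1)\otimes_{\HH_0}\HH(1,0)$ and $\partial\HH_0=\HH(1,0)\otimes_{\HH_1}\HH(0,1)$.
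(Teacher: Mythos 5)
Your overall architecture coincides with the paper's: induction over a cellular presentation of $\HH$, closure under retracts, and a case analysis of a single cobase change along $\JJ_{i,j}[X]\to\JJ_{i,j}[Y]$, with the off-diagonal case analysed by filtering by the number of occurrences of the new generator. However, you have misplaced the main difficulty, and the diagonal case is where your proposal has a genuine gap. When one attaches a loop cell at the object $0$, the new endomorphism monoid $\KK_0$ is indeed a free monoid extension of $\HH_0$ and is handled by the Rezk--Schwede--Shipley filtration, as you say; but the endomorphism monoid at the \emph{other} object changes too, since new endomorphisms of $1$ arise as composites $1\to 0\to\cdots\to 0\to 1$ through the new cells. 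The resulting $\KK_1$ is \emph{not} a free monoid extension of $\HH_1$: it is only presented as a pushout of $\HH_1$-bimodules $\KK_1=\HH_1\cup_{\partial\HH_1}\bigl(\HH(0,1)\otimes_{\HH_0}\KK_0\otimes_{\HH_0}\HH(1,0)\bigr)$, and its natural filtration $\KK_1^{(0)}\to\KK_1^{(1)}\to\cdots$ is a filtration by $\HH_1$-bimodules that is \emph{not} a filtration by monoids. Proving that $\HH_1\to\KK_1$ is a cofibration of monoids therefore cannot be ``read off'' from the free-extension machinery; the paper has to construct lifts stage by stage subject to an extra multiplicativity constraint, interpolating refined stages $\KK_{1,\partial}^{(p)}$ built from the cofibration $\partial\HH_0\to\HH_0$ (this is where hypothesis (iii) is really indispensable, rather than in the off-diagonal case as you claim). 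Similarly, establishing that $\partial\KK_0\to\KK_0$ is a cofibration in this case requires identifying $\partial\KK_0=\KK(1,0)\otimes_{\KK_1}\KK(0,1)$ --- a coequaliser over the \emph{new} monoid $\KK_1$ --- with an explicitly filtered colimit $D$, a nontrivial comparison your sketch does not address.

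There is a second gap in your treatment of transfinite composition. You appeal to ``closure of cofibrations of modules under transfinite composition,'' but the base monoids change at every stage: $\HH^{(n)}(0,1)$ is a module over $\HH^{(n)}_0$, not over the colimit $\HH_0$. To pass to the colimit one needs, at each stage, the \emph{relative} statements that $\HH^{(n)}_0\to\HH^{(n+1)}_0$ is a cofibration of monoids and that the base-changed comparison map $\HH^{(n+1)}_0\otimes_{\HH^{(n)}_0}\HH^{(n)}(0,1)\to\HH^{(n+1)}(0,1)$ is a cofibration of $\HH^{(n+1)}_0$-modules; these must be built into the inductive hypothesis and verified in each case of the cobase-change analysis. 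Likewise, for (iii) the colimit of a ladder of cofibrations $\partial\HH^{(n)}_0\to\HH^{(n)}_0$ need not be a cofibration unless each square in the ladder is a pushout (or at least has cofibrant comparison map), which again must be proved case by case; your appeal to Lemma \ref{transfinite} is beside the point here, since that lemma concerns weak equivalences which are $\otimes$-cofibrations and plays no role in Theorem \ref{mainthm}.
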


Special cases of this theorem are known. For instance, if $\Vv$ is the category of simplicial sets, then (i) was proved by Dwyer-Kan \cite{DK2}, and used by Bergner \cite{Be1} in her proof of the canonical model structure on simplicially enriched categories. It is natural to ask whether our methods extend, to prove a more general theorem for $\Vv$-categories on an arbitrary fixed object set $S$. We have not investigated this.

\subsection{Some excellent Quillen pairs}\label{examples}--\vspace{1ex}

We assume throughout that $\Vv$ is an \emph{adequate monoidal model category with cofibrant unit}. As mentioned in the introduction, adequacy implies the existence of a transferred Quillen model structure on ``structured objects'' in $\Vv$. Some instances of this are important for the proof of Theorem \ref{mainthm} and we discuss them now. It turns out that the corresponding Quillen pairs (formed by the free and forgetful functors) have the property that the right adjoint not only preserves \emph{and reflects} weak equivalences and fibrations (as in any transfer) but also \emph{preserves cofibrant objects}. Any Quillen pair with such a right adjoint will be called \emph{excellent}. It follows immediately from this definition that excellent Quillen pairs compose. Note that in establishing that the Quillen pairs below are excellent, it is essential that the unit $I_\Vv$ is cofibrant in $\Vv$.\vspace{1ex}

(a) Let $R$ be a monoid in $\Vv$, and assume that $R$ is well-pointed (i.e. the unit $I_\Vv\to R$ is a cofibration in $\Vv$). Then the categories ${}_R\Mod$ and $\Mod_R$ of left and right $R$-modules both admit a transferred model structure and the forgetful functors are part of excellent Quillen pairs.\vspace{1ex}

(b) For well-pointed monoids $R$ and $S$, the category ${}_R\Mod_S$ of $R$-$S$-bimodules admits a transferred model structure. The forgetful functor is again part of an excellent Quillen pair. This follows from the previous example by considering the monoid $R\otimes S^\op$.\vspace{1ex}

(c) The category $\Mon_\Vv$ of monoids in $\Vv$ admits a transferred model structure and the free-forgetful adjunction  $T:\Vv\lra\Mon_\Vv:U$ is an excellent Quillen pair. The preservation of cofibrant objects and cofibrations between cofibrant objects under the forgetful functor follows either from \cite[Corollary 5.5]{BM0} or, more directly, from the explicit construction of free monoid extensions as described by Rezk-Schwede-Shipley \cite{SS}. We review their construction in some detail here since similar constructions will be used repeatedly in the proof of Theorem \ref{mainthm}.

Let $R$ be a monoid in $\Vv$, and let $u:Y_0\to Y_1$ be a map in $\Vv$ equipped with a map $Y_0\to U(R)$. The \emph{free monoid extension $R[u]$ generated by $u$} is defined by the following pushout in monoids
\begin{gather*}\begin{diagram}[small]T(Y_0)&\rTo&R\\\dTo&&\dTo\\T(Y_1)&\rTo&\NWpbk R[u]\end{diagram}\end{gather*}in which the upper horizontal arrow is adjoint to the given $Y_0\to U(R)$. The crucial observation of Rezk-Schwede-Shipley \cite{SS} is that this pushout in monoids can be realised as a sequential colimit of pushouts in the category ${}_R\Mod_R$ of $R$-bimodules.

If $F_R:\Vv\to{}_R\Mod_R$ denotes the free $R$-bimodule functor, then the construction goes as follows. Let $R[u]^{(0)}=R$ and define inductively $R[u]^{(n)}$ by the following pushout in $R$-bimodules
\begin{gather*}\begin{diagram}[small]Y^{(n)}_-&\rTo&R[u]^{(n-1)}\\\dTo&&\dTo\\Y^{(n)}&\rTo&\NWpbk R[u]^{(n)}\end{diagram}\end{gather*}
where $$Y^{(n)}=\overbrace{F_R(Y_1)\otimes_R\cdots\otimes_RF_R(Y_1)}^n=R\otimes \overbrace{Y_1\otimes R\otimes\cdots\otimes Y_1\otimes R}^n$$
and $Y_-^{(n)}$ is the colimit of a diagram over a punctured $n$-cube $\{0,1\}^n-\{(1,\dots,1)\}$ in which the vertex $(i_1,\dots,i_n)$ takes the value $F_R(Y_{i_1})\otimes_R\cdots\otimes_RF_R(Y_{i_n})$ and the edge-maps are induced by $F_R(u)$. The map $Y^{(n)}_-\to Y^{(n)}$ is the comparison map from the colimit of this diagram to the value at $(1,\dots,1)$ of the extended diagram on the whole $n$-cube. The map $Y^{(n)}_-\to R[u]^{(n-1)}$ is defined inductively, using the fact that the construction of $R[u]^{(n-1)}$ involves $n-1$ tensor factors $F_R(Y_1)$ only.

Since the tensor $-\otimes_R-$ commutes with pushouts in both variables, there are canonical maps of $R$-bimodules $R[u]^{(p)}\otimes_R R[u]^{(q)}\to R[u]^{(p+q)}$. Since the tensor $-\otimes_R-$ commutes with countable sequential colimits in both variables, these maps induce the structure of a monoid on the colimit $R[u]=\lim_nR[u]^{(n)}$. It is straightforward to check that this monoid has indeed the required universal property.

Now, any cofibrant monoid is constructed out of the inital monoid $I_\Vv$ by transfinite composition of free monoid extensions along $T(u)$ where $u$ is a generating cofibration in $\Vv$, and taking retracts thereof. Assuming inductively that $R$ has an underlying cofibrant object (which we can, since $I_\Vv$ is cofibrant in $\Vv$), the pushout-product axiom implies that $Y_-^{(n)}\to Y^{(n)}$ and hence $R[u]^{(n-1)}\to R[u]^{(n)}$ are cofibrations in $\Vv$. It follows that $R\to R[u]$ is a cofibration in $\Vv$ so that (by induction) any cofibrant monoid has an underlying cofibrant object. Note that a similar argument shows that the forgetful functor takes any cofibration between cofibrant monoids to a cofibration between cofibrant objects in $\Vv$.\vspace{1ex}

(d) For a monoid $R$ in $\Vv$ let $\Alg_R$ be the category of monoids in $\Vv$ under $R$. This category inherits a model structure as an undercategory of the preceding example. There is an obvious forgetful functor $U_R:\Alg_R\to {}_R\Mod_R$ whose left adjoint $T_R:{}_R\Mod_R\to \Alg_R$ is given by$$T_R(N)=\coprod_{n\geq 0}\overbrace{N\otimes_R\cdots\otimes_RN}^n$$and this adjoint pair is a Quillen pair. The model structure on $\Alg_R$ coincides with the one obtained by transfer along this adjoint pair from the transferred model structure on $R$-bimodules. As in the preceding example, $T_R$-free extensions in $\Alg_R$ can be computed as sequential colimits of pushouts in ${}_R\Mod_R$. Note that $(T_R,U_R)$ is an excellent Quillen pair if and only if $R$ is cofibrant as an $R$-bimodule.\vspace{1ex}

(e) Let $S$ be a fixed set of objects. Then the category $\VCat_S$ of $\Vv$-categories with fixed object set $S$ admits a transferred model structure with generating (trivial) cofibrations of the form$$\JJ_{s,t}[X]\to\JJ_{s,t}[Y]$$where $X\to Y$ is a generating (trivial) cofibration in $\Vv$, and $s,t\in S$. The $\Vv$-category $\JJ_{s,t}[X]$ is characterized by the universal property that maps $\JJ_{s,t}[X]\to\AA$ into any $\Vv$-category $\AA$ on $S$ are in bijective correspondence with maps $X\to\AA(s,t)$ in $\Vv$. If $s\not=t$, then $\JJ_{s,t}[X]$ has $X$ as hom-object from $s$ to $t$, and only identity arrows elsewhere; if $s=t$, $\JJ_{s,t}[X]$ has the free monoid on $X$ as endo-hom-object of $s$ and identity arrows elsewhere. For each $s\in S$ there is a forgetful functor $$(-)_s:\VCat_S\to\Mon_\Vv$$mapping $\AA$ to the endomorphism-monoid $\AA_s$. For each pair $s,t$ of elements of $S$, there is a forgetful functor$$(-)(s,t):\VCat_S\to{}_{\AA_t}\Mod_{\AA_s}$$ mapping $\AA$ to the $\AA_t$-$\AA_s$-bimodule $\AA(s,t)$. Both functors are right Quillen functors.

These right adjoints, when composed with the appropriate forgetful functors to $\Vv$, preserve cofibrant objects. More precisely, the adjoint pair $$\cat_S:\VGrph_S\lra\VCat_S:U_S$$ is an excellent Quillen pair, where a \emph{$\Vv$-graph} $\AA$ on $S$ is  by definition a doubly indexed family $(\AA(s,t))_{(s,t)\in S^2}$ of objects of $\Vv$. The model structure on $\VGrph_S$ is the one induced from $\Vv$ through the isomorphism $\VGrph_S\cong\Vv^{S^2}$. The forgetful functor $U_S$ takes a $\Vv$-category to the obvious underlying $\Vv$-graph. It is well-known that this forgetful functor preserves filtered colimits, and that $\Vv$-categories on $S$ can be identified with \emph{monoids} in $\VGrph_S$ with respect to the following \emph{circle-product}:$$(\BB\circ\AA)(r,t)=\coprod_{s\in S}\BB(s,t)\otimes\AA(r,s)$$This circle-product is not symmetric, but satisfies all formal properties needed to construct free $\circ$-monoid extensions as sequential colimits in $\VGrph_S$, exactly like in the one object case treated in Section \ref{examples}c, cf. Schwede-Shipley \cite[Section 6.2]{SS2}. In particular, an induction on the construction of cofibrant objects in $\VCat_S$ shows that the forgetful functor $U_S$ preserves cofibrant objects. Theorem \ref{mainthm} considerably refines this preservation property of $U_S$ in the case $S=\{0,1\}$.\vspace{1ex}

%For any injective mapping $f:S\to T$ of colour-sets, we get the following commuting square of Quillen pairs\begin{diagram[small]\VCat_S&\pile{\lTo^{f^*}\\\rTo_{f_!}}&\VCat_T\\\dTo^{U_S}\uTo_{cat_S}&&\uTo^{cat_S}\dTo_{U_T}\\\VGrph_S&\pile{\rTo^{f_!}\\\lTo_{f^*}}&\VGrph_T\end{diagram}in which the two vertical Quillen pairs as well as the lower horizontal Quillen pair are excellent. The main property (i) of Theorem \ref{mainthm} can be restated by saying that the upper horizontal Quillen pair is excellent for $T=\{0,1\}$ and $S=\{0\}$ or $\{1\}$. Dwyer and Kan \cite{DK2} establish this property in the case of simplicial categories for all injective functions $f:S\to T$, thanks to the well-known characterisation of cofibrant simplicial categories. The lack of such a characterisation for general $\Vv$ forces us to analyse in detail certain pushouts in $\VCat$.

The following two lemmas are preparatory for the proof of Theorem \ref{mainthm}.

\begin{lma}\label{cofibrant2}Let $R$ be a well-pointed monoid in a monoidal model category $\,\Vv$ with cofibrant unit, and let $M$ (resp. $N$) be a cofibrant right (resp. left) $R$-module. Then, the functors $M\otimes_R-:{}_R\Mod\to\Vv$ and $-\otimes_RN:\Mod_R\to\Vv$ are left Quillen functors. In particular, the object $M\otimes_RN$ is cofibrant in $\Vv$.
\begin{proof}The second assertion follows from the first. For the first, note that the underlying objects of $M$ and $N$ are cofibrant, cf. Section \ref{examples}a, so that the right adjoints of $M\otimes_R-$ and of $-\otimes_RN$ preserve fibrations and trivial fibrations by the adjoint form of the pushout-product axiom.\end{proof}\end{lma}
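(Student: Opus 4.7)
The second assertion will be an immediate consequence of the first, since a left Quillen functor sends cofibrant objects to cofibrant objects, and $N$ is cofibrant in ${}_R\Mod$. So the heart of the matter is to establish that $M\otimes_R-:{}_R\Mod\to\Vv$ (and, symmetrically, $-\otimes_RN:\Mod_R\to\Vv$) is left Quillen, and I plan to do this by verifying the adjoint condition on the right adjoint.

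The right adjoint of $M\otimes_R-$ is the internal-hom functor $[M,-]:\Vv\to{}_R\Mod$, equipped with the left $R$-action induced by the right $R$-action on $M$. Because the transferred model structure on ${}_R\Mod$ (cf.\ Section \ref{examples}(a)) is detected on fibrations and weak equivalences by the forgetful functor to $\Vv$, it suffices to show that the underlying $\Vv$-valued functor $[M,-]:\Vv\to\Vv$ preserves fibrations and trivial fibrations. Here the key input is that $M$, being a cofibrant right $R$-module over a well-pointed monoid $R$ with cofibrant unit $I_\Vv$, has a cofibrant underlying object in $\Vv$: this is exactly the excellence of the Quillen pair in Section \ref{examples}(a). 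Once we know that the underlying object of $M$ is cofibrant in $\Vv$, the adjoint form of the pushout-product axiom for $\Vv$ immediately yields that $[M,-]$ preserves fibrations and trivial fibrations.

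Putting the two observations together gives that $[M,-]:\Vv\to{}_R\Mod$ preserves fibrations and trivial fibrations, hence is right Quillen, and so $M\otimes_R-$ is left Quillen. The argument for $-\otimes_R N$ is entirely symmetric, using that cofibrant left $R$-modules have cofibrant underlying objects. Finally, applying $M\otimes_R-$ to the cofibrant left $R$-module $N$ gives the cofibrancy of $M\otimes_R N$ in $\Vv$.

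The only subtlety—and the place where I expect the reader most needs to be alert—is the step in which cofibrancy of $M$ as an $R$-module is upgraded to cofibrancy of $M$ as an object of $\Vv$. This is not a formal consequence of the transfer, but rather the genuine content of Section \ref{examples}(a), which in turn depends on the well-pointedness of $R$ and the cofibrancy of $I_\Vv$. Everything else is a routine application of the pushout-product axiom in its adjoint formulation.
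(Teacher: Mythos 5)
Your proposal is correct and follows essentially the same route as the paper's own proof: both reduce the left Quillen property to the cofibrancy of the underlying objects of $M$ and $N$ (the content of Section \ref{examples}(a), relying on well-pointedness of $R$ and cofibrancy of $I_\Vv$) and then invoke the adjoint form of the pushout-product axiom. Your version merely spells out the right adjoint and the transfer argument more explicitly.
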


\begin{lma}\label{bimoduleadjoint}For any $\Vv$-category $\HH$ with object-set $\{0,1\}$ and any morphism of monoids $\HH_0\to\KK_0$, the following pushout in $\HH_1$-bimodules\begin{diagram}[small]\HH(0,1)\otimes_{\HH_0}\HH_0\otimes_{\HH_0}\HH(1,0)&\rTo&\HH_1\\\dTo&&\dTo\\\HH(0,1)\otimes_{\HH_0}\KK_0\otimes_{\HH_0}\HH(1,0)&\rTo&\KK_1\end{diagram}
endows $\KK_1$ with a canonical structure of monoid under $\HH_1$.

\begin{proof}The unit of $\KK_1$ is the composite $I_\Vv\to\HH_1\to\KK_1$, the multiplication $\KK_1\otimes_{\HH_1}\KK_1\to\KK_1$ is induced by pasting together the left and right $\HH_1$-module structures of $\HH(0,1)\otimes_{\HH_0}\KK_0\otimes_{\HH_0}\HH(1,0)$, the monoid structure of $\HH_1$ and the following map:\begin{diagram}[small](\HH(0,1)\otimes_{\HH_0}\KK_0\otimes_{\HH_0}\HH(1,0))\otimes_{\HH_1}(\HH(1,0)\otimes_{\HH_0}\KK_0\otimes_{\HH_0}\HH(0,1))\\\dTo^\cong\\
\HH(0,1)\otimes_{\HH_0}\KK_0\otimes_{\HH_0}\partial\HH_0\otimes_{\HH_0}\KK_0\otimes_{\HH_0}\HH(0,1)\\\dTo\\
\HH(0,1)\otimes_{\HH_0}\KK_0\otimes_{\HH_0}\HH_0\otimes_{\HH_0}\KK_0\otimes_{\HH_0}\HH(0,1)\\\dTo\\
\HH(0,1)\otimes_{\HH_0}\KK_0\otimes_{\HH_0}\HH(0,1)\end{diagram}\end{proof}\end{lma}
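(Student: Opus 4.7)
The plan is to use the universal property of the defining pushout to produce both the unit and the multiplication on $\KK_1$, and then to reduce the monoid axioms to the analogous axioms for composition in the $\Vv$-category $\HH$ and for multiplication in $\KK_0$. The unit is forced: take $I_\Vv \to \KK_1$ to be the composite $I_\Vv \to \HH_1 \to \KK_1$ of the unit of $\HH_1$ with the right vertical arrow of the pushout, so that $\HH_1 \to \KK_1$ is compatible with units by construction. Giving a compatible monoid structure on $\KK_1$ then amounts to giving an associative, unital map $\mu : \KK_1 \otimes_{\HH_1} \KK_1 \to \KK_1$ of $\HH_1$-bimodules.

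For the multiplication, write $M = \HH(0,1) \otimes_{\HH_0} \KK_0 \otimes_{\HH_0} \HH(1,0)$ for brevity. Because tensoring over $\HH_1$ preserves colimits, $\KK_1 \otimes_{\HH_1} \KK_1$ is built as a pushout from the four pieces $\HH_1 \otimes_{\HH_1} \HH_1 = \HH_1$, $\HH_1 \otimes_{\HH_1} M = M$, $M \otimes_{\HH_1} \HH_1 = M$, and $M \otimes_{\HH_1} M$. On the first three pieces the candidate multiplication is given by the multiplication of $\HH_1$ and by the $\HH_1$-bimodule structure of $M$, the outputs landing in $\HH_1$ or $M$ and then in $\KK_1$ through the pushout. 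On the fourth piece, use the canonical identification $\HH(1,0) \otimes_{\HH_1} \HH(0,1) \cong \partial\HH_0$ to rewrite $M \otimes_{\HH_1} M$ as $\HH(0,1) \otimes_{\HH_0} \KK_0 \otimes_{\HH_0} \partial\HH_0 \otimes_{\HH_0} \KK_0 \otimes_{\HH_0} \HH(0,1)$, apply $c_0 : \partial\HH_0 \to \HH_0$ to the middle factor, and then use the monoid multiplication of $\KK_0$ (together with the given morphism $\HH_0 \to \KK_0$) to collapse $\KK_0 \otimes_{\HH_0} \HH_0 \otimes_{\HH_0} \KK_0$ down to $\KK_0$; the resulting map factors through $M$ and hence into $\KK_1$.

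Checking that these pieces assemble to a single well-defined $\mu$ is the main computational step. By the universal property of the various pushouts, it reduces to verifying a finite list of compatibility squares: precisely the squares relating $c_0$, $c_1$ and the left/right $\HH_i$-actions on $\HH(0,1)$ and $\HH(1,0)$ displayed in the Notation paragraph, together with the fact that $\HH_0 \to \KK_0$ is a morphism of monoids. Associativity of $\mu$ then reduces on the ``pure $M$'' summand to associativity of composition in the $\Vv$-category $\HH$ combined with associativity of the product in $\KK_0$; the ``mixed'' cases come down to the same compatibility identities plus bimodule associativity. Unitality is immediate from the choice of unit.

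The only genuine difficulty is bookkeeping: there is one pushout relation per tensor factor involving $\HH_0$, and the recipe for $\mu$ must be checked against each of them. No homotopical input is needed, however --- the result is a purely formal consequence of $\HH$ being a $\Vv$-category in the algebraic sense spelled out at the start of the section, plus the fact that $\HH_0 \to \KK_0$ respects units and products.
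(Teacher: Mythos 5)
Your proposal is correct and follows essentially the same route as the paper: the unit is the composite $I_\Vv\to\HH_1\to\KK_1$, and the multiplication is assembled from the monoid structure of $\HH_1$, the $\HH_1$-bimodule structure of $\HH(0,1)\otimes_{\HH_0}\KK_0\otimes_{\HH_0}\HH(1,0)$, and on the remaining piece the map that identifies the middle $\HH(1,0)\otimes_{\HH_1}\HH(0,1)$ with $\partial\HH_0$, applies $c_0$, and collapses via the multiplication of $\KK_0$. You in fact spell out the well-definedness and associativity checks in more detail than the paper does.
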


\subsection{Proof of Theorem \ref{mainthm}}\label{proofICT}--\vspace{1ex}

As usual, the cofibrant objects of $\VCat_{\{0,1\}}$ are built up from the initial object by taking (possibly transfinite) compositions of pushouts along generating cofibrations, and retracts thereof. We will prove the theorem by induction on the construction of $\HH$. We will be careful to establish the necessary properties for transfinite composition in the inductive step.

Note that the initial $\Vv$-category on $\{0,1\}$ has the properties stated in the theorem since the unit $I_\Vv$ is supposed to be cofibrant in $\Vv$. Let us begin by checking that the properties stated in the theorem are preserved under retract. If$$i:\HH\lra\KK:r$$ makes $\HH$ a retract of $\KK$, then the monoids $\HH_0$ and $\HH_1$ are retracts of the monoids $\KK_0$ and $\KK_1$. Moreover, $i$ and $r$ give maps of monoids $i_0:\HH_0\lra\KK_0:r_0$ and maps$$\alpha:\HH(0,1)\to i_0^*\KK(0,1)\quad\textrm{and}\quad\beta:\KK(0,1)\to r_0^*\HH(0,1)$$of right $\HH_0$-modules (respectively, $\KK_0$-modules), the transposed maps of which,$$\bar{\alpha}:i_{0!}\HH(0,1)\to\KK(0,1)\quad\textrm{and}\quad\bar{\beta}:r_{0!}\KK(0,1)\to\HH(0,1)$$make $\HH(0,1)$ into a retract of $r_{0!}\KK(0,1)$ as right $\HH_0$-modules, as in
\begin{diagram}[small,UO,silent]\HH(0,1)&\rTo&r_{0!}i_{0!}\HH(0,1)\\{\bar{\beta}}\!\!\!\!&\luTo&\dTo_{r_{0!}\bar{\alpha}}\\&&r_{0!}\KK(0,1).\end{diagram}
The other cases in the statement of the theorem are treated similarly.

It thus remains to be shown that in a pushout square\begin{gather}\label{poutij}\begin{diagram}[small]\JJ_{i,j}[X]&\rTo&\HH\\\dTo^{\JJ_{i,j}[u]}&&\dTo\\\JJ_{i,j}[Y]&\rTo&\KK\end{diagram}\end{gather}in $\VCat_{\{0,1\}}$, if the properties stated in the theorem hold for $\HH$, then they hold for $\KK$. Moreover, to be able to analyse the transfinite composition, we need to show that for each generating cofibration $u:X\to Y$ the map $\HH\to\KK$ in such a pushout square induces cofibrations between the different underlying structures mentioned in the theorem. For instance, $\HH_1\to\KK_1$ has to be a cofibration of monoids and $\KK_1\otimes_{\HH_1}\HH(0,1)\to\KK(0,1)$ a cofibration of left $\KK_1$-modules. We will treat separately the two cases $i=0,j=1$ and $i=0=j$. The other two cases $i=1,j=0$ and $i=1=j$ can be dealt with symmetrically (or follow by replacing $\HH$ with its opposite category). We warn the reader that the proof is quite involved in each case, since we are going to construct the relevant pushouts explicitly.

\begin{stit}\emph{Explicit construction of the pushout (\ref{poutij}) in case $i=0,j=1$}.\vspace{1ex}

  We first give a formal definition of $(\KK_0,\KK_1,\KK(0,1),\KK(1,0))$, then add a more informal ``set-theoretical'' explanation of this definition, and a finally verify that the resulting $\Vv$-category $\KK$ has the properties stated in the theorem.\vspace{1ex}

\underline{Construction of $\KK_0$ and $\KK_1$}. Consider the cofibration of $\HH_0$-bimodules$$\HH(1,0)\otimes u\otimes\HH_0:\HH(1,0)\otimes X\otimes\HH_0\to\HH(1,0)\otimes Y\otimes\HH_0,$$and apply the free monoid functor $T_{\HH_0}$ of Section \ref{examples}(d) to it, to obtain a cofibration of monoids under $\HH_0$,\begin{gather}T_{\HH_0}(\HH(1,0)\otimes X\otimes\HH_0)\to T_{\HH_0}(\HH(1,0)\otimes Y\otimes\HH_0).\end{gather}
The given map $X\to\HH(0,1)$ together with composition in $\HH$ induces a canonical map $T_{\HH_0}(\HH(1,0)\otimes X\otimes\HH_0)\to\HH_0$, and $\KK_0$ is the pushout of monoids,\begin{gather}\label{pout01a}\begin{diagram}[small]T_{\HH_0}(\HH(1,0)\otimes X\otimes\HH_0)&\rTo&\HH_0\\\dTo&&\dTo\\T_{\HH_0}(\HH(1,0)\otimes Y\otimes\HH_0)&\rTo&\KK_0.\end{diagram}\end{gather}
The construction of $\KK_1$ is symmetric, as a pushout of monoids,\begin{gather}\label{pout01b}\begin{diagram}[small]T_{\HH_1}(\HH_1\otimes X\otimes\HH(1,0))&\rTo&\HH_1\\\dTo&&\dTo\\T_{\HH_1}(\HH_1\otimes Y\otimes\HH(1,0))&\rTo&\KK_1.\end{diagram}\end{gather}Both maps of monoids $\HH_0\to\KK_0$ and $\HH_1\to\KK_1$ are thus cofibrations of monoids.\vspace{1ex}

\underline{Construction of $\KK(0,1)$ and $\KK(1,0)$}. For a generating cofibration $X\to Y$ the pushout-product axiom yields cofibrations $(\HH_1\otimes X)\cup(\partial\HH_1\otimes Y)\to \HH_1\otimes Y$ and $(X\otimes\HH_0)\cup(Y\otimes\partial\HH_0)\to Y\otimes\HH_0$. Tensoring the first (resp. second) with $\KK_0$ (resp. $\KK_1$) from the right (resp. left) gives rise to the following two pushouts, of right $\KK_0$-modules, resp. left $\KK_1$-modules (these are calculated in $\Vv$)

\begin{gather}\label{pout01c}\begin{diagram}[small](\HH_1\otimes X\otimes\KK_0)\cup(\partial\HH_1\otimes Y\otimes\KK_0)&\rTo&\HH(0,1)\otimes_{\HH_0}\KK_0\\\dTo&&\dTo\\\HH_1\otimes Y\otimes\KK_0&\rTo& P\end{diagram}\end{gather}

\begin{gather}\label{pout01d}\begin{diagram}[small](\KK_1\otimes X\otimes\HH_0)\cup(\KK_1\otimes Y\otimes\partial\HH_0)&\rTo&\KK_1\otimes_{\HH_1}\HH(0,1)\\\dTo&&\dTo\\\KK_1\otimes Y\otimes\HH_0&\rTo& Q\end{diagram}\end{gather}
in which the upper vertical arrows are induced from the given map $X\to\HH(0,1)$ and the definitions of $\KK_0$ and $\KK_1$ respectively. We claim that $P$ and $Q$ are in fact isomorphic, and in particular carry a $\KK_1$-$\KK_0$-bimodule structure; moreover, this object $P\cong Q$ defines $\KK(0,1)$. To see this isomorphism, note that there are canonical maps $P\to Q$ and $Q\to P$ definable on the upper right and lower left corners of the pushouts. It can be checked that the two maps are mutually inverse.\vspace{1ex}

Finally, the two tensor products\begin{gather}\label{tensor01}\KK_0\otimes_{\HH_0}\HH(1,0)\quad\textrm{and}\quad\HH(1,0)\otimes_{\HH_1}\KK_1\end{gather}are isomorphic, and define $\KK(1,0)$.\vspace{1ex}

\underline{Category structure}. Clearly $\KK_0$ and $\KK_1$ are monoids and $\KK(0,1),\KK(1,0)$ are bimodules. This takes care of most of the category structure of $\KK$, except the compositions$$\KK(1,0)\otimes_{\KK_1}\KK(0,1)\to\KK_0\quad\textrm{and}\quad\KK(0,1)\otimes_{\KK_0}\KK(1,0)\to\KK_1.$$The first of these is most easily described as the ``obvious'' map$$(\HH(1,0)\otimes_{\HH_1}\KK_1)\otimes_{\KK_1}Q\to\KK_0$$and the second as $$P\otimes_{\KK_0}(\KK_0\otimes_{\HH_0}\HH(1,0))\to\KK_1.$$One can now check that $\KK$ is a well-defined $\Vv$-category, having the universal property of the pushout (\ref{poutij}) for $i=0$ and $j=1$.\vspace{1ex}

\underline{Informal description}. Set-theoretically, an element of $\KK_0$ is represented by a string (with $n\geq 0$)\begin{diagram}[small]0&\lTo^{h_1}&1&\lTo^{y_1}&0&\lTo^{h_2}&1&\lTo^{y_2}&0&\cdots\cdots&0&\lTo^{h_n}&1&\lTo^{y_n}&0&\lTo^f&0\end{diagram}where $f\in\HH_0,\,h_i\in\HH(1,0)$ and $y_i\in Y$. If one of the $y_i$ lies in the `smaller'' object $X$, this string is identified with the shorter one obtained by composing $h_i$ and $h_{i+1}$ with the image of $y_i$ in $\HH(0,1)$. Observe that this kind of identification corresponds precisely to analysing a pushout of \emph{monoids} like (\ref{pout01a}). The set-theoretical description of (\ref{pout01b}) is similar and uses strings of the form\begin{diagram}[small]1&\lTo^g&1&\lTo^{y_1}&0&\lTo^{h_1}&1&\lTo^{y_2}&0&\lTo^{h_2}&1&\cdots\cdots&1&\lTo^{y_n}&0&\lTo^{h_n}&1\end{diagram}
An arrow in $\KK(0,1)$ is either of the form\begin{diagram}[small]1&\lTo^{\xi}&1&\lTo^y&0&\lTo^h&0&\quad(\xi\in\KK_1,\,y\in Y,\,h\in\HH_0)\end{diagram}or of the form\begin{diagram}[small]1&\lTo^\xi&1&\lTo^h&0&\quad(\xi\in\KK_1,\,h\in\HH(0,1))\end{diagram}If in the first presentation $h$ is decomposable or $y\in X$ then it can be written as in the second form. This is the meaning of pushout (\ref{pout01c}). Note that if in the second form $\xi$ is decomposable then  $1\overset{\xi}{\lto}1\overset{h}{\lto}0$ can be written either as\begin{diagram}[small]1&\lTo^{\xi'}&1&\lTo^y&0&\lTo^{h'}&1&\lTo^{h}&0\end{diagram}or as \begin{diagram}[small]1&\lTo^{\xi'}&1&\lTo^{h''}&0&\lTo^{h'}&1&\lTo^{h}&0\end{diagram}The first one belongs to the image of $\KK_1\otimes Y\otimes\partial\HH_0$, the second one is equated with $1\overset{\xi'}{\lto}1\overset{h''h'h}{\lto}0$ because the tensor is over $\HH_1$.

Let us try to see set-theoretically why the pushouts $P$ and $Q$ (in the definition of $\KK(0,1)$) are isomorphic. We will describe the map $Q\to P$. The description of its inverse is symmetric. The map $Q\to P$ is defined on both corners of the pushout (\ref{pout01d}) as follows. An element of $\KK_1\otimes Y\otimes\HH_0$ looks like\begin{diagram}[small](1&\lTo^g&1&\lTo^{y_1}&0&\lTo^{h_1}&1&\cdots&1&\lTo^{y_n}&0&\lTo^{h_n}&1)\otimes(1&\lTo^y&0)\otimes(0&\lTo^h&0)\end{diagram}For $n>0$, it could equally well be ``parsed'' as
\begin{diagram}[small](1&\lTo^g&1)\otimes(1&\lTo^{y_1}&0)\otimes(0&\lTo^{h_1}&1&\lTo^{y_2}&0&\cdots&0&\lTo^{h_n}&1&\lTo^{y}&0&\lTo^h&0)\end{diagram}which is a typical element of $\HH_1\otimes Y\otimes\KK_0$ (of course one has to check that this is well-defined and corresponds to the diagrammatic definition).

If $n=0$, we have an element of $\HH_1\otimes Y\otimes\HH_0$,\begin{diagram}[small](1&\lTo^g&1)\otimes(1&\lTo^z&0)\otimes(0&\lTo^h&0)\end{diagram}which can be viewed as an element of $\HH_1\otimes Y\otimes\KK_0$. This describes the map $\KK_1\otimes Y\otimes\HH_0\to P$. The map $\KK_1\otimes_{\HH_1}\HH(0,1)\to P$ can be described as follows: a typical element of $\KK_1\otimes_{\HH_1}\HH(0,1)$ looks like\begin{diagram}[small](1&\lTo^g&1&\lTo^{y_1}&0&\lTo^{h_1}&1&\cdots&1&\lTo^{y_n}&0&\lTo^{h_n}&1)\otimes(1&\lTo^f&0)\end{diagram}and could be rewritten as\begin{diagram}[small](1&\lTo^g&1)\otimes(1&\lTo^{y_1}&0)\otimes(0&\lTo^{h_1}&1&\lTo^{y_2}&0&\cdots&0&\lTo^{h_n}&1&\lTo^{f}&0)\end{diagram}in $\HH_1\otimes Y\otimes\KK_0$ if $n>0$, and as $1\overset{gf}{\lto}0$ in $\HH(0,1)$, hence in $\HH(0,1)\otimes_{\HH_0}\KK_0$, if $n=0$. Together, these give the map $Q\to P$.

Finally, an element of $\KK(1,0)=\KK_0\otimes_{\HH_0}\HH(1,0)$ looks like $0\overset{\xi}{\lto}0\overset{h}{\lto}1$ where $\xi\in\KK_0$ and $h\in\HH(1,0)$, or more explicitly\begin{diagram}[small](0&\lTo^{h_1}&1&\lTo^{y_1}&0&\cdots&0&\lTo^{h_n}&1&\lTo^{y_n}&0&\lTo^y&0)\otimes(0&\lTo^h&1)\end{diagram}where in fact we can always assume $y=1$ because the tensor is over $\HH_0$, so really\begin{diagram}[small](0&\lTo^{h_1}&1&\lTo^{y_1}&0&\cdots&0&\lTo^{h_n}&1&\lTo^{y_n}&0)\otimes(0&\lTo^h&1)\end{diagram}which for $n>0$ can be rewritten as\begin{diagram}[small](0&\lTo^{h_1}&1)\otimes(1&\lTo^{y_1}&0&\cdots&0&\lTo^{h_n}&1&\lTo^{y_n}&0&\lTo^h&1)\end{diagram}a typical element of $\HH(1,0)\otimes_{\HH_1}\KK_1$. For $n=0$, we get just an element of $\HH(0,1)$ on both sides of (\ref{tensor01}).

In terms of these set-theoretical string diagrams, the category structure of $\KK$ is given by concatenation of strings.\vspace{1ex}

\underline{Verification of the properties stated in the theorem}.\vspace{1ex}

The monoids $\KK_0$ and $\KK_1$ are cofibrant by construction, cf. the pushouts (\ref{pout01a}) and (\ref{pout01b}). Note that, in addition, the maps $\HH_0\to\KK_0$ and $\HH_1\to\KK_1$ are cofibrations of monoids, a property needed for analysing tranfinite compositions of such pushouts.

Next, since tensoring along a cofibration of monoids is a left Quillen functor, the two descriptions of $\KK(1,0)$ in (\ref{tensor01}) imply that $\KK(1,0)$ is cofibrant, both as a left $\KK_0$- and as a right $\KK_1$-module. Similarly, it follows from the two descriptions of $\KK(0,1)$ in (\ref{pout01c}) and (\ref{pout01d}) that $\KK(0,1)$ is cofibrant, both as a left $\KK_0$- and as a right $\KK_1$-module. Note again that the canonical map $\KK_1\otimes_{\HH_1}\HH(0,1)\to\KK(0,1)$ (resp. $\HH(0,1)\otimes_{\HH_0}\KK_0\to\KK(0,1)$) is a cofibration of left $\KK_1$- (resp. right $\KK_0$-) modules as required for analysing transfinite compositions of such pushouts.

Finally, we will check that $\partial\KK_0\to\KK_0$ is a cofibration between cofibrant objects in $\Vv$. A similar proof applies to $\partial\KK_1\to\KK_1$. The cofibrant monoid $\KK_0$ is cofibrant as an object of $\Vv$, since the forgetful functor $U:\Mon_\Vv\to\Vv$ preserves cofibrant objects, cf. Section \ref{examples}(a). Moroeover, $\partial\KK_0=\KK(1,0)\otimes_{\KK_1}\KK(0,1)$ is also cofibrant in $\Vv$, by Lemma \ref{cofibrant2}, since $\KK(1,0)$ and $\KK(0,1)$ are cofibrant as $\KK_1$-modules.

To see that $\partial\KK_0\to\KK_0$ is a cofibration, we filter $\KK_0$ as follows. Put $$T^p=(\HH(1,0)\otimes Y)^{\otimes p}\otimes \HH_0$$and let $T_-^p$ be the colimit of similar objects with at least one $Y$ replaced by an $X$. The generating cofibration $X\to Y$ induces a canonical map $T_-^p\to T^p$, which can be obtained by iterated application of the pushout-product axiom followed by a tensor with $\HH_0$, hence the map $T_-^p\to T^p$ is a cofibration of right $\HH_0$-modules. Now let $\KK_0^{(0)}=\HH_0$ and for $p>0$ define $\KK_0^{(p)}$ by the pushout\begin{diagram}[small]T^p_-&\rTo&\KK_0^{(p-1)}\\\dTo&&\dTo\\T^p&\rTo&\KK_0^{(p)}\end{diagram}The pushout in monoids (\ref{pout01a}) defining $\KK_0$ can also be constructed as a sequential colimit in $\Vv$ of these pushouts $\KK_0^{(p)}$, cf. Sections \ref{examples}(c)-(e). In a similar way, the object $\partial\KK_0$ can be constructed as a sequential colimit in $\Vv$, except that one starts with $(\partial\KK_0)^{(0)}=\partial\HH_0$ and continues with pushouts\begin{diagram}[small]T^p_-&\rTo&(\partial\KK_0)^{(p-1)}\\\dTo&&\dTo\\T^p&\rTo&(\partial\KK_0)^{(p)}\end{diagram}for $p>0$. Thus, we have a pushout square of cofibrations between cofibrant objects\begin{diagram}[small](\partial\KK_0)^{(p-1)}&\rTo&\KK_0^{(p-1)}\\\dTo&&\dTo\\(\partial\KK_0)^{(p)}&\rTo&\KK_0^{(p)}\end{diagram}for each $p>0$. Therefore, the colimit $\partial\KK_0\to\KK_0$ is a cofibration between cofibrant objects as well. Since this ladder of pushouts starts with $\partial\HH_0\to\HH_0$ and yields in the colimit $\partial\KK_0\to\KK_0$ we actually get the following more precise result. The square\begin{diagram}[small]\partial\HH_0&\rTo&\HH_0\\\dTo&&\dTo\\\partial\KK_0&\rTo&\KK_0\end{diagram}is a pushout square in $\Vv$ (indeed, in the category of $\HH_0$-bimodules). This property is needed to analyse transfinite compositions of pushouts of the form (1).\end{stit}

\begin{stit}\emph{Explicit construction of the pushout (\ref{poutij}) in case $i=0=j$}\label{section00}.\vspace{1ex}

Again we begin by giving explicit descriptions of the hom-objects $\KK_0$, $\KK_1$, $\KK(0,1)$, $\KK(1,0)$. These are easier than in the case $i=0,\,j=1$; however the verification of the properties as stated in the theorem will be more involved.\vspace{1ex}

\underline{Construction of $\KK_0$ and $\KK_1$}. The monoid $\KK_0$ is defined as a pushout in the category of monoids in $\Vv$ (where $T$ denotes the free monoid functor),\begin{gather}\label{pout00a}\begin{diagram}[small]T(X)&\rTo&\HH_0\\\dTo&&\dTo\\T(Y)&\rTo&\KK_0\end{diagram}\end{gather}\vspace{1ex}
while $\KK_1$ is defined as a pushout in the category of $\HH_1$-bimodules

\begin{gather}\label{pout00b}\begin{diagram}[small]\HH(0,1)\otimes_{\HH_0}\HH_0\otimes_{\HH_0}\HH(1,0)&\rTo&\HH_1\\\dTo&&\dTo\\\HH(0,1)\otimes_{\HH_0}\KK_0\otimes_{\HH_0}\HH(1,0)&\rTo&\KK_1\end{diagram}\end{gather}\vspace{1ex}

\underline{Construction of $\KK(0,1)$ and $\KK(1,0)$}. These are defined by

\begin{gather}\label{tensor00}\KK(0,1)=\HH(0,1)\otimes_{\HH_0}\KK_0\quad\textrm{and}\quad\KK(1,0)=\KK_0\otimes_{\HH_0}\HH(1,0)\end{gather}\vspace{1ex}

\underline{Category structure}. First notice that $\KK_1$ is indeed a monoid under $\HH_1$ by Lemma \ref{bimoduleadjoint}. Next, $\KK(0,1)$ is a right $\KK_0$-module by construction; it is a left $\KK_1$-module by ``amalgamation'' of the left $\HH_1$-action on $\HH(0,1)\otimes_{\HH_0}\KK_0$ and the left $\HH(0,1)\otimes_{\HH_0}\KK_0\otimes_{\HH_0}\HH(1,0)$-action on $\HH(0,1)\otimes_{\HH_0}\KK_0$ (given by composition in $\HH$ and multipliciation in $\KK_0$). Similarly, $\KK(1,0)$ has the structure of a left $\KK_0$- and right $\KK_1$-module. Finally, there are canonical maps$$\KK(1,0)\otimes\KK(0,1)\rto\KK_0\otimes\partial\HH_0\otimes\KK_0\rto\KK_0$$and$$\KK(0,1)\otimes\KK(1,0)\rto\HH(0,1)\otimes_{\HH_0}\KK_0\otimes_{\HH_0}\HH(1,0)\rto\KK_1$$defining the remaining compositions in $\KK$. One now checks that these maps all together define a category structure on $\KK$, and that $\KK$ thus constructed has the universal property of the pushout (\ref{poutij}) for $i=0=j$.\vspace{1ex}

\underline{Verification of the properties stated in the theorem}.\vspace{1ex}

Assuming that $\HH$ has these properties and that $X\to Y$ is a cofibration in $\Vv$, we now check that $\KK$ has these properties as well, again making sure that transfinite composition of such pushouts is possible. Some of these properties are obvious, viz.
\begin{itemize}\item[(a)]$\KK_0$ is a cofibrant monoid (indeed, $\HH_0\to\KK_0$ is a cofibration of monoids);\item[(b)]$\KK(0,1)$ (resp. $\KK(1,0)$) is a cofibrant right (resp. left) $\KK_0$-module;\item[(c)]$\partial\KK_1\to\KK_1$ is a cofibration between cofibrant objects.\end{itemize}
Indeed, $\partial\KK_1=\KK(0,1)\otimes_{\KK_0}\KK(1,0)=\HH(0,1)\otimes_{\HH_0}\KK_0\otimes_{\HH_0}\HH(1,0)$, so that pushout (\ref{pout00b}) can be rewritten as\begin{diagram}[small]\partial\HH_1&\rTo&\HH_1\\\dTo&&\dTo\\\partial\KK_1&\rTo&\KK_1\end{diagram} from which (c) immediately follows. It thus remains to be proved
\begin{itemize}\item[(d)]$\KK_1$ is a cofibrant monoid (indeed, $\HH_1\to\KK_1$ is a cofibration of monoids);\item[(e)]$\KK(0,1)$ (resp. $\KK(1,0)$) is a cofibrant left (resp. right) $\KK_1$-module;\item[(f)]$\partial\KK_0\to\KK_0$ is a cofibration between cofibrant objects.\end{itemize}\vspace{1ex}

\underline{Proof of (d)}. We use that $\partial\HH_0\to\HH_0$ is a cofibration and define the following filtration on $\KK_1$, cf. Sections \ref{examples}(c)-(e). Put for $p>0$: $$Y^{(p)}=\HH(0,1)\otimes Y\otimes\HH_0\otimes\cdots\otimes\HH_0\otimes Y\otimes\HH(1,0),$$with $p$ occurences of $Y$ and $p-1$ occurrences of $\HH_0$. Let $Y^{(p)}_-$ be the canonical colimit of objects like $Y^{(p)}$ where at least one of the $Y$'s is replaced by an $X$. By the pushout-product axiom and the fact that $\HH_0$, $\HH(0,1)$, $\HH(1,0)$ are cofibrant in $\Vv$, the map$$Y^{(p)}_-\to Y^{(p)}$$ is a cofibration in $\Vv$. This cofibration is of the form $$\HH(0,1)\otimes A\otimes\HH(1,0)\to\HH(0,1)\otimes B\otimes\HH(1,0)$$ for a cofibration $A\to B$ in $\Vv$, which implies that it is a cofibration of $\HH_1$-bimodules, because of the cofibrancy of $\HH(0,1)$ and $\HH(1,0)$ as $\HH_1$-modules. The filtration on $\KK_1$ is defined by$$\KK_1^{(0)}=\HH_1$$ and the pushouts\begin{diagram}[small]Y^{(p)}_-&\rTo&\KK_1^{(p-1)}\\\dTo&&\dTo\\Y^{(p)}&\rTo&\KK_1^{(p)}\end{diagram}along the maps $Y^{(p)}_-\rto Y^{(p-1)}\rto\KK_1^{(p-1)}$ induced by $X\to\HH_0$, for $p>0$. Note that this filtration$$\KK_1^{(0)}\rto\KK_1^{(1)}\rto\KK_1^{(2)}\rto\cdots$$is not a filtration by monoids, although it is a filtration by $\HH_1$-bimodules. The multiplication on $\KK_1$ restricts to\begin{gather}\label{pout00K1}\KK_1^{(p)}\otimes_{\HH_1}\KK_1^{(q)}\rto\KK_1^{(p+q)}.\end{gather}If $M$ is any monoid and $\HH_1\to M$ is a map of monoids (making $M$ into an $\HH_1$-bimodule) then we will call a map $\phi:\KK_1\to M$ \emph{multiplicative} if it is a map of $\HH_1$-bimodules which makes the diagram\begin{diagram}[small]\KK_1^{(p)}\otimes_{\HH_1}\KK_1^{(q)}&\rTo^{\phi\otimes\phi}&M\otimes_{\HH_1}M\\\dTo&&\dTo\\\KK_1^{(n)}&\rTo^\phi&M\end{diagram}commute, for any two $p,q>0$ with $p+q=n$. A compatible sequence $\phi^{(n)}:\KK_1^{(n)}\to M$ (compatible in the sense that $\phi^{(n)}$ precomposed with $\KK_1^{(n-1)}\to\KK_1^{(n)}$ is $\phi^{(n-1)}$) of multiplicative maps will define a map of monoids $\KK_1\to M$.

Consider again the object $Y^{(p)}$, for $p>1$, so that there is at least one occurence of $\HH_0$. Let $Y^{(p)}_\partial$ be the colimit of all similar objects where at least one occurrence of $\HH_0$ is replaced by $\partial\HH_0$. Then the cofibration $\partial\HH_0\to\HH_0$ induces a map $Y^{(p)}_\partial\to Y^{(p)}$, which is again a cofibration of $\HH_1$-bimodules by the pushout-product axiom, just like for $Y^{(p)}_-\to Y^{(p)}$ above. The inclusions of the different filtration stages of $\KK_1$ can be refined as in$$\cdots\rto\KK_1^{(p-1)}\rto\KK_{1,\partial}^{(p)}\rto\KK_1^{(p)}\rto\cdots$$where $\KK_{1,\partial}^{(p)}$ fits into pushouts\begin{diagram}[small]Y^{(p)}_-&\rTo&Y^{(p)}_-\cup Y^{(p)}_\partial&\rTo& Y^{(p)}\\\dTo&&\dTo&&\dTo\\K_1^{(p-1)}&\rTo&\KK_{1,\partial}^{(p)}&\rTo&\KK_1^{(p)}\end{diagram}in which all horizontal maps are cofibrations of $\HH_1$-bimodules. Moreover, the multiplication (\ref{pout00K1}) for $p,q>0$ with $p+q=n$ factors through $\KK_{1,\partial}^{(n)}\to\KK_1^{(n)}$. Thus, a map of $\HH_1$-bimodules $\KK_1^{(n)}\to M$ is multiplicative if and only if its restriction to $\KK_{1,\partial}^{(n)}$ is; in particular, it makes sense to say of a map $\KK_{1,\partial}^{(n)}\to M$ that it is multiplicative. In fact, any multiplicative map $\KK_1^{(n-1)}\rto M$ extends uniquely to a multiplicative map $\KK_{1,\partial}^{(n)}\rto M$ because, by definition, $\KK_{1,\partial}^{(n)}$ is the colimit of the diagram over $\KK_1^{(n)}$ given by all the maps (\ref{pout00K1}) with $p,q>0,\,p+q=n$.

With this notation, it is now easy to prove that $\HH_1\to\KK_1$ is a cofibration of monoids. Indeed, suppose we are given a commutative square\begin{diagram}[small]\HH_1&\rTo^\phi& M\\\dTo&&\dTo\\\KK_1&\rTo^\psi&N\end{diagram}of monoids where $M\rto N$ is a trivial fibration in the transferred model structure. The map $\HH_1\to M$ makes $M\to N$ into a map of $\HH_1$-bimodules, which is again a trivial fibration in the appropriate transferred model structure. We now construct a compatible sequence of multiplicative lifts$$\phi^{(n)}:\KK_1^{(n)}\to M$$making the appropriate diagrams commute, as follows. For $n=0$, we take $\phi^{(0)}$ to be $\phi$. Next we extend it to $\phi^{(1)}$ by lifting in the diagram of $\HH_1$-bimodules\begin{diagram}[small,UO,silent]\KK_1^{(0)}&\rTo&M\\\dTo&\ruDotsto&\dTo\\\KK_1^{(1)}&\rTo&N\end{diagram}which is possible since the left vertical map is a cofibration of $\HH_1$-bimodules. For $n>1$, we first extend $\phi^{(n-1)}:\KK_1^{(n-1)}\to M$, already found, uniquely to a multiplicative map $\phi_\partial^{(n)}:\KK_{1,\partial}^{(n)}\to M$. Next we use that $\KK_{1,\partial}^{(n)}\to\KK_1^{(n)}$ is a cofibration of $\HH_1$-bimodules, and extend $\phi_\partial^{(n)}$ to $\phi^{(n)}:\KK_1^{(n)}\to M$. This map of $\HH_1$-bimodules is automatically multiplicative, as noted above. The sequence $\phi^{(n)}$ thus found gives the required diagonal $\KK_1\to M$. This completes the proof that $\HH_1\to\KK_1$ is a cofibration of monoids, and hence proves (d).\vspace{1ex}

\underline{Proof of (e)}. We will show that $\KK(1,0)$ is a cofibrant right $\KK_1$-module. In fact, our proof will show that the canonical map $\HH(1,0)\otimes_{\HH_1}\KK_1\to\KK(1,0)$ is a cofibration of right $\KK_1$-modules. This stronger property is needed for analysing transfinite compositions of pushouts (\ref{poutij}). The proof that $\KK(0,1)$ is a cofibrant left $\KK_1$-module and, in fact, that $\KK_1\otimes_{\HH_1}\HH(0,1)\to\KK(0,1)$ is a cofibration of left $\KK_1$-modules, is similar.

Recall that $\KK_0$ is constructed from $\HH_0$ by the pushout of monoids (\ref{pout00a}) for a generating cofibration $X\to Y$. Thus $\KK_0$ is naturally filtered as follows. Let$$W^{(p)}=\HH_0\otimes Y\otimes\HH_0\otimes\cdots\otimes Y\otimes\HH_0\quad(p>0)$$ with $p$ occurences of $Y$, and let $W_-^{(p)}$ be the colimit of similar objects where at least one of the $Y$'s is replaced by an $X$, so that we have canonical cofibrations $W_-^{(p)}\to W^{(p)}$ as given by the pushout-product axiom. Let $\KK^{(0)}=\HH_0$, and for $p>0$, let $\KK_0^{(p)}$ be defined by the pushout\begin{gather}\label{pout00filt00}\begin{diagram}[small]W_-^{(p)}&\rTo&\KK_0^{(p-1)}\\\dTo&&\dTo\\W^{(p)}&\rTo&\KK_0^{(p)}\end{diagram}\end{gather}Then $\KK_0$ is the colimit of cofibrations$$\KK_0^{(0)}\rto\KK_0^{(1)}\rto\KK_0^{(2)}\rto\cdots$$Since by definition $\KK(1,0)=\KK_0\otimes_{\HH_0}\HH(1,0)$, the hom-object $\KK(1,0)$ has a similar filtration starting with $\KK(1,0)^{(0)}=\HH(1,0)$ and defined by successive pushouts\begin{gather}\label{pout00filt10}\begin{diagram}[small]W_-^{(p)}\otimes_{\HH_0}\HH(1,0)&\rTo&\KK(1,0)^{(p-1)}\\\dTo&&\dTo\\W^{(p)}\otimes_{\HH_0}\HH(1,0)&\rTo&\KK(1,0)^{(p)}\end{diagram}\end{gather}This is a filtration by right $\HH_1$-modules (not by $\KK_1$-modules). Note that the filtration of $\KK_1$ used in the proof of (d) has been constructed in an analogous way, starting with $\KK_1^{(0)}=\HH_1$, and using pushouts with left vertical maps of the form $\HH(0,1)\otimes_{\HH_0} W_-^{(p)}\otimes_{\HH_0}\HH(1,0)\rto\HH(0,1)\otimes_{\HH_0} W^{(p)}\otimes_{\HH_0}\HH(1,0)$, denoted $Y^{(p)}_-\rto Y^{(p)}$. In particular, this yields canonical filtered action maps$$\KK(1,0)^{(p)}\otimes_{\HH_1}\KK_1^{(q)}\to\KK(1,0)^{(p+q)}$$which in the colimit define the right $\KK_1$-action on $\KK(1,0)$.

To prove that $\KK(1,0)$ is a cofibrant right $\KK_1$-module, consider a trivial fibration of right $\KK_1$-modules $M\to N$, and a map $\KK(1,0)\to N$. We will construct a lift $\KK(1,0)\to M$ by successively lifting $\KK(1,0)^{(p)}\to\KK(1,0)\to N$ to $\KK(1,0)^{(p)}\to M$ as right $\HH_1$-module maps. To make sure that the resulting lift is a map of right $\KK_1$-modules, we need the $\phi^{(n)}$ to be multiplicative, in the sense that each square\begin{diagram}[small]\KK(1,0)^{(p)}\otimes_{\HH_1}\KK_1^{(q)}&\rTo^{\phi^{(p)}\otimes id}&M\otimes_{\HH_1}\KK_1^{(q)}\\\dTo&&\dTo\\\KK(1,0)^{(p+q)}&\rTo^{\phi^{(p+q)}}&M\end{diagram} commutes, for $p+q\leq n$. For $n=0$, we find $\phi^{(0)}:\KK(0,1)^{(0)}=\HH(1,0)\to M$ because $\HH(1,0)$ is cofibrant as a right $\HH_1$-module. In order to extend $\phi^{(n-1)}$ to $\phi^{(n)}$ it suffices to show that\begin{gather}\label{pout00cof}\bigcup_{p+q=n,p<n}\KK(1,0)^{(p)}\otimes_{\HH_1}\KK_1^{(q)}\rto\KK(1,0)^{(n)}\end{gather} is a cofibration of right $\HH_1$-modules. For $n>0$ fixed, we write $A$ for the domain of (\ref{pout00cof}) and represent $A$ as a union $\KK(1,0)^{(n-1)}\cup A'$ where$$A'=\bigcup_{0\leq k<n}(\HH_0\otimes Y)^{\otimes k}\otimes\partial\HH_0\otimes (Y\otimes\HH_0)^{\otimes n-k-1}\otimes Y\otimes\HH(1,0).$$By definition, $A'=U\otimes\HH(1,0)$ and $\KK(1,0)^{(n)}=\KK(1,0)^{(n-1)}\cup (V\otimes\HH(1,0))$ for\begin{align*}U=&\bigcup_{0\leq k<n}(\HH_0\otimes Y)^{\otimes k}\otimes\partial\HH_0\otimes (Y\otimes\HH_0)^{\otimes n-k-1}\otimes Y\quad\textrm{and}\\V=&\,(\HH_0\otimes Y)^{\otimes n},\end{align*}both with $n$ factors $Y$. Now let $U^-\to U$ be the cofibration given by the pushout-product axiom where $U^-$ is the union of objects like $U$ but with at least one of the $Y$'s replaced by an $X$, and similarly for $V^-\to V$. Then $U\cup_{U^-}V^-\rto V$ is also a cofibration by the pushout-product axiom. The map $A\to\KK(1,0)^{(n)}$ considered in (\ref{pout00cof}) is a map between pushouts as described by the following commutative cube

\begin{diagram}[small,w=0.8cm,silent,UO]&&V^-\otimes\HH(1,0)&\rTo&&&\KK(1,0)^{(n-1)}\\&\ruTo&\vLine&&&\ruTo&\dTo\\U^-\otimes\HH(1,0)&&&\rTo&\KK(1,0)^{(n-1)}&&\\\dTo&&\dTo&&\dTo&&\\&&V\otimes\HH(1,0)&\hLine&\VonH&\rTo&\KK(1,0)^{(n)}\\&\ruTo&&&&\ruTo&\\U\otimes\HH(1,0)&\rTo&&&A&&\end{diagram}in which front and back square are pushouts. It follows then from an easy diagram chase (cf. \cite[Lemma 6.9]{BM2}) that the induced square\begin{diagram}[small](U\cup_{U^-}V^-)\otimes\HH(1,0)&\rto&A\cup_{\KK(1,0)^{(n-1)}}\KK(1,0)^{(n-1)}\\\dTo&&\dTo\\V\otimes\HH(1,0)&\rTo&\KK(1,0)^{(n)}\end{diagram}is a pushout square. Therefore, since the left vertical map is a cofibration of right $\HH_1$-modules, the right vertical map (i.e. the map $A\to\KK(1,0)^{(n)}$) is as well, which is precisely what had to be shown.

Note that the same argument in fact shows that $\HH(1,0)\otimes_{\HH_1}\KK_1\to\KK(1,0)$ is a cofibration of right $\KK_1$-modules. Indeed, given a commutative square of right $\KK_1$-modules\begin{diagram}[small]\HH(1,0)\otimes_{\HH_1}\KK_1&\rTo^\chi&M\\\dTo&&\dTo\\\KK(1,0)&\rTo&N\end{diagram}write $\phi^{(0)}$ for the map $\KK(1,0)^{(0)}=\HH(1,0)\to M$ of $\HH_1$-modules corresponding to $\chi$ by adjunction, and proceed as above.\vspace{1ex}

\underline{Proof of (f)}. We used in the proof of (e) that $\KK_0$ carries a natural filtration $\cdots\to\KK_0^{(p-1)}\to\KK_0^{(p)}\to\cdots$ starting with $\KK_0^{(0)}=\HH_0$. We will first prove that $\partial\KK_0$ is similarly filtered by objects $(\partial\KK_0)^{(p)}$ which fit into a ladder\begin{diagram}[small]\partial\HH_0&=&(\partial\KK_0)^{(0)}&\rTo&(\partial\KK_0)^{(1)}&\rTo&(\partial\KK_0)^{(2)}&\rTo&\cdots\\\dTo&&\dTo&&\dTo&&\dTo&&\\\HH_0&=&\KK_0^{(0)}&\rTo&\KK_0^{(1)}&\rTo&\KK_0^{(2)}&\rTo&\cdots\end{diagram}in which all the maps are cofibrations, as are the comparison maps from the inscribed pushouts$$(\partial\KK_0)^{(p)}\cup_{(\partial\KK_0)^{(p-1)}}\KK_0^{(p-1)}\rto\KK_0^{(p)}.$$This will imply that the map $\partial\KK_0\to\KK_0$ in the colimit is a cofibration, and in fact that the canonical map $\HH_0\cup_{\partial\HH_0}\partial\KK_0\to\KK_0$ is a cofibration.

Similarly to the cofibrations $W^{(p)}_-\to W^{(p)}$ used to construct the filtration of $\KK_0$, cf. diagram (\ref{pout00filt00}), one can construct cofibrations using the pushout-product axiom,$$V^{(p)}\to W^{(p)}$$where$$V^{(p)}=\bigcup_{0\leq k\leq p}(\HH_0\otimes Y)^{\otimes k}\otimes\partial\HH_0\otimes(Y\otimes\HH_0)^{\otimes p-k}$$is the colimit over all objects like $W^{(p)}$ but with at least one occurence of $\HH_0$ replaced by $\partial\HH_0$. Let $V^{(p)}_-$ be the colimit of similar objects, where in addition at least one of the $Y$'s is replaced by an $X$. So the maps $X\to Y$ and $\partial\HH_0\to\HH_0$ induce a commutative square\begin{gather}\label{pout00partialK}\begin{diagram}[small]V^{(p)}_-&\rTo&V^{(p)}\\\dTo&&\dTo\\W^{(p)}_-&\rTo&W^{(p)}\end{diagram}\end{gather}in which (again by the pushout-product axiom) all maps are cofibrations, as is the comparison map\begin{gather}\label{pout00comparison}W^{(p)}_-\cup_{V^{(p)}_-}V^{(p)}\rto W^{(p)}\end{gather}from the inscribed pushout in (\ref{pout00partialK}) to the lower right corner. Now construct a sequence of cofibrations$$D^{(0)}\rto D^{(1)}\rto D^{(2)}\rto\cdots$$by setting $D^{(0)}=\partial\HH_0$, and constructing $D^{(p)}$ from $D^{(p-1)}$ as a pushout\begin{gather}\label{pout00D}\begin{diagram}[small]V^{(p)}_-&\rTo&D^{(p-1)}\\\dTo&&\dTo\\V^{(p)}&\rTo&D^{(p)}\end{diagram}\end{gather}for $p>0$. Let $D$ be the colimit $D=\varinjlim_pD^{(p)}$. Thus $D$ is filtered by the $D^{(p)}$ and one can now construct maps $D^{(p)}\to\KK_0^{(p)}$ starting with $\partial\HH_0\to\HH_0$ for $p=0$, and continuing from $p-1$ to $p$ by completing the cube below in which the left and right squares are the pushouts (\ref{pout00D}) and (\ref{pout00filt00}).

\begin{diagram}[small,silent,UO]&&D^{(p-1)}&\rTo&&&\KK_0^{(p-1)}\\&\ruTo&\vLine&&&\ruTo&\dTo\\V^{(p)}_-&&&\rTo&W^{(p)}_-&&\\\dTo&&\dTo&&\dTo&&\\&&D^{(p)}&\hDots&\VonH&\rDotsto&\KK_0^{(p)}\\&\ruTo&&&&\ruTo&\\V^{(p)}&\rTo&&&W^{(p)}&&\end{diagram}
Since the comparison map (\ref{pout00comparison}) of the front square is a cofibration, the comparison map $D^{(p)}\cup_{D^{(p-1)}}\KK_0^{(p-1)}\to\KK_0^{(p)}$ of the back square is a cofibration as well, cf.  \cite[Lemma 6.9]{BM2}. It thus suffices to show that $D\to\KK_0$ is isomorphic to $\partial\KK_0\to\KK_0$. By definition, $\partial\KK_0$ is the coequaliser of the following diagram\vspace{1ex}

\begin{gather}\label{pout00coeq}\begin{diagram}[small](\KK_0\otimes_{\HH_0}\HH(1,0))\otimes_{\HH_1}\KK_1\otimes_{\HH_1}(\HH(0,1)\otimes_{\HH_0}\KK_0)\\\dTo^\alpha\dTo_\beta\\(\KK_0\otimes_{\HH_0}\HH(1,0))\otimes_{\HH_1}(\HH(0,1)\otimes_{\HH_0}\KK_0)\\\dTo_\pi\\\partial\KK_0\end{diagram}\end{gather}

Now first of all, each constituent $\HH_0\otimes\cdots\otimes Y\otimes\partial\HH_0\otimes Y\otimes\cdots\otimes\HH_0$ of $V^{(p)}$ maps naturally to $\partial\KK_0$ since it can be rewritten as$$(\HH_0\otimes\cdots\otimes Y\otimes\HH(1,0))\otimes_{\HH_1}(\HH(0,1)\otimes Y\otimes\cdots\otimes\HH_0)$$which maps canonically to the middle object of the coequaliser (\ref{pout00coeq}). When composed with $\pi$ these together give a well-defined map $D^{(p)}\to\partial\KK_0$ for each $p$, and in the colimit we obtain a map $D\to\partial\KK^0$.

In the other direction, the filtrations of $\KK_0$, $\KK_1$ and $\KK_0$ by $\KK^{(p)}_0$, $\KK_1^{(r)}$ and $\KK_0^{(q)}$ respectively, induce a filtration by three degrees $(p,r,q)$ on the top object of (\ref{pout00coeq}), by two degrees $(p,q)$ on the middle object, and by one degree $n$ on the coequaliser $\partial\KK_0$. The maps $\alpha,\,\beta,\,\pi$ take the $(p,r,q)$-part to the the $(p+r,q)$-part respectively the $(p,r+q)$-part, and the $(p,q)$-part to the $(p+q)$-part. Now the filtration part $$(\KK_0^{(p)}\otimes_{\HH_0}\HH(1,0))\otimes_{\HH_1}(\HH(0,1)\otimes_{\HH_0}\KK_0^{(q)})$$ maps to $D^{(p+q)}$ in the obvious way, and this map factors through $\pi$ to give a natural map $(\partial\KK_0)^{(p+q)}\to D^{(p+q)}$. Together, these define a map $\partial\KK_0\to D$. It is now a straightforward diagram chase to check that the two maps thus constructed, $D\to\partial\KK_0$ and $\partial\KK_0\to D$, are mutually inverse.\qed\end{stit}

For the remaining proof of Lemma \ref{IAL} we need the following complement to Lemma \ref{cofibrant2}, where an object of a monoidal model category is called \emph{weakly contractible} if there is a zig-zag of weak equivalences relating it to the monoidal unit.

\begin{lma}\label{contractible}Let $R$ be a weakly contractible, well pointed monoid in $\Vv$, and let $M$ (resp. $N$) be a weakly contractible, cofibrant right (resp. left) $R$-module. Then the tensor product $M\otimes_RN$ is a weakly contractible, cofibrant object of $\Vv$.\end{lma}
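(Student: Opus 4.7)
Cofibrancy of $M \otimes_R N$ in $\Vv$ is handed to us by Lemma \ref{cofibrant2}, so only weak contractibility must be established. The plan is to compare the cofibrant left $R$-module $N$ with the canonical weakly contractible cofibrant left $R$-module, namely $R$ itself (regarded as the free $R$-module on $I_\Vv$), and then push the comparison through the left Quillen functor $M\otimes_R -:{}_R\Mod\to\Vv$ of Lemma \ref{cofibrant2}. Since $M\otimes_R R\cong M$ is weakly contractible by hypothesis, any zigzag of weak equivalences between $R$ and $N$ along cofibrant left $R$-modules will transport under $M\otimes_R-$ to a zigzag witnessing weak contractibility of $M\otimes_R N$.

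First I would note that $R$ is cofibrant in ${}_R\Mod$, being the free $R$-module on the cofibrant object $I_\Vv$ (the free functor $\Vv\to{}_R\Mod$ is left Quillen by Section \ref{examples}(a)). I would then choose a fibrant replacement $N\overset{\sim}{\to} N_f$ in the transferred model structure on ${}_R\Mod$; the derived free--forgetful adjunction yields
$$[R,N_f]_{\Ho({}_R\Mod)}\cong [I_\Vv,U(N_f)]_{\Ho(\Vv)},$$
and the right-hand side is non-empty because $I_\Vv$ is cofibrant in $\Vv$ and $U(N_f)$ is weakly contractible (being weakly equivalent to $N$). Any representative gives a morphism $\rho:R\to N_f$ in ${}_R\Mod$ which, having weakly contractible source and target in $\Vv$, is automatically a weak equivalence, since weak equivalences in ${}_R\Mod$ are detected in $\Vv$. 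Factoring $\rho$ as a trivial cofibration $R\overset{\sim}{\ito} W$ followed by a trivial fibration $W\overset{\sim}{\onto} N_f$ in ${}_R\Mod$, the object $W$ is cofibrant and the lifting axiom produces a map $N\to W$ over $N\to N_f$, which is a weak equivalence by 2-out-of-3. Applying $M\otimes_R-$ to the zigzag $R\overset{\sim}{\to} W\overset{\sim}{\leftarrow} N$ and invoking Ken Brown's lemma yields $M\cong M\otimes_R R\overset{\sim}{\to} M\otimes_R W\overset{\sim}{\leftarrow} M\otimes_R N$ in $\Vv$, so $M\otimes_R N$ inherits weak contractibility from $M$.

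The step I expect to be the main obstacle is the production of the comparison morphism $\rho:R\to N_f$ and the verification that it is a weak equivalence. This is where the two distinct weak contractibility hypotheses (on $R$ as a monoid and on $N$ as a module) must be bridged through the derived free--forgetful adjunction, and where well-pointedness of $R$ plays an essential role -- both in providing the transferred model structure on ${}_R\Mod$ and in ensuring that $M\otimes_R-$ is left Quillen, so that Ken Brown's lemma applies to the final zigzag.
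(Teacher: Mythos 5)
Your overall strategy is the paper's: cofibrancy comes from Lemma \ref{cofibrant2}, and weak contractibility is obtained by relating $N$ to $R$ through cofibrant left $R$-modules and pushing the comparison through the left Quillen functor $M\otimes_R-$ via Brown's Lemma. You correctly identify the delicate point --- producing $\rho:R\to N_f$ and showing it is a weak equivalence --- but your resolution of it contains a genuine error: a map between weakly contractible objects of $\Vv$ is \emph{not} automatically a weak equivalence. Weak contractibility only says each object is isomorphic to $I_\Vv$ in $\Ho(\Vv)$; it does not make $\Hom_{\Ho(\Vv)}(I_\Vv,I_\Vv)$ a singleton, so an arbitrary homotopy class $I_\Vv\to U(N_f)$ need not be invertible. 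Concretely, in chain complexes over $\ZZ$ (one of the paper's principal examples) take $R=I_\Vv=\ZZ$: multiplication by $2$ is a map of weakly contractible, cofibrant $R$-modules $\ZZ\to\ZZ$ which is not a quasi-isomorphism. So ``any representative'' of $[I_\Vv,U(N_f)]$ will not do.

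The repair is to choose $\phi:I_\Vv\to U(N_f)$ representing the \emph{isomorphism} $I_\Vv\cong U(N_f)$ in $\Ho(\Vv)$ furnished by the contractibility zig-zag; since $I_\Vv$ is cofibrant and $U(N_f)$ is fibrant, this class is realised by an actual map, which is a weak equivalence by the saturation property. The triangle identity for the free--forgetful adjunction gives $U(\rho)\circ e=\phi$, where $e:I_\Vv\to R$ is the unit of the monoid; since $e$ is a weak equivalence --- this is precisely where the weak contractibility of $R$ enters, as in the last line of the paper's own proof --- two-out-of-three makes $U(\rho)$, hence $\rho$, a weak equivalence. With that corrected, the remainder of your argument (the factorisation through $W$, the lift $N\to W$, and the application of $M\otimes_R-$ to the resulting zig-zag) goes through, and in fact supplies the detail that the paper compresses into the single assertion that $N$ ``can be related to $R$ by a zig-zag of weak equivalences passing through cofibrant left $R$-modules only.''
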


\begin{proof}In virtue of Lemma \ref{cofibrant2} it remains to be shown that $M\otimes_RN$ is weakly contractible in $\Vv$. Observe that in any Quillen model category a zig-zag of weak equivalences between two cofibrant objects can be replaced by a zig-zag of weak equivalences between the same objects, which passes through cofibrant objects only. Therefore, the weakly contractible, cofibrant left $R$-module $N$ can be related to the monoidal unit $R$ by a zig-zag of weak equivalences passing through cofibrant left $R$-modules only. After application of the left Quillen functor $M\otimes_R-$, we thus get a zig-zag of weak equivalences in $\Vv$ relating $M\otimes_RN$ and $M$. Now $M$ itself is a weakly contractible, right $R$-module, hence there is a zig of weak equivalences between $M$ and $R$. Finally, the unit $I_\Vv\to R$ is a weak equivalence by assumption so that we get a zig-zag of weak equivalences between $M\otimes_RN$ and $I_\Vv$ as required.\end{proof}

\subsection{Proof of the Interval Amalgamation Lemma \ref{IAL}}\label{proofIAL}--\vspace{1ex}

Let $\partial_i:\{0,1\}\to\{0,1,2\}$ denote the order-preserving inclusion which omits $i$. The amalgamation $\HH*\KK$ of the $\Vv$-intervals $\HH$ and $\KK$ can then be defined by $$\HH*\KK=\partial_1^*(\partial_{2!}\KK\sqcup\partial_{0!}\HH)$$where the coproduct is taken in $\VCat_{\{0,1,2\}}$. We write $\LL=\partial_{2!}\KK\sqcup\partial_{0!}\HH$, hence $\HH*\KK=\partial_1^*\LL$ in $\VCat_{\{0,1\}}$. It remains to be shown that $\HH*\KK$ is weakly equivalent to $\Iso$. Since $\HH$ and $\KK$ are $\Vv$-intervals, there are weak equivalences $\HH\eqv\wIso$ and $\KK\eqv\wIso$ inducing a $\Vv$-functor $\HH*\KK\to\wIso*\wIso$. Note that $\wIso$ can be chosen to be a $\Vv$-interval itself; moreover, it is readily verified that $\Iso*\Iso\cong\Iso$. It is therefore sufficient to show that for \emph{any} $\Vv$-intervals $\HH$ and $\KK$, the amalgamation $\HH*\KK$ has weakly contractible hom-objects. This will follow from the Interval Cofibrancy Theorem \ref{mainthm} together with the following explicit description of the hom-objects $\LL(i,j)$, where as usual $\LL(i,i)$ is abbreviated to $\LL_i$:\vspace{1ex}

\begin{itemize}\item[-] $\LL_1=\HH_1*\KK_0\quad\text{(the coproduct of monoids)};$
\item[-] $\LL(0,1)=\LL_1\otimes_{\HH_1}\HH(0,1);$
\item[-] $\LL(1,0)=\HH(1,0)\otimes_{\HH_1}\LL_1;$
\item[-] $\LL(1,2)=\KK(0,1)\otimes_{\KK_0}\LL_1;$
\item[-] $\LL(2,1)=\LL_1\otimes_{\KK_0}\KK(1,0);$
\item[-] $\LL(0,2)=\LL(1,2)\otimes_{\LL_1}\LL(0,1);$
\item[-] $\LL(2,0)=\LL(1,0)\otimes_{\LL_1}\LL(2,1).$
\end{itemize}
The endomorphism-monoid $\LL_1$ is cofibrant as coproduct of two cofibrant monoids. Moreover, since $\HH_1$ and $\KK_0$ are weakly contractible monoids, their coproduct $\LL_1$ is a weakly contractible monoid as well. By construction, $\LL(0,1)$ is obtained by applying the left Quillen functor $\LL_1\otimes_{\HH_1}-$ to the cofibrant left $\HH_1$-module $\HH(0,1)$, hence $\LL(0,1)$ is a cofibrant left $\LL_1$-module. By hypothesis, $\HH(0,1)$ is weakly contractible in $\Vv$ and $\HH_1$ is a weakly contractible monoid. In particular, $\HH(0,1)$ is weakly contractible left $\HH_1$-module so that $\LL(0,1)$ is a weakly contractible left $\LL_1$-module.

Similarily, $\LL(2,1)$ is a weakly contractible, cofibrant left $\LL_1$-module, and $\LL(1,0)$ and $\LL(1,2)$ are weakly contractible, cofibrant right $\LL_1$-modules. Lemma \ref{contractible} thus implies that $\LL(0,2)$ and $\LL(2,0)$ are weakly contractible, cofibrant objects of $\Vv$.

The endomorphism-monoids of $\LL$ at $0$ and $2$ are given by the following pushouts, of $\HH_0$-bimodules and $\KK_1$-bimodules respectively (cf. the proof of Lemma \ref{bimoduleadjoint}):
\begin{diagram}[small]\HH(1,0)\otimes_{\HH_1}\HH(0,1)&\rTo&\HH_0&\quad\quad&\KK(0,1)\otimes_{\KK_1}\KK(1,0)&\rTo&\KK_1\\\dTo&&\dTo&&\dTo&&\dTo\\
\LL(1,0)\otimes_{\LL_1}\LL(0,1)&\rTo&\NWpbk\LL_0&\quad\quad&\LL(1,2)\otimes_{\LL_1}\LL(2,1)&\rTo&\NWpbk\LL_2\end{diagram}

Since (in virtue of Theorem \ref{mainthm}(ii) and Lemma \ref{contractible}) the left vertical maps of both squares are weak equivalences between weakly contractible, cofibrant objects of $\Vv$, and (in virtue of Theorem \ref{mainthm}(iii)) the upper horizontal maps are cofibrations in $\Vv$, the right vertical maps $\HH_0\to\LL_0$ and $\KK_1\to\LL_2$ are weak equivalences as well, and hence $\LL_0$ and $\LL_2$ are weakly contractible monoids as required.\qed

\section*{Acknowledgments}

We are grateful to Boris Chorny, Fernando Muro and Giovanni Caviglia for helpful discussions and useful comments. The first author benefited from support from the French National Agency for Research (ANR grants HODAG and HOGT).

\vspace{1ex}

\noindent{\small\sc Universit\'e de Nice, Lab. J.-A. Dieudonn\'e,
Parc Valrose, 06108 Nice, France.}\hspace{2em}\emph{E-mail:}
cberger$@$math.unice.fr\vspace{1ex}

\noindent{\small\sc Radboud Universiteit Nijmegen, Institute for Mathematics, Astrophysics, and Particle Physics, Heyendaalseweg 135, 6525 AJ Nijmegen, The Netherlands.}\hspace{2em}\emph{E-mail:}
i.moerdijk$@$math.ru.nl


\begin{thebibliography}{99}

\bibitem{BB}M. A. Batanin and C. Berger -- \emph{On the homotopy theory of algebras over polynomial monads}, in preparation.
\bibitem{BM0}C. Berger and I. Moerdijk -- \emph{Axiomatic homotopy theory for operads}, Comment. Math. Helv. \textbf{78} (2003), 805--831.
\bibitem{BM1}C. Berger and I. Moerdijk -- \emph{The Boardman-Vogt resolution of operads in monoidal model categories}, Topology \textbf{45} (2006), 807--849.
\bibitem{BM2}C. Berger and I. Moerdijk -- \emph{Resolution of coloured operads and rectification of homotopy algebras}, Contemp. Math. \textbf{431} (2007), 31--58.
\bibitem{Be1}J. E. Bergner -- \emph{A model category structure on the category of simplicial categories}, Trans. Amer. Math. Soc. \textbf{359} (2007), 2043--2058.
\bibitem{BV}J. M. Boardman and R. M. Vogt -- \emph{Homotopy invariant algebraic structures on topological spaces}, Lect. Notes Math. \textbf{347} (1973).
\bibitem{DI}D. Dugger and D. C. Isaksen -- \emph{Topological hypercovers and $\mathbb{A}^1$-realizations}, Math. Z. \textbf{246} (2004), 667--689.
\bibitem{DK}W. G. Dwyer and D. M. Kan -- \emph{Function complexes in homotopical algebra}, Topology \textbf{19} (1980), 427--440.
\bibitem{DK2}W. G. Dwyer and D. M. Kan -- \emph{Simplicial localizations of categories}, J. Pure Appl. Alg. \textbf{17} (1980), 267--284.
\bibitem{Hi}P. Hirschhorn -- \emph{Model Categories and Their Localizations}, Math. Surveys Monogr., vol. \textbf{99}, Amer. Math. Soc. 2003.
\bibitem{Ho}M. Hovey -- \emph{Model Categories}, Math. Surveys Monogr., vol. \textbf{63}, Amer. Math. Soc., 1999.
\bibitem{JT}A. Joyal, M. Tierney -- \emph{Strong stacks and classifying spaces}, Lect. Notes Math. \textbf{1488} (1991), 213--236.
\bibitem{La}S. Lack -- \emph{A Quillen model structure for bicategories}, K-Theory \textbf{33} (2004), 185--197.
\bibitem{La2}S. Lack -- \emph{A Quillen model structure for Gray-categories}, J. K-theory \textbf{8} (2011), 183-221.
\bibitem{Lu}J. Lurie -- \emph{Higher Topos Theory}, Annals of Mathematics Studies \textbf{170}, Princeton University Press 2009.
%\bibitem{MW}I. Moerdijk, I. Weiss  -- \emph{Dendroidal sets}, Algebr. Geom. Topol. \textbf{7} (2007), 1441--1470.
\bibitem{Mu}F. Muro -- \emph{Homotopy theory of nonsymmetric operads}, Algebr. Geom. Topol. \textbf{11} (2011), 1541--1599.
\bibitem{Qu1}D. G. Quillen -- \emph{Homotopical Algebra}, Lecture Notes in Math., vol. \textbf{43}, Springer Verlag, 1967.
\bibitem{Ra}G. Raptis -- \emph{On a conjecture by J.H. Smith}, Theory Appl. Categ. \textbf{24} (2010), 114--116.
\bibitem{Re0}C. Rezk -- \emph{A model category for categories}, available at www.math.uiuc.edu/$\sim$rezk
\bibitem{Ro}J. Rosicky -- \emph{On combinatorial model categories}, Appl. Categ. Structures \textbf{17} (2009), 303--316.
\bibitem{SS}S. Schwede and B. Shipley -- \emph{Algebras and modules in monoidal model categories}, Proc. London Math. Soc. \textbf{80} (2000), 491--511.
\bibitem{SS2}S. Schwede and B. Shipley -- \emph{Equivalences of monoidal model categories}, Algebr. Geom. Topol. \textbf{3} (2003), 287--334.
\bibitem{Ta}G. Tabuada -- \emph{Une structure de cat\'egorie de mod\`eles de Quillen sur la cat\'egorie des dg-cat\'egories}, C. R. Math. Acad. Sci. Paris \textbf{340} (2005), no. 1, 15--19.
\bibitem{Ta2}G. Tabuada -- \emph{Homotopy theory of spectral categories}, Adv. Math. \textbf{221} (2009), 1122--1143.
\bibitem{Vogt}R. M. Vogt -- \emph{A note on homotopy equivalences}, Proc. Amer. Math. Soc. \textbf{32} (1972), 627--629.
\bibitem{VogtHELP}R. M. Vogt -- \emph{The HELP-lemma and its converse in Quillen model categories}, J. Homotopy Relat. Struct. \textbf{6} (2011), 115--118.
\end{thebibliography}
\end{document}